\numberwithin{equation}{section}
\providecommand{\keywords}[1]
{
  \small	
  \textbf{\textit{Keywords---}} #1
}
\theoremstyle{plain}
\newtheorem{theorem}{Theorem}[section]
\newtheorem{lemma}[theorem]{Lemma}
\newtheorem{prop}[theorem]{Proposition} 
\newtheorem{cor}[theorem]{Corollary}
\newtheorem*{conj*}{Conjecture}
\newtheorem*{theorem*}{Theorem}
\theoremstyle{definition}
\newtheorem{remark}[theorem]{Remark} 
\newtheorem{example}[theorem]{Example}
\newtheorem{obs}[theorem]{Observation}
\newcommand{\F}{\mathbb{F}}
\newcommand{\SL}{{\rm SL}}
\DeclareMathOperator{\Res}{Res}
\newcommand{\sgn}{{\mathrm{sgn}}}
\newcommand{\Fp}{{\mathbb{F}_p}}
\newcommand{\Fpm}{{\mathbb{F}^{\times}_p}}
\newcommand{\Mkp}{{\mathcal{M}_{\kappa}(p)}}
\newcommand{\Mzp}{{\mathcal{M}(p)}}
\newcommand{\Gkp}{{\mathcal{G}_{\kappa}(p)}}
\newcommand{\Gzp}{{\mathcal{G}(p)}}
\newcommand{\Knkpall}{{\mathcal{K}^{\mathrm{all}}_{n,\kappa}(p)}}
\newcommand{\Knkpdist}{{\mathcal{K}^{\mathrm{dist}}_{n,\kappa}(p)}}
\newcommand{\Knkpprop}{{\mathcal{K}^{\mathrm{prop}}_{n,\kappa}(p)}}
\newcommand{\Tnkpall}
{{T^{\mathrm{all}}_{n,\kappa}(p)}}
\newcommand{\Tnkpdist}
{{T^{\mathrm{dist}}_{n,\kappa}(p)}}
\newcommand{\Inkp}{{I_{n,\kappa}(p)}}
\title{
Topological properties of generalized Markoff mod $p$ graphs
}
\author[S. Satake]{Shohei Satake}
\address{Shohei Satake \\ Research and Education Institute for Semiconductors and Informatics, Kumamoto University, 
Kumamoto, 
Japan}
\email{shohei-satake@kumamoto-u.ac.jp}
\author[Y. Yamasaki]{Yoshinori Yamasaki}
\address{Yoshinori Yamasaki \\ Graduate School of Science and Engineering, Ehime University, Matsuyama, Japan}
\email{yamasaki@math.sci.ehime-u.ac.jp}
\subjclass[2020]{Primary 11D25; Secondary 05C10, 13P15.}
\keywords{Generalized Markoff mod $p$ graphs, non-planarity, embeddability, subdivisions, Chebyshev polynomials, resultants}
\begin{document}

\maketitle
\allowdisplaybreaks[1]

\begin{abstract} 
The generalized Markoff mod $p$ graph is defined via the equation $x^2+y^2+z^2=xyz+\kappa$ over the finite field $\mathbb{F}_p$ of prime order $p$. 
In this paper, we investigate the topological properties of the graph such as non-planarity, surface embeddability, and the existence of short cycles.
Our approach is based on a systematic construction of $K_{3,3}$-subdivisions, integrating techniques from graph theory, computer algebra, and number theory.

\end{abstract}


\section{Introduction}

The Diophantine equation
\begin{equation}
\label{def:Markoff-equation}
x^2+y^2+z^2=xyz   
\end{equation}
is called the {\it Markoff equation},
and a solution $(x,y,z)$ is called a {\it Markoff triple}.
For a prime $p>3$, 
let $\Mzp$ denote the set of all Markoff triples over $\Fp$, the finite field with $p$ elements.
The {\it Markoff mod $p$ graph} $\Gzp$ is a finite $3$-regular graph (possibly  with loops) whose vertex set is $\Mzp \setminus \{(0,0,0)\}$, and two triples are adjacent if and only if they are transformed into each other by one of the following {\it Vieta involutions}:
\[
R_1(x,y,z)=(yz-x,y,z), \quad 
R_2(x,y,z)=(x,zx-y,z), \quad 
R_3(x,y,z)=(x,y,xy-z). 
\]

The graph $\Gzp$ has been extensively  studied in the literature on ``strong approximation'' for the Markoff equation (\ref{def:Markoff-equation}). 
Indeed, Bourgain, Gamburd, and Sarnak~\cite{BGS2016} conjectured that $\Gzp$ is connected for all primes $p>3$. 
Recently Chen~\cite{C2024} made a breakthrough toward this conjecture by proving that $\Gzp$ is connected for any sufficiently large prime $p$.
A key result here is that the size of any component of $\Gzp$ must be divisible by $p$. 
Furthermore, for any $\varepsilon>0$, there are only $O(p^{\varepsilon})$ vertices that are not contained in the giant component (\cite{BGS2016}) while $\Gzp$ has approximately $p^2$ vertices (see (\ref{eq-number}) below).
Martin~\cite{Martin2025} provided an elementary proof of Chen's result concerning the  divisibility of the component sizes of $\Gzp$.
In addition, Eddy, Fuchs, Litman, Martin, and Tripeny~\cite{EFLMT2025} established an explicit lower bound for primes $p$ for which the connectivity of $\Gzp$ is guaranteed, namely, $p>3.448\times10^{392}$.
There are several algorithmic results towards the connectivity of $\Gzp$, and it was verified in \cite{B2025} that $\Gzp$ is connected for $p<10^6$.

On the other hand, de Courcy-Ireland~\cite{C2024} established the non-planarity of $\Gzp$: $\Gzp$ is planar if and only if $p=7$. This provides (indirect but positive) evidence for another well-known conjecture by Bourgain, Gamburd, and Sarnak~\cite{BGS2016, BGS2026} asserting that $\{\Gzp\}_{\text{$p$:prime}}$ is in fact an expander family, which would imply higher connectivity for $\Gzp$ (for further details on expander family, see e.g. \cite{HLW2004, L2012}).
The proof of non-planarity consists of two parts: an asymptotic evaluation of the 
Euler characteristic
of $\Gzp$ and explicit constructions of subdivisions of the complete bipartite graph $K_{3,3}$. 
The Kuratowski-Wagner theorem shows a complete characterization for planar graphs in terms of graph minors, that is, a graph is planar if and only if it contains neither $K_{3,3}$ nor the complete graph $K_5$ as a graph minor.
Since $\Gzp$ is a $3$-regular graph, $K_{3,3}$ is the unique obstruction to planarity of $\Gzp$ in the sense that 
a $3$-regular graph cannot contain a subdivision of $K_5$, and more generally, any $3$-regular graph containing $K_5$ as a minor must also contain a subdivision of $K_{3,3}$.   

There are several generalizations of \eqref{def:Markoff-equation}, such as the following equation: for a (rational) integer $\kappa$,
\begin{equation}
\label{def:Markoff-type equation}
x^2+y^2+z^2=xyz+\kappa.   
\end{equation}
Throughout this paper, we naturally treat $\kappa$ as an element of $\Fp$ when considering the solutions of \eqref{def:Markoff-type equation} modulo $p$.
For the set of all such solutions over $\Fp$, 
denoted by $\Mkp$, 
Carlitz \cite{C1954} proved that 
\begin{align}
\label{eq-number}
    \# \Mkp
=p^2+\left(3+\left(\frac{\kappa}{p}\right)\right)\left(\frac{\kappa-4}{p}\right)p+1,
\end{align}
where $(\frac{\cdot}{p})$ is the Legendre symbol.
The {\it generalized Markoff mod $p$ graph} $\Gkp$ (with respect to \eqref{def:Markoff-type equation}) is defined as a $3$-regular graph
whose vertex set is $\Mkp$ 
with the adjacency relation defined in the same manner as for $\Gzp$ (hence $\mathcal{G}_0(p)$ coincides with $\Gzp \sqcup \{(0,0,0)\}$, where $(0,0,0)$ is an isolated vertex).
The graph structure of $\Gkp$ depends sensitively on $\kappa$ and $p$ in general.
For example, in the case of $\kappa=4$ (corresponding to a degenerate surface called the {\it Cayley cubic}, cf. \cite{CL2020}), the graph $\mathcal{G}_4(p)$ possesses many small components.
While $\Gkp$ is no longer connected in general,
it is still expected that for $\kappa\neq 4$,
there exists a unique {\it giant component} $\mathcal{C}_\kappa(p)$ in  
$\Gkp$ such that $\#(\Gkp \setminus \mathcal{C}_\kappa(p))=o_\kappa(p^2)$. 
In a recent preprint~\cite{Mart2025}, Martin announced that this in fact holds for a majority of primes.
Moreover, the main result of \cite{Mart2025} would imply that $\#(\Gkp \setminus \mathcal{C}_\kappa(p))=O(1)$, representing a significant advance in the understanding of the global structure of $\Gkp$.
However, 
the precise structure of $\Gkp$ (and $\mathcal{C}_\kappa(p)$) is still largely unexplored.

In this paper, we investigate the topological properties of the generalized Markoff mod $p$ graph $\Gkp$ by combining perspectives of graph theory, computer algebra, and number theory.
In particular, by leveraging computational algebraic methods including resultants and Gr\"obner bases, in conjunction with the Chebotarev density theorem, we derive the following results based on graph minor theory.
Throughout this paper, the term {\it $K_{3,3}$-subdivision} refers to a subdivision of the complete bipartite graph $K_{3,3}$. 


\begin{theorem}
\label{thm:main}
Let $\kappa \in\mathbb{Z}\setminus \{4\}$.
Then the followings hold:
\begin{enumerate}
\item[$(1)$]
$(\mathrm{Theorems}~\ref{thm-n=1},\ \ref{thm:n=2 non-planarity} \  \text{and}\ \ref{thm-n=(p-1)/2})$ 
For each such $\kappa$ and for infinitely many primes $p$ (depending on $\kappa$) of natural density at least $1/2$, 
there exists an explicit $K_{3,3}$-subdivision in $\Gkp$.

\item[$(2)$] 
$(\mathrm{Theorem}~\ref{thm:np density})$
Moreover, for almost all $\kappa$, 
the natural density of primes $p$ for which $(1)$ holds is at least $13/16$.

    
\end{enumerate}
\end{theorem}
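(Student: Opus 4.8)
The plan is to prove the two parts by assembling three explicit families of $K_{3,3}$-subdivisions, indexed by a length parameter $n \in \{1, 2, (p-1)/2\}$, and then converting each existence statement into an assertion about the density of admissible primes via the Chebotarev density theorem. First I would fix $\kappa \in \mathbb{Z} \setminus \{4\}$ and a base vertex of $\Gkp$, and trace out walks obtained by iterating a fixed product of two Vieta involutions (say $R_1 R_3$), which fixes one coordinate and acts on the remaining two coordinates by an element of $\mathrm{SL}_2$; consequently the coordinates of the $n$-th iterate are expressed through Chebyshev polynomials evaluated at a polynomial in the fixed coordinate. To assemble a $K_{3,3}$-subdivision I would select six branch vertices — three drawn from one such Chebyshev orbit and three from a complementary one — and impose that the connecting walks meet only at their prescribed endpoints and realize the bipartite adjacency pattern of $K_{3,3}$. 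Each such requirement is a polynomial equation in the field coordinates and in $\kappa$.

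Next I would eliminate the auxiliary field variables from this system using resultants and Gr\"obner bases, reducing the existence of the length-$n$ subdivision to the solvability modulo $p$ of a single polynomial condition $F_{n,\kappa}(t) \equiv 0$, together with non-degeneracy conditions (distinctness of the branch vertices, internal disjointness of the paths) that exclude only finitely many primes dividing certain resultants and discriminants. By the Chebotarev density theorem applied to the splitting field of $F_{n,\kappa}$ over $\mathbb{Q}$, the set of primes $p$ for which $F_{n,\kappa}$ has a root in $\Fp$ has a natural density equal to the proportion of Galois-group elements that fix at least one root under the permutation action on the roots. For the short cases $n=1,2$ the polynomial $F_{n,\kappa}$ has low degree and its Galois group can be pinned down explicitly; for $n=(p-1)/2$ the construction instead exploits the period of the Chebyshev recurrence modulo $p$ (which divides $p \pm 1$), so the admissible primes are cut out by a quadratic-residue/congruence condition of density $1/2$ that is, crucially, uniform in $\kappa$.

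For part $(1)$ I would show that this last, period-based construction alone already supplies a set of admissible primes of density at least $1/2$ for \emph{every} $\kappa \neq 4$; this $\kappa$-independent source is what makes the bound uniform. For part $(2)$ I would compute, for the two short constructions, the generic Galois group of $F_{n,\kappa}$ as $\kappa$ varies over $\mathbb{Z}$, and argue that for all but a thin (hence density-zero, ``almost all'') set of exceptional $\kappa$ this group is as large as possible, so that each short construction contributes density $1/2$ with a controlled correlation to the others. Combining the three density contributions by inclusion--exclusion, on top of the uniform $1/2$ from the long construction, then yields the improved bound $13/16$.

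The hard part will be precisely this last step: controlling the Galois groups of the elimination polynomials $F_{n,\kappa}$ uniformly in $\kappa$, certifying that the generic (large) group is attained outside an explicit finite set of exceptional $\kappa$ rather than merely for each $\kappa$ individually, and tracking the correlations among the three quadratic-residue-type conditions accurately enough to certify the constant $13/16$ rather than a weaker bound. Separating the degenerate primes that divide the relevant resultants and discriminants, and verifying that the prescribed walks genuinely form an \emph{internally disjoint} $K_{3,3}$-subdivision rather than a smaller or self-intersecting configuration, is a secondary but necessary piece of bookkeeping.
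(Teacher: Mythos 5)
Your overall strategy is the same as the paper's: branch sextuples joined by paths that alternate two Vieta involutions, whose iterates are governed by Chebyshev polynomials of the fixed coordinate; elimination by resultants and a shape (Gr\"obner) basis down to a single polynomial condition $B_{n,\kappa}(y)=0$ plus finitely many degenerate primes dividing auxiliary resultants; and Chebotarev plus inclusion--exclusion for the densities. Your choice to base part $(1)$ on the $n=(p-1)/2$ construction (condition $\left(\frac{\kappa-4}{p}\right)=1$) rather than the paper's $n=1$ construction (condition $\left(\frac{4\kappa-7}{p}\right)=1$) is a harmless variant: either single quadratic condition has density at least $1/2$ for every $\kappa\ne 4$, since a nonzero integer that is not a perfect square is a residue for half the primes and a perfect square is a residue for almost all of them.

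There is, however, a genuine gap in your accounting for part $(2)$. You assert that ``each short construction contributes density $1/2$,'' and that combining two such contributions with the uniform $1/2$ from the long construction yields $13/16$; but three independent events of density $1/2$ would give $1-(1/2)^3=7/8$, not $13/16$, so your numbers cannot reproduce the claimed constant. The correct bookkeeping uses events of densities $1/2$, $1/4$, $1/2$: for $n=2$ one may only use the sub-event $\left(\frac{\eta_{2,\kappa}}{p}\right)=-1$ and $\left(\frac{5}{p}\right)=1$ (where the quartic $B_{2,\kappa}$ acquires exactly two roots), of density $1/4$. The complementary sub-event, in which $B_{2,\kappa}$ splits completely, is cut out by the additional symbols $\left(\frac{8\kappa-17\pm 2\sqrt{\eta_{2,\kappa}}}{p}\right)$, which live in an extension that is not generated by square roots of rational numbers; its correlation with the other two quadratic conditions cannot be controlled, and the paper explicitly discards it for exactly this reason. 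With the three usable conditions all defined by Legendre symbols of the integers $4\kappa-7$, $\eta_{2,\kappa}$, $5$, $\kappa-4$, independence follows from Chebotarev applied to the multiquadratic field of degree $16$ (this is where the ``none is a square and no ratio is a square'' hypothesis enters, and where the density-zero exceptional set of $\kappa$ is quantified by counting integer points on the relevant Pell-type curves), and inclusion--exclusion gives $\tfrac12+\tfrac14+\tfrac12-\tfrac18-\tfrac18-\tfrac14+\tfrac1{16}=\tfrac{13}{16}$. So your plan of computing the full Galois group of the $n=2$ elimination polynomial and extracting a density-$1/2$ contribution from it must be replaced by restricting to the abelian (purely quadratic-character) part of that event.
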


Theorem~\ref{thm:main} yields several topological properties of $\Gkp$.
In particular, the following corollary is an immediate consequence of Theorem~\ref{thm:main}.

\begin{cor} 
\label{cor:direct}
Under the same assumptions on $\kappa$ and $p$ of Theorem~\ref{thm:main} $(1)$,
the followings hold:
\begin{enumerate}
\item[$(1)$] 
$(\mathrm{Corollary}~\ref{cor:sufficient condition non-planarity})$
$\Gkp$ is non-planar. 
\item[$(2)$]
$(\mathrm{Corollary}~\ref{cor:cycles})$
$\Gkp$ has cycles (i.e. closed paths with no repeated vertices or edges) of lengths $6,9,10,15$, and $18$, 
all of which can be constructed explicitly.  
\end{enumerate}
\end{cor}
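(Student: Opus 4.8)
The plan is to read off both conclusions directly from the explicit $K_{3,3}$-subdivisions supplied by Theorem~\ref{thm:main}(1), that is, from the constructions in Theorems~\ref{thm-n=1}, \ref{thm:n=2 non-planarity}, and~\ref{thm-n=(p-1)/2}. For~(1), I would simply invoke Kuratowski's theorem: a finite graph is non-planar whenever it contains a subdivision of $K_{3,3}$. Since Theorem~\ref{thm:main}(1) exhibits such a subdivision inside $\Gkp$ under the stated hypotheses on $\kappa$ and $p$, the non-planarity is immediate, and no further computation is required.

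For~(2), the key observation is that a $K_{3,3}$-subdivision automatically carries a whole family of cycles whose lengths are governed by the lengths of the subdividing paths. Denoting the two color classes of $K_{3,3}$ by $\{A_1,A_2,A_3\}$ and $\{B_1,B_2,B_3\}$ and writing $\ell_{ij}$ for the length (number of edges) of the path in $\Gkp$ realizing the abstract edge $A_iB_j$, every cycle of the abstract $K_{3,3}$ lifts to a cycle of $\Gkp$ whose length is the sum of the relevant $\ell_{ij}$. As $K_{3,3}$ possesses both $4$-cycles (four edges) and $6$-cycles (six edges), and the three constructions record all nine path lengths explicitly, the target lengths $6,9,10,15,18$ can be realized by a finite case analysis: for each target I would select a suitable $4$- or $6$-cycle in one of the constructions and check that its constituent path lengths sum to the desired value. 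Here the short targets will come from the small-parameter constructions ($n=1,2$), whose paths are short, rather than from the $n=(p-1)/2$ construction. Each resulting cycle is explicit precisely because the underlying subdivision is, and it has no repeated vertices or edges because distinct paths of a subdivision are internally disjoint.

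The main obstacle lies in producing the odd lengths $9$ and $15$. Since $K_{3,3}$ is bipartite, every one of its cycles has even length, so a lifted cycle of $\Gkp$ is odd if and only if it traverses an odd number of odd-length paths $\ell_{ij}$. Hence these two lengths force the relevant subdivision to contain paths of both parities, arranged so that some $4$- or $6$-cycle accumulates an odd total; confirming that the constructions indeed furnish such mixed-parity configurations is the delicate point. By contrast, the even lengths $6,10,18$ follow routinely once the path-length data are tabulated, and all five verifications then reduce to short arithmetic checks on the recorded $\ell_{ij}$.
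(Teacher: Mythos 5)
Your proposal is correct and follows essentially the same route as the paper: part (1) is Kuratowski applied to the $K_{3,3}$-subdivision (the paper's Corollary~\ref{cor:sufficient condition non-planarity}), and part (2) lifts a $4$-cycle and two $6$-cycles of the abstract $K_{3,3}$ through the subdivision, whose nine paths have lengths $1$ (the three $X_i$-$Y_i$ edges) and $2n$ (the six $X_i$-$Y_j$ paths), yielding lengths $4n+2$, $6n+3$, $8n+2$ and hence $\{6,9,10\}\cup\{10,15,18\}$ for $n=1,2$. The "delicate" odd-length issue you flag is automatic here, since every lifted $6$-cycle of the form $X_1Y_1X_2Y_2X_3Y_3$ uses exactly the three odd (length-$1$) paths.
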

Note that it would be interesting to obtain results analogous to Theorem~\ref{thm:main} and Corollary~\ref{cor:direct} for other generalizations of the Markoff mod $p$ graph $\Gzp$ (e.g. \cite{CLM2025}); these will be discussed in our subsequent works.

Recall that $\Gkp$ admits the following non-trivial graph automorphisms, known as {\it double sign changes}:
\[
(x,y,z) \mapsto (x,-y,-z), \quad 
(x,y,z) \mapsto (-x,y,-z), \quad \text{and} \quad 
(x,y,z) \mapsto (-x,-y,z).
\]
These automorphisms, together with the identity, form a group isomorphic to $(\mathbb{Z}/2\mathbb{Z})^2$.
Combining this symmetry of $\Gkp$ 
with Theorem~\ref{thm:main},
we obtain an explicit construction of mutually vertex-disjoint $K_{3,3}$-subdivisions.


\begin{theorem}[Theorem~\ref{thm-disjoint-k}]
\label{thm:disjoint-intro}
For each $\kappa \in \mathbb{Z} \setminus \{ 4\}$, $\Gkp$ contains at least four mutually vertex-disjoint $K_{3,3}$-subdivisions for infinitely many primes $p$ (depending on $\kappa$) of natural density at least $1/4$ if $\kappa=2$ and $1/2$ otherwise.
Moreover, for almost all $\kappa$, the natural density of primes $p$ for which this holds is at least $5/8$.
\end{theorem}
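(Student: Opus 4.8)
The plan is to exploit the Klein four-group $V=\{\mathrm{id},s_1,s_2,s_3\}$ of double sign changes, where $s_1(x,y,z)=(x,-y,-z)$, $s_2(x,y,z)=(-x,y,-z)$, and $s_3(x,y,z)=(-x,-y,z)$, acting as automorphisms of $\Gkp$. Let $S$ denote one of the explicit $K_{3,3}$-subdivisions produced by Theorem~\ref{thm:main}~(1), valid on a set of primes of natural density at least $1/2$ (and at least $13/16$ for almost all $\kappa$). Since each $s_i$ is a graph automorphism, it carries $S$ to another $K_{3,3}$-subdivision $s_i(S)$; hence $S,\,s_1(S),\,s_2(S),\,s_3(S)$ are four $K_{3,3}$-subdivisions in $\Gkp$. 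First I would observe that these four are mutually vertex-disjoint if and only if $s_i\bigl(V(S)\bigr)\cap V(S)=\emptyset$ for every non-identity $s_i$: applying $g^{-1}$ to a hypothetical common vertex of $g(S)$ and $h(S)$ reduces mutual disjointness to the single requirement that no non-identity element of $V$ maps a vertex of $S$ into $V(S)$, using that $V$ has exponent two so that $g^{-1}h=gh$ is again a non-identity element whenever $g\neq h$.

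The heart of the argument is therefore a coordinate analysis of the vertex set $V(S)$. Since each $s_i$ negates exactly two of the three coordinates, a collision $s_i(v)=w$ with $v,w\in V(S)$ forces rigid sign relations between the coordinates of two (possibly equal) vertices. Two phenomena must be excluded: (a) a vertex fixed by some $s_i$, which happens precisely when the two negated coordinates both vanish; and (b) a pair of distinct vertices interchanged by some $s_i$. Using the explicit formulas for the branch vertices and the internal path vertices furnished by Theorems~\ref{thm-n=1}, \ref{thm:n=2 non-planarity}, and \ref{thm-n=(p-1)/2}, I would show that case (b) cannot occur, since the coordinate patterns along each subdivided edge are pinned down by the Vieta relations and do not admit the required coincidences. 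Case (a) is the delicate one: I would determine for which $\kappa$ and $p$ the explicit vertices can acquire two vanishing coordinates, and then impose the congruence or quadratic-residue conditions that rule this out.

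The density bookkeeping then proceeds as follows. For $\kappa\neq 2$ the disjointness conditions are already implied by the conditions guaranteeing the existence of $S$ in Theorem~\ref{thm:main}~(1), so the four disjoint subdivisions exist on the same set of primes, of natural density at least $1/2$. For $\kappa=2$ the relevant construction produces a vertex whose potentially vanishing coordinates must be excluded by an additional quadratic-residue condition of density $1/2$ within that set, yielding at least $1/4$. For almost all $\kappa$, combining the three constructions exactly as in Theorem~\ref{thm:main}~(2) and intersecting with the disjointness conditions gives natural density at least $5/8$. Finally, positivity of the density together with the Chebotarev density theorem (precisely as in the proof of Theorem~\ref{thm:main}) furnishes the infinitude of such primes.

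The hard part will be the exhaustive coordinate check establishing $s_i\bigl(V(S)\bigr)\cap V(S)=\emptyset$: one must treat every internal path vertex of each of the three constructions and isolate exactly the degenerate configurations (vertices with two zero coordinates) that force the extra density-reducing conditions, especially in the exceptional case $\kappa=2$.
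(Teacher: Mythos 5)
Your overall framing --- act by the Klein four-group of double sign changes, reduce mutual disjointness to the single condition $\sigma(V(S))\cap V(S)=\emptyset$ for each non-identity $\sigma$, and then do a coordinate analysis --- matches the paper's strategy. But there are two concrete gaps in the execution. First, your treatment of $\kappa=2$ is based on a false premise. For $\kappa=2$ the $n=1$ construction gives $K_{(1,-1)}$ with $\alpha=1$, $\overline{\alpha}=0$, and its twelve vertices have fixed coordinates in $\{0,\pm 1\}$; one checks directly that, e.g., the internal vertex $(0,1,-1)$ is sent by $(x,y,z)\mapsto(x,-y,-z)$ to the internal vertex $(0,-1,1)$, so $K_{(1,-1)}$ meets its sign-change images for \emph{every} prime $p$ (indeed all four copies sit inside a single $16$-vertex component). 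No additional quadratic-residue condition of density $1/2$ can repair this; the paper instead abandons the $n=1$ construction entirely for $\kappa=2$ and uses the $n=2$ construction, whose existence (via condition (B) of Theorem~\ref{thm:n=2 non-planarity} with $\eta_{2,2}=-31$) already has density only $\geq 1/4$. This also shows your blanket claim that ``case (b) cannot occur'' is wrong: distinct vertices interchanged by a sign change do occur, and ruling them out in general requires nontrivial non-vanishing conditions (in the paper, resultant conditions such as $B_{n,\kappa}(0)\neq 0$ and $\lambda_{n,\kappa}\neq 0$, which exclude finitely many primes for each fixed $\kappa\neq 4$).

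Second, your density bookkeeping is internally inconsistent. You propose to run the disjointness check for all three constructions, including $n=(p-1)/2$, and you claim the disjointness conditions cost nothing for $\kappa\neq 2$; but then inclusion--exclusion over all three would give $13/16$, not $5/8$. The reason the correct bound is $5/8$ is that the $n=(p-1)/2$ construction is \emph{not} used for disjointness (the paths there have length $p-1$ and the disjointness machinery, which requires $2n+1\not\equiv 0 \pmod p$, does not apply; the paper even notes elsewhere that this construction fails to produce $2K_{3,3}$-subdivisions for $p=13,17$, $\kappa=0$). Dropping that term from the inclusion--exclusion of Theorem~\ref{thm:np density} yields exactly $Q(\eta_{1,\kappa};1)+Q(\eta_{2,\kappa},5;-1,1)-Q(\eta_{1,\kappa},\eta_{2,\kappa},5;1,-1,1)=\tfrac12+\tfrac14-\tfrac18=\tfrac58$. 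Finally, a practical point: for the $n=1,2$ cases your ``exhaustive coordinate check'' is feasible, but the paper's cleaner route is to observe that any collision on an $X_i$-$Y_j$-path forces a collision involving a branch vertex (since the paths are orbits under alternating words in $R_i,R_j$ and $\sigma$ commutes with the $R_i$), reducing everything to finitely many equations handled by Lemma~\ref{lem-endvertex}; you will want some such reduction rather than a vertex-by-vertex check.
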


Theorem~\ref{thm:disjoint-intro} strengthens Corollary~\ref{cor:direct} (1). Consider an embedding of $\Gkp$ into a closed surface (i.e. a compact topological surface without boundary); see e.g.~\cite{GT1987, MT2001} for the details on surface embeddings of graphs.
Recall that the Euler characteristic $\chi$ of $\Gkp$ is defined by $V-E+F$, where $V, E$, and $F$
denote the numbers of vertices, edges and faces, respectively.
Combining \cite[Theorem~1.1]{Mart2025} with Theorem~\ref{thm:disjoint-intro}, 
it follows that $\chi\leq -6$
for $\kappa$ and $p$ as in Theorem~\ref{thm:disjoint-intro},
whereas Euler's formula requires  $\chi=2$ if $\Gkp$ is planar.
Moreover, the existence of mutually vertex-disjoint $K_{3,3}$-subdivisions highlights the robustness of the non-planarity of $\Gkp$ in the sense that, by Theorem~\ref{thm:disjoint-intro}, one must remove at least four vertices to make the graph planar.
It is worth noting that, in light of \cite[Theorem~1.1]{Mart2025}, our construction of $K_{3,3}$-subdivisions implies that the Euler characteristic of the giant component $\mathcal{C}_\kappa(p)$ is at most $-6$ for any 
$\kappa\neq 4$
and infinitely many primes $p$ (see Section~\ref{sect:disjoint} for details).
From these structural perspectives, our results 
serve as heuristic evidence for the expander property of the giant component $\mathcal{C}_\kappa(p)$.
At least, 
$\mathcal{C}_\kappa(p)$ in fact possesses certain complex structure similar to that of $\Gzp$.

It is also worth mentioning that in cryptography and number theory, studying the structure of $\Gkp$ is crucial for understanding the security of the associated cryptographic hash functions proposed in \cite{FLLT2021, Sil2025}, which are expected to be expander hash functions~\cite{CLG2009}.
In particular, it is a challenging open problem to investigate the existence and distribution of short cycles in $\Gkp$, as these properties directly affect the collision resistance of the hash functions (\cite{D2023, FLLT2021}).
Corollary~\ref{cor:direct} (2) provides a modest step toward addressing this problem.

In addition, for $\kappa=0$, 
we give a complete characterization for primes $p$ for which the graph $\Gzp$ admits an embedding into closed surfaces of minimal topological complexity beyond the plane. 

\begin{theorem}[Theorem~\ref{thm-proj}]
\label{thm:embedded-intro}
Let $p>3$ be a prime. Then the graph $\Gzp$ admits an embedding into the torus 
(i.e. the orientable closed surface of Euler characteristic zero) 
or the (real) projective plane 
(i.e. the non-orientable closed surface of Euler characteristic one) 
if and only if $p=7$. 
\end{theorem}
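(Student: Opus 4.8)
The plan is to prove the forward implication directly and the reverse implication by exhibiting a genus obstruction. For the ``if'' part, when $p=7$ the graph $\Gzp$ is planar by the characterization of de Courcy-Ireland \cite{C2024} ($\Gzp$ is planar iff $p=7$), so it embeds in the sphere and hence in every closed surface, in particular in the torus and in the projective plane. For the ``only if'' part I would establish the following sharpening of non-planarity: for every prime $p>3$ with $p\neq 7$, the graph $\Gzp$ contains two \emph{vertex-disjoint} $K_{3,3}$-subdivisions.

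Granting this, the conclusion follows from additivity of genus. If $H_1,H_2$ are vertex-disjoint $K_{3,3}$-subdivisions in $\Gzp$, then their union $H_1\sqcup H_2$ is a subgraph of $\Gzp$ isomorphic to the abstract disjoint union. Since both the orientable genus $\gamma$ and the Euler genus $\bar\gamma$ are monotone under subgraphs and additive over disjoint unions (Battle--Harary--Kodama--Youngs for $\gamma$, Stahl--Beineke for $\bar\gamma$), and since $\gamma(K_{3,3})=\bar\gamma(K_{3,3})=1$ (a subdivision preserves both genera), we obtain $\gamma(\Gzp)\geq \gamma(H_1)+\gamma(H_2)=2$ and likewise $\bar\gamma(\Gzp)\geq 2$. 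As the torus has orientable genus $1$ and the projective plane has Euler genus $1$, neither surface can receive an embedding of $\Gzp$. It is essential that there be \emph{two} disjoint copies: a single $K_{3,3}$-subdivision already embeds in both surfaces, so only the additivity across two disjoint copies forces genus at least $2$. Note that this argument is insensitive to whether $\Gzp$ is connected, since it rests only on subgraph-monotonicity.

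It remains to construct the two disjoint subdivisions for all $p\neq 7$, and this is where the main difficulty lies. A single $K_{3,3}$-subdivision $H$ exists for every such $p$ by non-planarity; applying a nontrivial double sign change $\sigma$ (which is fixed-point-free on the vertex set for odd $p$, as its fixed locus consists of triples with two zero coordinates, none of which are vertices) yields a second subdivision $\sigma(H)$, and $H,\sigma(H)$ are vertex-disjoint precisely when $V(H)$ contains no $\sigma$-orbit $\{v,\sigma(v)\}$. This is exactly the mechanism producing four disjoint copies in Theorem~\ref{thm:disjoint-intro} on a density-$\geq 1/2$ set of primes; since only two copies are now needed, I expect the density hypothesis to relax, using the explicit vertex supports of the subdivisions from Theorems~\ref{thm-n=1},~\ref{thm:n=2 non-planarity}, and~\ref{thm-n=(p-1)/2} to certify disjointness and checking the finitely many small primes by direct computation. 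The genuine obstacle is this \emph{uniformity over all} $p\neq 7$: guaranteeing, for every prime, some family member whose support avoids its own $\sigma$-image. For any primes resisting this, I would fall back on the Euler-characteristic estimate, namely that a minimum-genus (hence $2$-cell) embedding in a surface of Euler characteristic $\chi$ satisfies $\chi\le V-E+2E/g=V(6-g)/(2g)$ with $E=\tfrac{3}{2}V$, so once the girth satisfies $g\ge 6$ this gives $\chi\le 0$ and rules out the projective plane outright; the borderline toroidal case $\chi=0$ would then force a perfectly hexagonal $2$-cell embedding, to be excluded either by a face-counting/parity argument or again by the disjoint-subdivision bound.
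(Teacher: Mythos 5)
Your reduction of the obstruction to genus additivity is sound and is essentially equivalent to what the paper does: the paper invokes the fact that $2K_{3,3}$ is a forbidden minor for both the torus and the projective plane \cite{A1981, MW2018}, which is the same content as your Battle--Harary--Kodama--Youngs / Stahl--Beineke argument. The ``if'' direction is also identical. But the ``only if'' direction has a genuine gap, and it is concentrated exactly where you flag the difficulty. First, your central claim --- that $\Gzp$ contains two vertex-disjoint $K_{3,3}$-subdivisions for \emph{every} prime $p\neq 7$ --- is not only unproved but apparently false at $p=5$: the paper reports that $\mathcal{G}(5)$ seems to contain no $2K_{3,3}$-subdivision at all, and instead exhibits a subdivision of a \emph{different} forbidden minor from the torus/projective-plane obstruction lists. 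Second, even away from $p=5$, the density results you invoke (Theorem~\ref{thm-disjoint-k} and the constructions behind it) only certify disjoint copies on a set of primes of density $\geq 1/2$ (or $5/8$), leaving infinitely many primes uncovered; ``checking the finitely many small primes by direct computation'' presupposes that the uncovered set is finite, which nothing in your argument establishes.

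Your fallback does not close this. The girth of $\Gzp$ is $4$ whenever $-1$ is a quadratic residue (the graph contains squares for $p\equiv 1\pmod 4$, which includes $p=5,13,17$), so the bound $\chi\le V(6-g)/(2g)$ with $g\ge 6$ is unavailable for half of all primes; and even where $g=6$ it only yields $\chi\le 0$, which never excludes the torus. The paper's actual mechanism is different and is the step you are missing: it combines de Courcy-Ireland's refined Euler-characteristic inequality (which uses the exact counts of squares and hexagons, not just the girth) with the lower bound $V\ge \tfrac12 p\,\phi(p+1)$ on the size of the giant component to force $p\le 62440$ for any embedding with $\chi\ge 0$, then uses the verified connectivity for $p<10^6$ \cite{B2025} to pin the exceptional set down to $p\in\{5,7,11,13,17\}$, and only then produces explicit $2K_{3,3}$-subdivisions for $p=11,13,17$ and the separate forbidden-minor subgraph for $p=5$. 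Without some quantitative step of this kind that reduces the problem to finitely many primes, your strategy cannot be completed as stated.
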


This result is achieved through Theorem~\ref{thm:main} by finding suitable forbidden minors for these embeddings, 
as well as the numerical verification of the connectivity of $\Gzp$ for $p<10^6$ (\cite{B2025}).
Note that the graph $\mathcal{G}(7)$ is planar, and hence one can draw it on any closed surface without edge crossings. 
Consequently, Theorem~\ref{thm:embedded-intro} immediately implies the non-planarity of $\Gzp$ for any $p\neq 7$, and hence strengthens the main result of \cite{C2024}.

The rest of this paper is organized as follows. 
Section~\ref{sect:chebyshev} introduces the basic and relevant properties of  Chebyshev polynomials, which play a key role in constructing $K_{3, 3}$-subdivisions in $\Gkp$.
In Section~\ref{sec:Six triples}, we present a systematic construction of $K_{3,3}$-subdivisions based on careful computations of Chebyshev polynomials and their associated resultants.
Briefly, we first reduce the construction to the problem of solving systems of algebraic equations via Gr\"obner bases to obtain candidates for the sextuples of triples that form the desired subdivisions.  
Then we establish sufficient conditions for the ``properness'', which guarantees the claimed topological properties, of the subdivisions corresponding to the obtained sextuples.
Based on these arguments, Section~\ref{sect:non-planarity} provides explicit $K_{3,3}$-subdivisions.
Finally, Sections~\ref{sect:disjoint} and \ref{sect:embedded} are devoted to the proofs of Theorems~\ref{thm:disjoint-intro} and \ref{thm:embedded-intro}, respectively.

\section{Chebyshev polynomials}
\label{sect:chebyshev}

In this section, 
we recall some basic properties of the Chebyshev polynomials and introduce a key polynomial $A_m(x)$ that plays a central role in our study.

The Chebyshev polynomials  $T_m(x),U_m(x)\in\mathbb{Z}[x]$ of the first and second kind, respectively, are defined by the following recurrence relations:
\begin{alignat}{3}
\label{for:recursion for T_m}
 T_0(x)&= 1,  &\quad 
 T_1(x)&= x,  &\quad 
 T_{m+1}(x) 
 &= 2xT_m(x)-T_{m-1}(x) \quad (m\ge 1), \\
\label{for:recursion for U_m}
 U_0(x)&= 1, &\quad 
 U_1(x) &= 2x, &\quad 
 U_{m+1}(x) 
 &= 2xU_m(x)-U_{m-1}(x) \quad (m\ge 1).
\end{alignat}
For convenience, we extend these definitions to negative indices by setting $T_{-1}(x)=x$, $T_{-2}(x)=2x^2-1$ and 
$U_{-1}(x)=0$, $U_{-2}(x)=-1$.
We summarize the properties used in this paper below.

\begin{lemma}
\label{lem:T_m and U_m properties} 
\begin{itemize}
\item[$(1)$]
We have 
\begin{alignat}{2}
\label{for:explicit expression of T_m}
 T_{m}(x) &= \sum^{\lfloor m/2\rfloor}_{j=0} t_{m,j}(2x)^{m-2j}, \quad & t_{m,j} &= \frac{1}{2}(-1)^j \left\{ \binom{m-j}{j} + \binom{m-j-1}{j} \right\}, \\
\label{for:explicit expression of U_m}
 U_{m}(x) &= \sum^{\lfloor m/2\rfloor}_{j=0} u_{m,j}(2x)^{m-2j}, \quad & u_{m,j} &= (-1)^j \binom{m-j}{j}.
\end{alignat}
\item[$(2)$]
$U_m(0)=(-1)^{m/2}$ if $m$ is even and $0$ otherwise.
\item[$(3)$]
$U_m(\pm 1)=(\pm 1)^{m}(m+1)$.
\item[$(4)$]
$U_{2m}(x)+1=2T_m(x)U_{m}(x)$.
\item[$(5)$]
$U_{2m}(x)-1=2T_{m+1}(x)U_{m-1}(x)$.
\item[$(6)$]
$U_{2m-1}(x)=2T_m(x)U_{m-1}(x)$.
\end{itemize}
\end{lemma}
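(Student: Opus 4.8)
The plan is to prove each of the six identities in Lemma~\ref{lem:T_m and U_m properties} using the standard trigonometric/hyperbolic parametrization of the Chebyshev polynomials, since all of these are classical facts that become transparent under the substitution $x=\cos\theta$ (or the equivalent algebraic substitution $2x=\zeta+\zeta^{-1}$). Concretely, I would first record the closed-form evaluations $T_m(\cos\theta)=\cos(m\theta)$ and $U_m(\cos\theta)=\frac{\sin((m+1)\theta)}{\sin\theta}$, which follow immediately from the defining recurrences \eqref{for:recursion for T_m} and \eqref{for:recursion for U_m} together with the angle-addition formulas for sine and cosine. Once these are in hand, parts $(2)$ and $(3)$ are direct specializations: for $(2)$ set $\theta=\pi/2$ so that $\cos\theta=0$, giving $U_m(0)=\frac{\sin((m+1)\pi/2)}{1}$, which is $0$ for odd $m$ and $(-1)^{m/2}$ for even $m$; for $(3)$ take limits as $\theta\to 0$ and $\theta\to\pi$ (L'H\^opital or the known value $U_m(1)=m+1$) to obtain $U_m(\pm1)=(\pm1)^m(m+1)$.

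For the explicit coefficient formulas $(1)$, I would work algebraically rather than trigonometrically. Writing $w=2x$, one expands the generating-function or matrix form of the recurrences; the cleanest route is to verify that the proposed right-hand sides satisfy the same recurrence \eqref{for:recursion for T_m}, \eqref{for:recursion for U_m} and initial conditions. For $U_m$, substituting the claimed $u_{m,j}=(-1)^j\binom{m-j}{j}$ into $U_{m+1}=2xU_m-U_{m-1}$ reduces to Pascal's identity $\binom{m-j}{j}+\binom{m-1-j}{j-1}=\binom{m+1-j}{j}$ after reindexing, and the base cases $U_0=1$, $U_1=2x$ are immediate. The formula for $T_m$ then follows either by the same induction or, more slickly, from the relation $T_m=\tfrac12(U_m-U_{m-2})$ (which itself comes from comparing the recurrences), splitting the single binomial into the symmetric pair $\binom{m-j}{j}+\binom{m-j-1}{j}$.

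The product identities $(4)$--$(6)$ are the heart of the lemma, and these are where the trigonometric parametrization pays off most directly. Each reduces to a product-to-sum identity for sine. For $(6)$, write $2T_m(\cos\theta)U_{m-1}(\cos\theta)=2\cos(m\theta)\cdot\frac{\sin(m\theta)}{\sin\theta}=\frac{\sin(2m\theta)}{\sin\theta}=U_{2m-1}(\cos\theta)$, using $2\cos A\sin A=\sin 2A$. Identities $(4)$ and $(5)$ follow the same pattern: computing $2T_m U_m$ and $2T_{m+1}U_{m-1}$ via $2\cos A\sin B=\sin(A+B)-\sin(B-A)$ yields $\frac{\sin((2m+1)\theta)\pm\sin\theta}{\sin\theta}=U_{2m}(\cos\theta)\pm 1$, matching the claims exactly.

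The only genuine subtlety, and the step I would be most careful about, is the passage from identities proved for $x=\cos\theta$ with $\theta\in(0,\pi)$ to identities valid as polynomial identities in $\mathbb{Z}[x]$ (which is what is needed for later reductions modulo $p$). This is resolved by the standard density argument: two polynomials in $\mathbb{Z}[x]$ that agree on the infinitely many values $\{\cos\theta:\theta\in(0,\pi)\}\subset[-1,1]$ agree identically, so every trigonometrically verified identity lifts to a polynomial identity and hence holds over any field, in particular over $\F_p$. I would also remark that the extensions to negative indices $T_{-1},T_{-2},U_{-1},U_{-2}$ are simply read off by running the recurrences backwards, and one should check they are consistent with the parametrization via $\cos(-\theta)=\cos\theta$ and the relation $U_{-1}\equiv 0$; this consistency check is routine but worth stating so that the identities $(4)$--$(6)$ may be applied freely for small or boundary values of $m$ in the subsequent constructions.
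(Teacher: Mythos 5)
The paper supplies no proof of this lemma at all --- these are treated as classical facts about Chebyshev polynomials --- so there is nothing to compare against line by line; your trigonometric route ($T_m(\cos\theta)=\cos m\theta$, $U_m(\cos\theta)=\sin((m+1)\theta)/\sin\theta$, product-to-sum formulas, then the density argument to lift to polynomial identities in $\mathbb{Z}[x]$ and hence over $\F_p$) is the standard one and correctly establishes items $(2)$--$(6)$, including the consistency of the negative-index conventions $U_{-1}=0$, $U_{-2}=-1$, $T_{-1}=x$, $T_{-2}=2x^2-1$ needed to use $(4)$--$(6)$ at boundary values of $m$.

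Two index slips in your treatment of $(1)$ deserve attention. First, the Pascal step for $U_m$ should read $\binom{m-j}{j}+\binom{m-j}{j-1}=\binom{m+1-j}{j}$: after shifting $j\mapsto j-1$ in the $U_{m-1}$ sum the binomial becomes $\binom{m-j}{j-1}$, not $\binom{m-1-j}{j-1}$ as you wrote (your version fails already at $m=5$, $j=2$). Second, and more interestingly, carrying out your own suggestion $T_m=\tfrac12(U_m-U_{m-2})$ honestly yields the pair $\binom{m-j}{j}+\binom{m-j-1}{j-1}$, not the pair $\binom{m-j}{j}+\binom{m-j-1}{j}$ appearing in the displayed $t_{m,j}$ of the lemma. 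The discrepancy is not yours to reconcile: the stated coefficient is a typo in the lemma, as one sees by testing $m=2$, where the displayed formula gives $4x^2-\tfrac12$ instead of $T_2(x)=2x^2-1$, whereas the corrected coefficient $\tfrac12(-1)^j\{\binom{m-j}{j}+\binom{m-j-1}{j-1}\}=\tfrac12(-1)^j\tfrac{m}{m-j}\binom{m-j}{j}$ reproduces $T_2$ and $T_3$ correctly. So your method is sound, but you should state explicitly that the derivation produces the $\binom{m-j-1}{j-1}$ version (which is what the later definition of $h_{m,\kappa}$ actually requires), rather than appearing to confirm the misprinted one.
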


Define the polynomial $A_m(x)\in\mathbb{Z}[x]$ of degree $m$ by
\begin{align}
\label{def:A_m}
 A_m(x)
\coloneq
\prod_{\substack{d\,|\,2m+1 \\ d\ge 3}}\Psi_d(2x),
\end{align}
where $\Psi_m(x)$ is the minimal polynomial of $2\cos(2\pi/m)$ over $\mathbb{Q}$, which was studied, for example in \cite{L1933}.
As shown in \cite[Lemma~2.2]{LW2011},
$A_m(x)$ satisfies the recurrence relation 
\begin{alignat}{3}
\label{for:recursion formula for Am}
A_0(x)&= 1, &\quad 
A_1(x)&= 2x+1, &\quad 
A_{m+1}(x) 
&= 2xA_m(x)-A_{m-1}(x) \quad (m\ge 1).
\end{alignat}
Together with \eqref{for:recursion for U_m}, this shows that
\begin{align}
\label{for:relation between A and U}
A_{m}(x)=U_m(x)+U_{m-1}(x).
\end{align}
In this paper, $A_m(x/2)$ plays an important role. 
The first few are given as follows:
\begin{align*}
 A_0\left(\frac{x}{2}\right)&=1, \quad   
 A_1\left(\frac{x}{2}\right)=x+1, \quad   
 A_2\left(\frac{x}{2}\right)=x^2+x-1, \quad   
 A_3\left(\frac{x}{2}\right)=x^3+x^2-2 x-1, \\[5pt]
 A_4\left(\frac{x}{2}\right)&=x^4+x^3-3 x^2-2 x+1, \quad 
 A_5\left(\frac{x}{2}\right)=x^5+x^4-4 x^3-3 x^2+3 x+1.
\end{align*}
We also summarize several properties of $A_m(x)$.
Throughout this paper, we denote the discriminant of a polynomial $f(x)$ by $D(f(x))$. 

\begin{lemma}
\label{lem:Am properties} 
\begin{itemize}
\item[$(1)$] 
We have 
\begin{equation}
\label{for:explicit expression of Am}
 A_{m}(x)
=\sum^{m}_{j=0}a_{m,j}(2x)^{m-j},
\quad 
a_{m,j}=(-1)^{\lfloor \frac{j}{2}\rfloor}\binom{\lfloor \frac{2m-j}{2}\rfloor}{\lfloor \frac{j}{2}\rfloor}.
\end{equation}
\item[$(2)$]
$A_m(0)=(-1)^{\lfloor\frac{m}{2}\rfloor}$.
\item[$(3)$]
$A_m(1)=2m+1$ and $A_m(-1)=(-1)^m$.
\item[$(4)$]
For $m\ge 2$,
\begin{equation}
\label{for:disc of A}
D\left(A_m(x)\right)=2^{m(m-1)}(2m+1)^{m-1}.
\end{equation}
\end{itemize}
\end{lemma}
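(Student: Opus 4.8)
The plan is to obtain parts (1)--(3) directly from the identity \eqref{for:relation between A and U}, namely $A_m(x)=U_m(x)+U_{m-1}(x)$, and to reserve the real work for the discriminant in part (4). For part (1), I would substitute the expansion \eqref{for:explicit expression of U_m} of $U_m$ and $U_{m-1}$ into \eqref{for:relation between A and U} and read off the coefficient of $(2x)^{m-j}$. The key point is a parity separation: since $U_m$ carries only exponents $\equiv m \pmod 2$ and $U_{m-1}$ only exponents $\equiv m-1 \pmod 2$, the power $(2x)^{m-j}$ comes solely from $U_m$ when $j=2k$ is even and solely from $U_{m-1}$ when $j=2k+1$ is odd. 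In the even case one reads off $u_{m,k}=(-1)^k\binom{m-k}{k}$, in the odd case $u_{m-1,k}=(-1)^k\binom{m-1-k}{k}$, and both are captured uniformly by the stated $a_{m,j}=(-1)^{\lfloor j/2\rfloor}\binom{\lfloor(2m-j)/2\rfloor}{\lfloor j/2\rfloor}$. For parts (2) and (3) I would merely evaluate \eqref{for:relation between A and U} using Lemma~\ref{lem:T_m and U_m properties}(2),(3): at $x=0$ exactly one of $U_m(0),U_{m-1}(0)$ vanishes, leaving $(-1)^{\lfloor m/2\rfloor}$; at $x=\pm1$ we get $A_m(\pm1)=(\pm1)^m(m+1)+(\pm1)^{m-1}m$, which collapses to $2m+1$ and $(-1)^m$ respectively.

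The substance is part (4). First I would pass to the trigonometric parametrization $x=\cos\theta$. Writing $U_m(\cos\theta)=\sin((m+1)\theta)/\sin\theta$ and applying a sum-to-product identity to \eqref{for:relation between A and U} gives the closed form
\[
A_m(\cos\theta)=\frac{\sin\!\left(\tfrac{(2m+1)\theta}{2}\right)}{\sin(\theta/2)},
\]
from which the $m$ distinct roots of $A_m$ are $x_k=\cos\!\left(\tfrac{2\pi k}{2m+1}\right)$ for $1\le k\le m$. Since $A_m$ has degree $m$ with leading coefficient $2^m$ (from $a_{m,0}=1$), I would apply the standard identity $D(A_m)=(-1)^{m(m-1)/2}(2^m)^{m-2}\prod_{k=1}^m A_m'(x_k)$. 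Differentiating the closed form and using that its numerator vanishes at each $\theta_k=2\pi k/(2m+1)$ isolates the quotient, giving, after the substitution $\sin\theta_k=2\sin(\theta_k/2)\cos(\theta_k/2)$, the value $A_m'(x_k)=\frac{(2m+1)(-1)^{k+1}}{4\sin^2(\theta_k/2)\cos(\theta_k/2)}$.

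The final and most delicate step is evaluating $\prod_{k=1}^m A_m'(x_k)$. Here I would invoke the classical product identities $\prod_{k=1}^m\sin\!\left(\tfrac{\pi k}{2m+1}\right)=\sqrt{2m+1}/2^m$ and $\prod_{k=1}^m\cos\!\left(\tfrac{\pi k}{2m+1}\right)=1/2^m$ (noting $\theta_k/2=\pi k/(2m+1)$), which collapse the product to $2^m(2m+1)^{m-1}(-1)^{m(m+3)/2}$. Feeding this into the discriminant identity and combining the powers of $2$ as $2^{m(m-2)+m}=2^{m(m-1)}$ and the signs as $(-1)^{m(m-1)/2+m(m+3)/2}=(-1)^{m(m+1)}=1$ yields \eqref{for:disc of A}. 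I expect the main obstacle to be precisely this bookkeeping of the power of $2$ and the parity of the accumulated sign, where the two half-integer exponents must telescope cleanly into the even number $m(m+1)$; should the trigonometric route prove unwieldy, an alternative is an induction on the three-term recurrence \eqref{for:recursion formula for Am} exploiting multiplicativity of resultants, but the closed-form computation above seems the most transparent.
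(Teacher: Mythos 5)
Your proposal is correct, but it is substantially more self-contained than the paper's proof, which consists of a single sentence citing \cite[Theorem~2.2, Corollary~3.2]{LW2011} for parts (1)--(3) and \cite[Theorem~4]{DS2005} for part (4). For (1)--(3) you take exactly the alternative route the paper parenthetically suggests (``or derived via \eqref{for:relation between A and U}''): the parity separation of exponents between $U_m$ and $U_{m-1}$ and the evaluations at $0,\pm1$ all check out against Lemma~\ref{lem:T_m and U_m properties}. The genuine divergence is in (4): where the paper simply imports the Dilcher--Stolarsky discriminant formula, you recompute it from scratch via the closed form $A_m(\cos\theta)=\sin\bigl(\tfrac{(2m+1)\theta}{2}\bigr)/\sin(\theta/2)$, the roots $x_k=\cos\bigl(\tfrac{2\pi k}{2m+1}\bigr)$, the identity $D(f)=(-1)^{m(m-1)/2}a_m^{\,m-2}\prod_k f'(x_k)$, and the classical sine and cosine products over $k\pi/(2m+1)$. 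I verified the bookkeeping: $\prod_k(-1)^{k+1}=(-1)^{m(m+3)/2}$ combines with $(-1)^{m(m-1)/2}$ to give $(-1)^{m(m+1)}=1$, and the powers of $2$ assemble to $2^{m(m-1)}$; the result agrees with \eqref{for:disc of A} (e.g.\ $D(4x^2+2x-1)=20=2^2\cdot5$). What your approach buys is independence from \cite{DS2005} at the cost of invoking the two classical product evaluations; what the paper's citation buys is brevity and the observation that the needed discriminant is literally a known result once \eqref{for:relation between A and U} identifies $A_m$ with the polynomial treated there.
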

\begin{proof}
The first three properties are obtained in 
\cite[Theorem~2.2, Corollary~3.2 
]{LW2011} (or derived via \eqref{for:relation between A and U}).
The last one follows from \cite[Theorem~4]{DS2005} by applying the expression  \eqref{for:relation between A and U}.
\end{proof}

Moreover, we have the following factorization of $U_m(x)$ in terms of $A_m(x)$.

\begin{lemma}
\label{lem:U by A}
\begin{itemize}
\item[$(1)$] 
$U_{2m}(x)=(-1)^mA_m(x)A_m(-x)$.
\item[$(2)$] 
$U_{2m-1}(x)+1=(-1)^{m-1}A_{m-1}(-x)A_{m}(x)$.
\item[$(3)$] 
$U_{2m-1}(x)-1=(-1)^{m}A_{m-1}(x)A_{m}(-x)$.
\end{itemize}
\end{lemma}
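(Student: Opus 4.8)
The plan is to reduce all three identities to bilinear relations among the $U_m(x)$ using \eqref{for:relation between A and U} together with the parity of the Chebyshev polynomials of the second kind, and then to verify those relations by a single linearization formula. First I would record the parity relation: since every monomial in the explicit expansion \eqref{for:explicit expression of U_m} has degree $m-2j\equiv m \pmod 2$, we have $U_m(-x)=(-1)^m U_m(x)$. Combined with \eqref{for:relation between A and U}, i.e. $A_m(x)=U_m(x)+U_{m-1}(x)$, this yields
\begin{equation*}
A_m(-x)=U_m(-x)+U_{m-1}(-x)=(-1)^m\bigl(U_m(x)-U_{m-1}(x)\bigr).
\end{equation*}

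Substituting these two expressions into the right-hand sides of $(1)$--$(3)$, the prefactors $(-1)^m$ and $(-1)^{m-1}$ cancel, and the claims become the purely $U$-theoretic identities
\begin{align*}
U_{2m}(x) &= U_m(x)^2 - U_{m-1}(x)^2, \\
U_{2m-1}(x)+1 &= \bigl(U_{m-1}(x)-U_{m-2}(x)\bigr)\bigl(U_m(x)+U_{m-1}(x)\bigr), \\
U_{2m-1}(x)-1 &= \bigl(U_{m-1}(x)+U_{m-2}(x)\bigr)\bigl(U_m(x)-U_{m-1}(x)\bigr).
\end{align*}
To establish these, the most transparent route is the trigonometric substitution $x=\cos\theta$, under which $U_k(\cos\theta)=\sin((k+1)\theta)/\sin\theta$; each right-hand side then collapses to its left-hand side via elementary product-to-sum formulas (such as $\sin^2 A-\sin^2 B=\sin(A+B)\sin(A-B)$), and the polynomial identity follows since it holds on the infinite set $\{\cos\theta\}$. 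Alternatively, and more in keeping with the algebraic flavour of the paper, one may invoke the linearization formula $U_a(x)U_b(x)=\sum_{k=0}^{\min(a,b)}U_{a+b-2k}(x)$: expanding each product and cancelling the telescoping sums leaves exactly $U_{2m}$, $U_{2m-1}+U_0$, and $U_{2m-1}-U_0$, respectively, and $U_0=1$.

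I expect the difficulty here to be bookkeeping rather than conceptual depth. The one point requiring genuine care is tracking the sign factors coming from the parity relation, and checking the boundary cases for small $m$ in which negative indices occur, using the conventions $U_{-1}(x)=0$ and $U_{-2}(x)=-1$ fixed above (for instance $m=1$ in $(2)$, where $U_{m-2}=U_{-1}=0$). Once the reduction is in place, no genuinely new computation is needed: all three identities follow from the single linearization (equivalently product-to-sum) identity.
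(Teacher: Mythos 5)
Your proof is correct, but it takes a genuinely different route from the paper. The paper disposes of the lemma by citation: part $(1)$ is quoted from equation (1.4) of the Lee--Wong reference, and parts $(2)$ and $(3)$ are deduced from identities (56) and (57) of K\'eri's paper via the cyclotomic-product identity $(-1)^mA_m(-x)=\prod_{d\mid 2m+1,\,d\ge 3}\Psi_{2d}(2x)$, so the argument there leans on the interpretation of $A_m$ as a product of minimal polynomials of $2\cos(2\pi/d)$. You instead give a self-contained computation: the parity relation $U_m(-x)=(-1)^mU_m(x)$ together with $A_m=U_m+U_{m-1}$ turns all three claims into the identities $U_{2m}=U_m^2-U_{m-1}^2$, $U_{2m-1}\pm 1=(U_{m-1}\mp U_{m-2})(U_m\pm U_{m-1})$, and these follow from either the trigonometric substitution or the linearization $U_aU_b=\sum_{k=0}^{\min(a,b)}U_{a+b-2k}$ (I checked that the telescoping does leave exactly $U_{2m}$, $U_{2m-1}+U_0$, and $U_{2m-1}-U_0$, and that the sign prefactors cancel as you claim). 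Your version buys independence from the two external references at the cost of a short computation; the paper's version buys brevity and keeps visible the cyclotomic factorization structure of $A_m$, which is the reason these polynomials were introduced in the first place. Your attention to the negative-index conventions $U_{-1}=0$, $U_{-2}=-1$ in the small-$m$ cases is the right thing to worry about and causes no trouble.
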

\begin{proof}
The first equation is given in \cite[(1.4)]{LW2011}.
The second and third equations are derived from (56) and (57) of \cite{K2022}, respectively, by applying the identity
\begin{align}
\label{for:A_m -x}
 (-1)^mA_m(-x)
 =\prod_{\substack{d\,|\,2m+1 \\ d\ge 3}}\Psi_{2d}(2x),
\end{align} 
which follows from \eqref{def:A_m}.
\end{proof}

\section{A construction of $K_{3,3}$-subdivisions in $\Gkp$}
\label{sec:Six triples}

\subsection{Sextuples of Markoff triples}

In this section, 
we construct a $K_{3,3}$-subdivision 
in $\Gkp$
from a sextuple 
\begin{equation}
\label{for:K}
K=\left(\begin{array}{ccc}
X_1,\!\! & X_2,\!\! & X_3  \\
Y_1,\!\! & Y_2,\!\! & Y_3
\end{array}\right)
\end{equation}
of Markoff triples  
$X_1,X_2,X_3,Y_1,Y_2,Y_3\in \Mkp$,
where  
\[
X_1=(a_1,a_2,a_3), \quad 
X_2=(b_1,b_2,b_3), \quad
X_3=(c_1,c_2,c_3)
\]
and $Y_i=R_i(X_i)$ for $1\le i\le 3$.
We begin with the following proposition.

\begin{prop}
For any $\kappa\in\mathbb{Z}$
and any prime $p>3$,
the graph $\Gkp$ contains no subgraph isomorphic to $K_{3,3}$.  
\end{prop}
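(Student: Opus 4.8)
The plan is to argue by contradiction and to exploit the rigid coordinate structure of the Vieta involutions. Suppose $\Gkp$ contains a subgraph $H$ isomorphic to $K_{3,3}$, with the two bipartition classes consisting of three vertices each. Since $\Gkp$ is $3$-regular and every vertex of $K_{3,3}$ already has degree $3$, each of the six vertices $v$ of $H$ must use all of its incident edges of $\Gkp$; in particular its three $H$-neighbours are exactly $R_1(v)$, $R_2(v)$, $R_3(v)$. As $H$ is a simple graph in which $v$ has three distinct neighbours, the triples $R_1(v),R_2(v),R_3(v)$ are pairwise distinct and different from $v$, so no vertex of $H$ carries a loop or a multiple edge. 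Consequently each edge of $H$ can be unambiguously labelled by the unique index $i\in\{1,2,3\}$ for which it joins $v$ to $R_i(v)$; because every $R_i$ is an involution this label is consistent from both endpoints, and because each vertex meets exactly one edge of each label, the labelling is a proper $3$-edge-colouring of $H$.

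First I would record the key geometric fact: for each $i$, the involution $R_i$ alters only the $i$-th coordinate and fixes the other two (for instance $R_1(x,y,z)=(yz-x,y,z)$). Hence an edge coloured $i$ joins two triples that agree in every coordinate except the $i$-th, and---since its two endpoints are distinct---genuinely differ in the $i$-th coordinate.

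Next I would invoke a purely combinatorial fact about $K_{3,3}$: the union of the colour-$2$ and colour-$3$ matchings is a $2$-regular bipartite subgraph on all six vertices with no repeated edge (the two matchings are disjoint, as each edge carries a single colour), so it is a single spanning $6$-cycle---a decomposition into shorter even cycles would force a $2$-cycle, i.e. a repeated edge, which is excluded. Travelling along this cycle, every edge preserves the first coordinate, whence all six vertices of $H$ share one common first coordinate. On the other hand the colour-$1$ matching is nonempty (each vertex meets an edge of each colour), and any colour-$1$ edge joins two triples with \emph{distinct} first coordinates, which is the desired contradiction.

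The main obstacle here is not the final contradiction but the bookkeeping that legitimises the $3$-edge-colouring: one must carefully rule out loops and repeated neighbours at the vertices of $H$ using the $3$-regularity, and then verify that the index attached to each edge is well defined and symmetric in its endpoints. Once the colouring is in place, the argument reduces to the elementary observation that two of the three involutions fix the first coordinate while the third does not, combined with the fact that two perfect matchings of $K_{3,3}$ always merge into a single spanning $6$-cycle.
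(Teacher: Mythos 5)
Your proof is correct and rests on the same mechanism as the paper's: $3$-regularity forces the three $H$-neighbours of each vertex to be its three distinct Vieta images, which yields a proper $3$-edge-colouring of $K_{3,3}$ by involution index, and the fact that $R_i$ fixes all but the $i$-th coordinate then produces the contradiction. The only minor difference is the endgame --- the paper chases coordinates through the unique admissible colouring to conclude $X_1=X_2=X_3$, whereas you observe that the colour-$2$ and colour-$3$ edges form a spanning $6$-cycle along which the first coordinate is constant, contradicting the existence of a colour-$1$ edge --- and both versions are valid.
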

\begin{proof}
Suppose that the six triples $X_1, X_2, X_3, Y_1, Y_2, Y_3\in \Mkp$ form a subgraph isomorphic to $K_{3,3}$.  
Then the only possibility is that
$Y_1=R_3(X_2)=R_2(X_3)$,
$Y_2=R_1(X_3)=R_3(X_1)$ and 
$Y_3=R_2(X_1)=R_1(X_2)$.
This implies $a_1=b_1=c_1$, $a_2=b_2=c_2$, and $a_3=b_3=c_3$,
which means $X_1=X_2=X_3$.
This contradicts the assumption that the triples form $K_{3,3}$.
\end{proof}

For $n\in\mathbb{Z}_{>0}$,
we study sextuples $K=\left(\begin{array}{ccc}
X_1,\!\! & X_2,\!\! & X_3  \\
Y_1,\!\! & Y_2,\!\! & Y_3
\end{array}\right)$ 
satisfying the following system of equations:
\begin{align}
\label{for:explicit minors equation}
\begin{split}
 Y_1&=R_1(X_1)=(R_2R_1)^n(X_2)=(R_3R_1)^n(X_3),\\   
 Y_2&=R_2(X_2)=(R_3R_2)^n(X_3)=(R_1R_2)^n(X_1),\\   
 Y_3&=R_3(X_3)=(R_1R_3)^n(X_1)=(R_2R_3)^n(X_2),
\end{split}
\end{align}
which is equivalent to
\begin{align}
\label{for:explicit minors equation 2}
 X_j=R_j(R_iR_j)^n(X_i)=R_j(R_kR_j)^n(X_k)\quad \text{for any $\{i,j,k\}=\{1,2,3\}$}.
\end{align}
One may expect that such a sextuple $K$ yields the desired subdivision
(see Figure~\ref{fig:explicit minors}).
Notice that, since $R_i$ changes only the $i$-th coordinate for $1\le i\le 3$, 
the equations in \eqref{for:explicit minors equation}
immediately imply
\begin{equation}
\label{for:easily verified} 
(b_1,c_2,a_3)=(c_1,a_2,b_3).
\end{equation}

\begin{figure}[h]
\begin{center}
\begin{tikzpicture}
\draw(0,0)--(0,3);
\draw(0,0)--(4,3);
\draw(0,0)--(8,3);
\draw(4,0)--(0,3);
\draw(4,0)--(4,3);
\draw(4,0)--(8,3);
\draw(8,0)--(0,3);
\draw(8,0)--(4,3);
\draw(8,0)--(8,3);
\draw(0,0)node[below]{$Y_1$};
\draw(4,0)node[below]{$Y_2$};
\draw(8,0)node[below]{$Y_3$};
\draw(0,3)node[above]{$X_1$};
\draw(4,3)node[above]{$X_2$};
\draw(8,3)node[above]{$X_3$};
\draw(0,2.4)node[left]{{\footnotesize $R_1$}};
\draw(4,2.4)node[left]{{\footnotesize $R_2$}};
\draw(8,2.4)node[right]{{\footnotesize $R_3$}};
\draw(1.4,1.9)node[left]{{\footnotesize $(R_1R_2)^n$}};
\draw(2,2.9)node[left]{{\footnotesize $(R_1R_3)^n$}};
\draw(4.5,2.7)node[right]{{\footnotesize $(R_2R_3)^n$}};
\draw(3.6,2.7)node[left]{{\footnotesize $(R_2R_1)^n$}};
\draw(6,2.9)node[right]{{\footnotesize $(R_3R_1)^n$}};
\draw(6.6,1.9)node[right]{{\footnotesize $(R_3R_2)^n$}};
\end{tikzpicture}
\caption{A sextuple of Markoff triples}
\label{fig:explicit minors}
\end{center}
\end{figure}
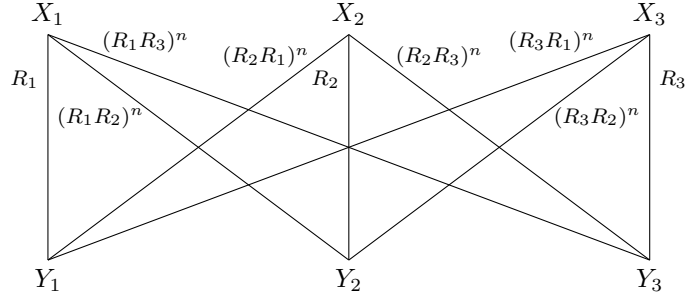

For $\kappa\in\mathbb{Z}$, 
a prime $p>3$, and $n\in\mathbb{Z}_{>0}$, 
let $\Knkpall$ denote the set of all sextuples
$K$ of the form \eqref{for:K} satisfying \eqref{for:explicit minors equation}.
From now on, we always identify $K\in \Knkpall$ 
with the subgraph of $\Gkp$ consisting of
six vertices $X_1,X_2,X_3,Y_1,Y_2,Y_3$,
three $X_i$-$Y_i$-paths $(X_i,R_i(X_i)=Y_i)$ of length $1$ for $1\le i\le 3$, and six $X_i$-$Y_j$-paths
\[
(X_i,R_j(X_i),(R_iR_j)(X_i),\ldots,R_j(R_iR_j)^{n-1}(X_i),
(R_iR_j)^n(X_i)=Y_j)
\] 
of length $2n$ for $1\le i\ne j \le 3$.
We say that $K$ is 
{\it distinct}
if $X_1,X_2,X_3,Y_1,Y_2,Y_3$ are all distinct.
Moreover, we call $K$
{\it proper} if 
none of $Z\in \{X_1,X_2,X_3,Y_1,Y_2,Y_3\}$ is an internal vertex of any $X_i$-$Y_j$-path for $1\leq i\ne  j\leq 3$.
Note that $K$ cannot be proper if $n$ is too large relative to $p$. 
In fact, $K$ is not proper if, for example, $X_i$ or $Y_j$ is fixed by $(R_iR_k)^m$ or $(R_jR_k)^m$ for some $1 \leq i, j\leq 3$, $k\neq i,j$, and $m<n$.
The conditions under which such situations occur are discussed in \cite[Proposition~7.1]{C2024}.
Put
\begin{align*}
 \Knkpdist
\coloneq\{K\in \Knkpall\,|\,\text{$K$ is distinct}\}, 
\qquad
 \Knkpprop
\coloneq\left\{K\in \Knkpdist\,|\,
\text{$K$ is proper}
\right\}.
\end{align*}

The next lemma shows that any element of $\Knkpprop$
yields a $K_{3,3}$-subdivision in $\Gkp$.


\begin{lemma}
\label{lem:properties of Kndd}
Suppose that $\Knkpprop\ne\emptyset$,
and let  
$K=\left(\begin{array}{ccc}
X_1,\!\! & X_2,\!\! & X_3  \\
Y_1,\!\! & Y_2,\!\! & Y_3
\end{array}\right)\in\Knkpprop$.
\begin{itemize}
\item[$(1)$]
Any internal vertex $W$ of the $X_i$-$Y_j$-path satisfies  $R_i(W)\ne W\ne R_j(W)$
for all $1\leq i\ne j\leq 3$.
\item[$(2)$]
Any distinct $X_i$-$Y_j$ and $X_{i'}$-$Y_{j'}$ paths share no edges for all $1 \le i,j \le 3$ and $1 \le i', j' \le 3$.
\end{itemize}
These in particular imply that 
every $K\in\Knkpprop$ is a $K_{3,3}$-subdivision in $\Gkp$.
\end{lemma}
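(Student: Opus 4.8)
The plan is to prove the two itemized assertions first and then read off the subdivision property from them by a degree count that uses the $3$-regularity of $\Gkp$. Recall that a subgraph formed by the six branch vertices $X_1,X_2,X_3,Y_1,Y_2,Y_3$ and the nine connecting paths is a $K_{3,3}$-subdivision exactly when the branch vertices are pairwise distinct, each path is simple, no path meets a branch vertex in its interior, and any two of the paths are internally vertex-disjoint. The membership $K\in\Knkpprop$ already grants distinctness and the absence of branch vertices in path interiors, so the role of $(1)$ and $(2)$ is to force path simplicity and internal disjointness.

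For $(1)$ I would exploit the dihedral structure of the action on a single conic slice. Fixing the $X_i$-$Y_j$-path, both $R_i$ and $R_j$ preserve the $k$-th coordinate (with $\{i,j,k\}=\{1,2,3\}$), so the whole path lies on one slice, on which $\rho\coloneq R_iR_j$ acts as a ``rotation'' satisfying $R_i\rho^{\,t}R_i=\rho^{-t}$; the even path vertices are $\rho^{\,t}(X_i)$ and the odd ones $R_j\rho^{\,t}(X_i)$. An internal vertex $W=\rho^{\,t}(X_i)$ with $R_i(W)=W$ would give $Y_i=R_i(X_i)=\rho^{\,2t}(X_i)$. Using in addition the companion relation $Y_i=\rho^{-n}(X_j)$ read off from \eqref{for:explicit minors equation}, one checks that for every $t$ this either places the branch vertex $Y_i$ at an interior point of a path, or forces $X_i$ to be $\rho$-periodic with period at most $n-1$ and hence $Y_j=\rho^{\,n}(X_i)$ to collide with an earlier orbit point; both outcomes contradict properness or distinctness. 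The reflection $R_j$ and the odd-indexed vertices are handled by the same computation after conjugating by the appropriate involution, so the only labor is a routine bookkeeping over the parity of $t$ and the resulting range of exponents.

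For $(2)$ I would classify a hypothetical shared edge $e=\{V,R_\ell(V)\}$ by the constant coordinates of the two paths carrying it. If these constant coordinates differ, then $V$ and $R_\ell(V)$ agree in two coordinates, which forces $\ell$ to be the unique remaining index and pins $e$ to the single admissible edge-type in each slice; a short positional comparison together with properness then excludes it. The delicate situation is when the two paths share the same constant coordinate --- by \eqref{for:easily verified} this genuinely occurs, for instance for the $X_1$-$Y_2$- and $X_2$-$Y_1$-paths, which both lie on the slice with third coordinate $a_3=b_3$. Here the two paths are orbit segments of one and the same dihedral action $\langle R_i,R_j\rangle$, and I would argue that properness keeps each segment shorter than the period of $\rho$ (which is controlled by the Chebyshev polynomials of Section~\ref{sect:chebyshev}), so that a common edge would again place an endpoint of one segment in the interior of the other, contradicting properness or distinctness. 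This same-slice overlap analysis is where I expect the essential difficulty to lie, since here the $3$-regularity alone yields no contradiction.

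Finally I would assemble the subdivision. First, each long path is simple: it is an orbit segment of $\rho$, and a repetition $\rho^{\,a}(X_i)=\rho^{\,b}(X_i)$ with $a<b\le n$ would make $X_i$ periodic under $\rho$ and hence force $Y_j=\rho^{\,n}(X_i)$ to coincide with an earlier orbit point, i.e.\ a branch vertex with an interior vertex, against properness (the coincidence of two odd vertices, and of an odd with an even vertex, reduces to the same mechanism as in $(1)$). Second, by $(1)$ every interior vertex carries two genuine, distinct edges of its path; if some vertex $W$ were interior to two distinct paths, then those four incident edges would be pairwise distinct by $(2)$, so $W$ would be incident to at least four distinct edges, impossible in the cubic graph $\Gkp$, where each vertex meets at most three edges. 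Hence the nine paths are simple, avoid the branch vertices internally, and are pairwise internally vertex-disjoint, so $K$ is a $K_{3,3}$-subdivision in $\Gkp$.
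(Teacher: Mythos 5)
Your proposal is correct and follows essentially the same route as the paper's proof: part $(1)$ is handled by the dihedral relation $R_i\rho^{t}R_i=\rho^{-t}$ (with $\rho=R_iR_j$), which converts a fixed internal vertex into a branch vertex sitting in the interior of a path, and part $(2)$ by splitting according to whether the two paths have the same constant coordinate (a positional comparison along one orbit) or different ones (two pinned coordinates force any shared vertex to be one of the two roots of the Markoff quadratic, hence a branch vertex). The only notable difference is that you spell out the path-simplicity check and the cubic degree count needed to pass from $(1)$ and $(2)$ to internal vertex-disjointness, steps the paper leaves implicit in ``these in particular imply''.
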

\begin{proof}
(1) By symmetry of $\Gkp$, 
it suffices to consider the $X_1$-$Y_2$-path.
Suppose that there exists an internal vertex $W$ on this path for which the claim does not hold. 
There are two cases to consider:
$W=(R_1R_2)^m(X_1)$ for some $0<m<n$ with $R_2(W)=W$, or $W=R_2(R_1R_2)^m(X_1)$ for some $0\le m<n$ with $R_1(W)=W$.
Suppose the former holds.
Then, we have $R_2(R_1R_2)^{2m}(X_1)=X_1$,
which implies that $X_1$ (resp. $X_2$) is an internal vertex of the $X_1$-$Y_2$-path if $0<m<\frac{n}{2}$ (resp. $\frac{n}{2}\le m<n$).
This never occurs because $K\in\Knkpprop$.
The latter case is similar.

\smallbreak 

(2) By the above symmetry, 
it suffices to consider the cases $(i,j)=(1,1),(1,2)$.
The former is clear. 
For the latter, the problem reduces, 
again by symmetry, to the cases 
$(i',j')=(2,1),(1,3),(2,3)$.
\begin{itemize}
\item[(i)] 
If the $X_1$-$Y_2$-path and the $X_2$-$Y_1$-path share a common edge corresponding to $R_1$ 
(the case for $R_2$ is similar),
then we have $R_2(R_1R_2)^m(X_1)=(R_2R_1)^l(X_2)$ for some $0\le m<n$ and $0<l<n$.
This implies that 
$X_2=R_2(R_1R_2)^{m-l}(X_1)$ if $m\ge l$ and $X_1=R_1(R_2R_1)^{l-m-1}(X_2)$ otherwise,
which leads to a contradiction as above.
\item[(ii)] 
If the $X_1$-$Y_2$-path and the $X_1$-$Y_3$-path share a common edge, then they also share a common internal vertex, say $W=(x,y,z)$.
Since $R_i$ changes only the $i$-th coordinate,
we have $y=b_1$ and $z=c_1$.
Therefore, $x=a_1$ or $b_1c_1-a_1$, which implies that $W=X_1$ or $Y_1$. This is a contradiction.
\item[(iii)] 
A similar argument to (ii), using the relation  \eqref{for:easily verified},
shows that the $X_1$-$Y_2$-path and the $X_2$-$Y_3$-path share no common edges.
\end{itemize}
\end{proof}


Our target is $\Knkpprop$;
since $K_{3,3}$ is a forbidden minor for planar graphs by the Kuratowski-Wagner theorem, the following is an immediate consequence of Lemma~\ref{lem:properties of Kndd}.

\begin{cor}
\label{cor:sufficient condition non-planarity}
The graph $\Gkp$ is non-planar if there exists $n\in\mathbb{Z}_{>0}$ such that $\Knkpprop\ne\emptyset$.
\qed
\end{cor}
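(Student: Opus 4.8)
The plan is to read off the statement directly from Lemma~\ref{lem:properties of Kndd} together with Kuratowski's theorem, so that essentially no new work is needed. First I would suppose that $\Knkpprop\ne\emptyset$ for some $n\in\mathbb{Z}_{>0}$ and fix an arbitrary $K\in\Knkpprop$. By the concluding assertion of Lemma~\ref{lem:properties of Kndd}, such a $K$ is an honest $K_{3,3}$-subdivision inside $\Gkp$: distinctness and properness (both built into the definition of $\Knkpprop$) ensure that the six branch vertices $X_1,X_2,X_3,Y_1,Y_2,Y_3$ are pairwise distinct and that none of them lies in the interior of a connecting path, while parts~(1) and~(2) of the lemma ensure that the nine connecting paths are genuine paths and are pairwise edge-disjoint. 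Hence the branch vertices together with the connecting paths form a subgraph of $\Gkp$ homeomorphic to $K_{3,3}$.

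The second step is simply to invoke Kuratowski's theorem: a finite graph is planar if and only if it contains no subdivision of $K_{3,3}$ and no subdivision of $K_5$. Since the subgraph produced from $K$ is a $K_{3,3}$-subdivision, $\Gkp$ contains such a subdivision and therefore cannot be planar. Only the $K_{3,3}$ half of the theorem is used; equivalently, one may phrase the conclusion through the Kuratowski-Wagner minor characterization cited in the introduction, since a $K_{3,3}$-subdivision in particular witnesses a $K_{3,3}$-minor.

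Because all of the structural content has already been packaged into Lemma~\ref{lem:properties of Kndd}, I expect no real obstacle here. The only point deserving a moment's attention is confirming that the object at hand is genuinely a subgraph of $\Gkp$ --- rather than merely a homomorphic image of $K_{3,3}$ --- and this is exactly what the distinctness and properness conditions, refined by the two parts of the lemma, supply. The argument is thus immediate, which is why the statement is recorded as a corollary with no separate proof.
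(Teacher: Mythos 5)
Your proposal is correct and matches the paper's argument exactly: the paper also records this as an immediate consequence of Lemma~\ref{lem:properties of Kndd} combined with the Kuratowski--Wagner characterization, giving no separate proof. Nothing further is needed.
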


Moreover, the result
$\Knkpprop\ne\emptyset$ implies not only the
non-planarity of $\Gkp$
but also the existence of cycles of a specific length.

\begin{cor}
\label{cor:cycles}
The graph $\Gkp$ contains cycles 
of lengths $4n+2$, $6n+3$, and $8n+2$ if there exists $n\in\mathbb{Z}_{>0}$ such that $\Knkpprop\ne\emptyset$.
\end{cor}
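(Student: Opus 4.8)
The plan is to exhibit, inside any proper sextuple $K\in\Knkpprop$, three families of cycles whose lengths are exactly $4n+2$, $6n+3$, and $8n+2$, by concatenating the $X_i$-$Y_j$-paths and the short $X_i$-$Y_i$-edges in combinatorially natural ways. By Lemma~\ref{lem:properties of Kndd}, $K$ is a genuine $K_{3,3}$-subdivision with six branch vertices $X_1,X_2,X_3,Y_1,Y_2,Y_3$, where each $X_i$-$Y_j$-path has length $2n$ for $i\neq j$ and each $X_i$-$Y_i$-edge has length $1$; moreover any two of these paths are internally disjoint and edge-disjoint. Thus every cycle in the abstract graph $K_{3,3}$ lifts to a cycle in $\Gkp$ whose length is the sum of the corresponding branch-path lengths, and because the subdivision is proper these lifted cycles have no repeated vertices or edges. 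It therefore suffices to find cycles of the three prescribed combinatorial types in $K_{3,3}$ itself and compute their lifted lengths.

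Concretely, I would argue as follows. First, a $4$-cycle in $K_{3,3}$ using edges $X_1Y_2,\,X_2Y_2,\,X_2Y_1,\,X_1Y_1$ lifts to a cycle traversing two length-$2n$ paths ($X_1$-$Y_2$ and $X_2$-$Y_1$) and two length-$1$ edges ($Y_2$-$X_2$ and $Y_1$-$X_1$), for a total length $2n+1+2n+1=4n+2$. Second, a $6$-cycle in $K_{3,3}$ using the three ``short'' edges $X_iY_i$ together with a Hamiltonian-type alternation lifts to a cycle through three length-$1$ edges and three length-$2n$ paths, yielding $3\cdot 1+3\cdot 2n=6n+3$. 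Third, a $4$-cycle built from four of the length-$2n$ cross paths (avoiding the short edges), for instance $X_1$-$Y_2$, $Y_2$-$X_3$, $X_3$-$Y_1$, $Y_1$-$X_1$, lifts to total length $4\cdot 2n=8n$; to reach $8n+2$ one instead takes a suitable $4$-cycle that replaces two of the cross-path segments so that the length count becomes $8n+2$. In each case the key point is that distinctness of the six branch vertices and properness guarantee the lifted closed walk is in fact a cycle (no repeated vertices or edges), which is exactly the content of Lemma~\ref{lem:properties of Kndd}.

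The main obstacle I anticipate is bookkeeping rather than any deep difficulty: one must verify that the particular closed walk chosen in $K_{3,3}$ actually produces a cycle of the claimed length in $\Gkp$, i.e. that the intermediate (degree-two) subdivision vertices along the chosen branch-paths are genuinely distinct and that no branch-path is traversed twice. For the lengths $4n+2$ and $8n+2$ this is automatic since each branch-path is used at most once and Lemma~\ref{lem:properties of Kndd}(2) gives edge-disjointness. For $6n+3$ one must additionally check that combining the three short edges with three long paths closes up into a single cycle and not a disjoint union of smaller cycles; this follows because the corresponding subgraph of $K_{3,3}$ is a $6$-cycle (a connected $2$-regular subgraph on all six branch vertices), and the subdivision preserves this cycle structure. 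Finally, I would note that the existence of a cycle of each of these lengths does not require minimality or any extremal property—only that $\Knkpprop\neq\emptyset$—so the statement follows directly once the three cycle types are identified and their lifted lengths tabulated.
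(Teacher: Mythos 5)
Your overall framework is exactly the paper's: lift cycles of the abstract $K_{3,3}$ through the subdivision, using Lemma~\ref{lem:properties of Kndd} to guarantee that the lifted closed walks are genuine cycles, and sum the branch-path lengths ($2n$ for each $X_i$-$Y_j$-path with $i\ne j$, $1$ for each $X_i$-$Y_i$-edge). Your constructions for $4n+2$ (two short edges plus two long paths) and $6n+3$ (three short edges plus three long paths) coincide with the paper's and are correct.

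The construction for $8n+2$ has a genuine gap. You correctly observe that a $4$-cycle built from four cross-paths lifts to length $8n$, but your proposed repair --- ``a suitable $4$-cycle that replaces two of the cross-path segments'' --- cannot work: every $4$-cycle in $K_{3,3}$ uses exactly four branch edges, each of length $1$ or $2n$, and a case check shows the only achievable lifted lengths are $8n$ (no short edges), $6n+1$ (one short edge), and $4n+2$ (two short edges). The value $8n+2$ is unattainable from any $4$-cycle. What is needed is a $6$-cycle of $K_{3,3}$ using exactly \emph{two} of the three matching edges $X_iY_i$ together with four cross-paths, e.g.
\[
X_1 \to Y_1 \to X_2 \to Y_3 \to X_3 \to Y_2 \to X_1,
\]
whose lift has length $1+2n+2n+1+2n+2n=8n+2$. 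This is precisely the cycle the paper exhibits. With that substitution (and the same disjointness argument from Lemma~\ref{lem:properties of Kndd} that you already invoke), your proof is complete.
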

\begin{proof}
For any $K\in\Knkpprop$, 
we have the following cycles of lengths:
\begin{itemize}
\item $4n+2$\,:
$X_1
\overset{1}{\to} Y_1
\overset{2n}{\to} X_2
\overset{1}{\to} Y_2
\overset{2n}{\to} X_1$, 
\item $6n+3$\,: 
$X_1
\overset{1}{\to} Y_1
\overset{2n}{\to} X_2
\overset{1}{\to} Y_2
\overset{2n}{\to} X_3
\overset{1}{\to} Y_3
\overset{2n}{\to} X_1$, 
\item $8n+2$\,:
$X_1
\overset{1}{\to} Y_1
\overset{2n}{\to} X_2
\overset{2n}{\to} Y_3
\overset{1}{\to} X_3
\overset{2n}{\to} Y_2
\overset{2n}{\to} X_1$.
\end{itemize}
\end{proof}


To study $\Knkpprop$ in detail,
we first clarify all elements of 
$\Knkpall$ and $\Knkpdist$. 
For $\alpha,\beta\in\mathbb{F}_p$, let 
\begin{equation}
\label{def:K alpha beta}
K_{(\alpha,\beta)}
\coloneq\left(\begin{array}{ccc}
(\alpha,\beta,\beta),\!\! & 
(\beta,\alpha,\beta),\!\! & 
(\beta,\beta,\alpha)  \\
(\overline{\alpha},\beta,\beta),\!\! &
(\beta,\overline{\alpha},\beta),\!\! & 
(\beta,\beta,\overline{\alpha}) 
\end{array}\right),
\end{equation}
where $\overline{\alpha}\coloneq
\beta^2-\alpha\in\mathbb{F}_p$.
Notice that $K_{(\alpha,\beta)}$ is a sextuple of Markoff triples if $f_{\kappa}(\beta,\alpha)=0$,
where
\[
f_{\kappa}(x,y)
\coloneq y^2-x^2y+2x^2-\kappa.
\]
Moreover, 
it is easily verified that $K_{(\alpha,\beta)}$ and $K_{(\overline{\alpha},\beta)}$
coincide as subgraphs of $\Gkp$.

Let $\Sigma$ be the group of double sign changes, which is isomorphic to $(\mathbb{Z}/2\mathbb{Z})^2$.
The following is direct.
\begin{lemma}
\label{lem-double-sign-k}
For any $\sigma\in \Sigma$ and $1\leq i \leq 3$,
$\sigma$ and $R_i$ commute.
In particular, $\sigma$ is an automorphism of $\Gkp$.
\end{lemma}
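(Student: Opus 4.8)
The plan is to reduce everything to a single algebraic identity by encoding each element of $\Sigma$ as a diagonal sign map. Concretely, I would write an arbitrary $\sigma\in\Sigma$ as $\sigma(v_1,v_2,v_3)=(\varepsilon_1 v_1,\varepsilon_2 v_2,\varepsilon_3 v_3)$ with $\varepsilon_i\in\{\pm1\}$, and record the defining feature of a \emph{double} sign change: exactly zero or two signs are flipped, so that $\varepsilon_1\varepsilon_2\varepsilon_3=1$. This one relation is what drives the entire argument, and presenting it first avoids checking the nine pairs $(\sigma,R_i)$ individually.

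For the commutation, fix $\{i,j,k\}=\{1,2,3\}$ and recall that $R_i$ replaces the $i$-th coordinate $v_i$ by $v_jv_k-v_i$ while fixing the other two. Applying $\sigma$ after $R_i$ sends the $i$-th coordinate to $\varepsilon_i(v_jv_k-v_i)$, whereas applying $R_i$ after $\sigma$ sends it to $(\varepsilon_j v_j)(\varepsilon_k v_k)-\varepsilon_i v_i=\varepsilon_j\varepsilon_k v_jv_k-\varepsilon_i v_i$; the two remaining coordinates agree trivially in both orders. Hence $\sigma R_i=R_i\sigma$ is equivalent to the coefficient identity $\varepsilon_i=\varepsilon_j\varepsilon_k$, which follows at once from $\varepsilon_1\varepsilon_2\varepsilon_3=1$ together with $\varepsilon_i^2=1$. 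This disposes of all cases uniformly.

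For the ``in particular'' clause, I would first verify that $\sigma$ maps $\Mkp$ bijectively to itself: the left-hand side $x^2+y^2+z^2$ of \eqref{def:Markoff-type equation} is invariant under any sign change, and the cubic term transforms as $xyz\mapsto\varepsilon_1\varepsilon_2\varepsilon_3\,xyz=xyz$, so $\sigma$ preserves the defining equation and, being an involution, is a bijection of the vertex set. Combined with the commutation relation, whenever $Y=R_i(X)$ we get $\sigma(Y)=\sigma R_i(X)=R_i\sigma(X)$, so $\sigma$ carries each edge (loops included) to an edge, which is precisely the statement that $\sigma$ is an automorphism of $\Gkp$.

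There is no genuine obstacle here; the only point deserving care is to isolate $\varepsilon_1\varepsilon_2\varepsilon_3=1$ as the common cause of \emph{both} the commutation (through $\varepsilon_i=\varepsilon_j\varepsilon_k$) and the invariance of $\Mkp$ (through the cubic term), so that the diagonal parametrization reduces the whole lemma to a couple of lines rather than a case analysis.
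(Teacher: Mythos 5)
Your proof is correct and is exactly the direct verification the paper has in mind: the paper offers no argument beyond the remark that the lemma ``is direct,'' and your computation is that verification written out. Isolating $\varepsilon_1\varepsilon_2\varepsilon_3=1$ as the single identity behind both the commutation relation $\varepsilon_i=\varepsilon_j\varepsilon_k$ and the invariance of the cubic term $xyz$ is a clean way to organize the check.
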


From this lemma,
$\Sigma$ naturally acts on
$\Knkpall$, $\Knkpdist$ and $\Knkpprop$ as 
\[
\sigma (K)
\coloneq
\left(\begin{array}{ccc}
\sigma(X_1),\!\! & \sigma(X_2),\!\! & \sigma(X_3)  \\
\sigma(Y_1),\!\! & \sigma(Y_2),\!\! & \sigma(Y_3)
\end{array}\right).
\]

By taking account of double sign changes, it is possible to enumerate all solutions of (\ref{for:explicit minors equation}).
\begin{theorem}
\label{thm:solutions}
For $\kappa\in\mathbb{Z}$, a prime $p>3$,
and $n\in\mathbb{Z}_{>0}$, define 
\begin{align*}
\Tnkpall&\coloneq
\left\{(\alpha,\beta)\in \Fp\times \F^{\times}_p\,\left|\,
A_n\left(\frac{\beta}{2}\right)=0,\right.\,
f_{\kappa}(\beta,\alpha)=0
\right\},\\
\Tnkpdist&\coloneq
\left\{(\alpha,\beta)\in \Tnkpall\,\left|\,
\beta^2(3-\beta)\ne \kappa,\,
\beta^2(8-\beta^2)\ne 4\kappa\right.
\right\}.
\end{align*}
Then, we have 
\begin{align}
\label{eq:Knk all}
 \Knkpall
&=\left\{\sigma (K_{(\alpha,\beta)})\,\left|\,(\alpha,\beta)\in \Tnkpall,\ \sigma\in \Sigma\right.\right\}\sqcup \mathcal{E}_{n,\kappa}(p),\\
\label{eq:Knk prime}
 \Knkpdist
&=\left\{\sigma (K_{(\alpha,\beta)})\,\left|\,(\alpha,\beta)\in \Tnkpdist,\ \sigma\in \Sigma\right.\right\},
\end{align}
where 
\[
\mathcal{E}_{n,\kappa}(p)
\coloneq
\begin{cases}
 \left\{\left(\begin{array}{ccc}
O,\!\! & O,\!\! & O \\
O,\!\! & O,\!\! & O
\end{array}\right)\right\} & \kappa=0,\\[15pt]
\left\{\left.\sigma\left(\begin{array}{ccc}
P\!\! & P\!\! & P \\
P\!\! & P\!\! & P
\end{array}\right)\,\right|\,\sigma\in\Sigma\right\}  & \kappa=4 \ \ \text{and} \ \ 2n+1\not\equiv 0\pmod{p},\\[15pt]
 \emptyset & \text{otherwise},
\end{cases}
\]
with $O\coloneq (0,0,0)$ and $P\coloneq (2,2,2)$.
\end{theorem}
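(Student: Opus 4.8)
The plan is to prove both displayed identities \eqref{eq:Knk all} and \eqref{eq:Knk prime} by a two-sided inclusion, driven throughout by the following observation: each Vieta product $R_iR_j$ fixes the $k$-th coordinate (where $\{i,j,k\}=\{1,2,3\}$) and acts on the remaining two coordinates as a linear map whose determinant is $1$ and whose trace is $s^2-2$, where $s$ is the value of the fixed $k$-th coordinate. Writing $M$ for this $2\times2$ matrix, Cayley--Hamilton gives $M^n=U_{n-1}(\tfrac{s^2-2}{2})M-U_{n-2}(\tfrac{s^2-2}{2})I$, so every equation in \eqref{for:explicit minors equation} becomes a linear relation whose coefficients are Chebyshev polynomials of the second kind evaluated at $\tfrac{s^2-2}{2}=T_2(s/2)$. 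First I would record the reduction \eqref{for:easily verified}, which lets me write $X_1=(a_1,v,w)$, $X_2=(u,b_2,w)$, $X_3=(u,v,c_3)$ with three \emph{shared} coordinates $u,v,w$ and three \emph{diagonal} coordinates $a_1,b_2,c_3$, the $Y_i=R_i(X_i)$ being then determined.

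For the inclusion $\supseteq$, I would verify directly that $K_{(\alpha,\beta)}$ with $(\alpha,\beta)\in\Tnkpall$ solves \eqref{for:explicit minors equation}. Every fixed coordinate equals $\beta$, so the relevant matrix is $N_\beta=\left(\begin{smallmatrix}-1&\beta\\-\beta&\beta^2-1\end{smallmatrix}\right)$, and the hypothesis $A_n(\beta/2)=0$ feeds in as follows. By Lemma~\ref{lem:U by A}(1),(2) it forces $U_{2n}(\beta/2)=0$ and $U_{2n-1}(\beta/2)=-1$, whence \eqref{for:recursion for U_m} gives $U_{2n-2}(\beta/2)=-\beta$ and $U_{2n-3}(\beta/2)=1-\beta^2$; via the composition identity $2y\,U_{n-1}(T_2(y))=U_{2n-1}(y)$ the two coefficients of $N_\beta^n$ collapse to $U_{n-1}(\tfrac{\beta^2-2}{2})=-1/\beta$ and $U_{n-2}(\tfrac{\beta^2-2}{2})=(1-\beta^2)/\beta$, and a one-line computation then shows $N_\beta^n$ sends the slice coordinates $(\beta,\alpha)$ of $X_2$ to the coordinates $(\beta^2-\alpha,\beta)$ of $Y_1=R_1(X_1)$. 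The remaining relations follow by the cyclic symmetry of the indices and, for the other $\sigma\in\Sigma$, from Lemma~\ref{lem-double-sign-k}; the constraint $f_\kappa(\beta,\alpha)=0$ is used only to place the six triples on the surface \eqref{def:Markoff-type equation}. The members of $\mathcal{E}_{n,\kappa}(p)$ are handled separately: the all-$O$ and all-$P$ sextuples consist of triples fixed by every $R_i$, so they solve \eqref{for:explicit minors equation} trivially.

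The inclusion $\subseteq$ is where I expect the genuine difficulty. Starting from an arbitrary solution in the parametrization above, \eqref{for:explicit minors equation} expresses each $X_j$ as an explicit image of $X_i$; composing around the $3$-cycle $1\to2\to3\to1$ yields fixed-point equations in $u,v,w,a_1,b_2,c_3$ whose coefficients are, as above, Chebyshev polynomials in the $s^2$. I would resolve this system by elimination---this is exactly where the resultant and Gr\"obner-basis computations of Section~\ref{sec:Six triples} enter---to show that, away from the parabolic locus (trace $\pm2$, i.e. $s\in\{0,\pm2\}$), one is forced to have $u^2=v^2=w^2$ with common value $\beta$ satisfying $A_n(\beta/2)=0$ and diagonal entries matching the pattern of \eqref{def:K alpha beta}. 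Since every $\sigma\in\Sigma$ preserves \eqref{def:Markoff-type equation} by Lemma~\ref{lem-double-sign-k}, the requirement that all six triples be Markoff triples pins the residual signs of $(u,v,w)$ down to a single $\Sigma$-orbit, giving $K=\sigma(K_{(\alpha,\beta)})$. The parabolic locus is treated by hand: $A_n(0)=(-1)^{\lfloor n/2\rfloor}\neq0$ (Lemma~\ref{lem:Am properties}(2)) excludes $s=0$, and since $\beta\in\Fpm$ the only surviving degenerate solutions are the fully fixed sextuples, yielding $\mathcal{E}_{n,\kappa}(p)$. The delicate point is $\kappa=4$, $s=2$: the all-$P$ sextuple corresponds to $\alpha=\beta=2$, and since $A_n(1)=2n+1$ (Lemma~\ref{lem:Am properties}(3)), the pair $(2,2)$ lies in $\Tnkpall$ \emph{precisely} when $2n+1\equiv0\pmod p$. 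Thus the all-$P$ sextuple is already counted in the main family in that case and must be omitted from $\mathcal{E}_{n,\kappa}(p)$, which explains both the congruence condition and the disjointness of the union in \eqref{eq:Knk all}.

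Finally, for \eqref{eq:Knk prime} I would translate distinctness into the two inequalities defining $\Tnkpdist$. Since $f_\kappa(\beta,\cdot)$ is a monic quadratic in its second argument with roots $\alpha$ and $\overline{\alpha}=\beta^2-\alpha$, a short case check shows the only coincidences among $X_1,X_2,X_3,Y_1,Y_2,Y_3$ come from $\alpha=\beta$ (collapsing the $X_i$, equivalently $\overline{\alpha}=\beta$ collapsing the $Y_i$) or from $\alpha=\overline{\alpha}$ (collapsing each $X_i$ onto $Y_i$). Substituting $\alpha=\beta$ into $f_\kappa(\beta,\alpha)=0$ gives $\beta^2(3-\beta)=\kappa$, and substituting $\alpha=\beta^2/2$ gives $\beta^2(8-\beta^2)=4\kappa$; excluding both is exactly $\Tnkpdist$, while the non-distinctness of every element of $\mathcal{E}_{n,\kappa}(p)$ accounts for its absence from $\Knkpdist$. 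I expect essentially all the work to concentrate in the elimination step of the $\subseteq$ direction, namely proving that no asymmetric solution survives off the symmetric locus $u^2=v^2=w^2$.
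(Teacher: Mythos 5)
Your verification of the reverse inclusion and your treatment of the distinctness conditions in \eqref{eq:Knk prime} are essentially correct and agree with the paper; in particular, your observation that $A_n(1)=2n+1$ explains the congruence $2n+1\not\equiv 0\pmod{p}$ in $\mathcal{E}_{n,\kappa}(p)$ is exactly the right explanation. The gap is in the forward inclusion of \eqref{eq:Knk all}, which you defer to an unspecified ``elimination'' and to ``the resultant and Gr\"obner-basis computations of Section~\ref{sec:Six triples}''. Those computations serve a different, downstream purpose: they parametrize $\Tnkpall$ by the univariate polynomial $B_{n,\kappa}$ and certify properness, and the explicit instances are only worked out for small $n$; they cannot classify the solutions of \eqref{for:explicit minors equation} for general $n$. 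The mechanism the paper actually uses, and which your plan lacks, is that each $Y_j$ is reached from two distinct $X_i$'s, so that after invoking \eqref{for:easily verified} the six relations pair up into $w=G'_n(\xi)v$ \emph{and} $w=G'_n(\xi)^{-1}v$ for each $\xi\in\{a_2,b_3,c_1\}$; subtracting gives $G''_n(\xi)v=0$ with $G''_n(\xi)=G'_n(\xi)-G'_n(\xi)^{-1}=U_{2n}(\xi/2)\bigl[\begin{smallmatrix}\xi&-2\\2&-\xi\end{smallmatrix}\bigr]$. This single identity linearizes the whole problem: either $G''_n(\xi)$ is invertible for some $\xi$, which kills the slice coordinates and (after a sign argument using $U_{2n}(0)=(-1)^n$ to exclude the subcase $(\frac{\kappa}{p})=1$) produces only the all-$O$ sextuple for $\kappa=0$; or $U_{2n}(\xi/2)=0$ or $\xi=\pm2$ for every $\xi$, and a case analysis over which of $a_2,b_3,c_1$ falls in which bucket yields the $\Sigma$-orbit of $K_{(\alpha,\beta)}$ (all three in $Z_n$), the all-$P$ sextuple for $\kappa=4$ (all three equal to $\pm 2$), and contradictions in the mixed cases via $U_{2n-1}(\pm1)=\pm 2n\ne\mp1$.

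Two concrete symptoms of the missing idea. First, your dichotomy is inverted: you place the $K_{(\alpha,\beta)}$ family ``away from the parabolic locus'' and the degenerate sextuples on it, whereas in fact the generic (nonsingular) branch yields only the all-$O$ solution, the entire $K_{(\alpha,\beta)}$ family lives exactly on the locus $U_{2n}(\xi/2)=0$, and the mixed cases (some $\xi$ in $Z_n$, others equal to $\pm2$) require separate refutation --- none of which is visible in your sketch. Second, the fact that the sign pattern on $(u,v,w)$ is a \emph{double} sign change, i.e.\ $\varepsilon_{a_2}\varepsilon_{b_3}\varepsilon_{c_1}=1$ so that one lands in a $\Sigma$-orbit rather than all of $\{\pm1\}^3$, is forced by the linear relations $a_1=\varepsilon_{b_3}\varepsilon_{c_1}\varepsilon_{a_2}a_1$ and $a_2=\varepsilon_{b_3}\varepsilon_{a_2}\varepsilon_{c_1}a_2$ together with $a_2\ne 0$, not by membership in $\Mkp$ as you assert; the argument via the Markoff equation alone would leave the case $\alpha=0$ open. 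Until the forward inclusion is carried out along these (or equivalent) lines, the classification --- and in particular the exact shape of $\mathcal{E}_{n,\kappa}(p)$ --- is not established.
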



To prove the theorem, 
we employ the matrix representation of the composition $R_iR_j$.
For any $\{i,j,k\}=\{1,2,3\}$, 
write $R_j(x_1,x_2,x_3)=(z_1,z_2,z_3)$ and 
$R_iR_j(x_1,x_2,x_3)=(y_1,y_2,y_3)$.
Then, since $R_{i}$ changes only the $i$-th coordinate of Markoff triples, we observe that 
$z_j=x_j$, $z_k=x_k$, $y_k=x_k$ and 
\begin{align}
\label{for:RiRj matrix transformation}
\left[
\begin{array}{cc}
 z_i \\
 z_j 
\end{array}
\right]
=
\left[
\begin{array}{cc}
 1 & 0 \\
 x_k & -1 
\end{array}
\right]
\left[
\begin{array}{cc}
 x_i \\
 x_j 
\end{array}
\right],
\quad 
\left[
\begin{array}{cc}
 y_i \\
 y_j 
\end{array}
\right]
=
 R(x_k)
\left[
\begin{array}{cc}
 x_i \\
 x_j 
\end{array}
\right],
\end{align}
where 
\[
R(x)
\coloneq
\left[
\begin{array}{cc}
 x^2-1 & -x \\
 x & -1 
\end{array}
\right]
=\left[
\begin{array}{cc}
 x & -1 \\
 1 & 0 
\end{array}
\right]^2
\in\SL_2(\F_p).
\]
More generally, 
for any $m\in\mathbb{Z}_{\ge 0}$, 
the matrix expressions of $(R_iR_j)^m$ and $R_j(R_iR_j)^m$ 
can be given in terms of the Chebyshev polynomials of the second kind as follows.

\begin{lemma}
\label{lem:F G}
Let $m\in\mathbb{Z}_{\ge 0}$ and $\{i,j,k\}=\{1,2,3\}$. 
For $(x_1,x_2,x_3)\in \Mkp$, write
\[
(R_iR_j)^m(x_1,x_2,x_3)
=\left(y^{(m)}_1,y^{(m)}_2,y^{(m)}_3\right), \quad 
R_j(R_iR_j)^m(x_1,x_2,x_3)
=\left(z^{(m)}_1,z^{(m)}_2,z^{(m)}_3\right).
\]
Then, we have $y^{(m)}_k=z^{(m)}_k=x_k$ and 
\begin{align*}
\left[
\begin{array}{c}
 y^{(m)}_i \\ 
 y^{(m)}_j
\end{array}
\right]
=F_m(x_k)
\left[
\begin{array}{c}
 x_i \\ 
 x_j
\end{array}
\right], \quad 
\left[
\begin{array}{c}
 z^{(m)}_i \\ 
 z^{(m)}_j
\end{array}
\right]
=G_m(x_k)
\left[
\begin{array}{c}
 x_i \\ 
 x_j
\end{array}
\right],
\end{align*}
where 
\begin{align*}
F_m(x)
\coloneq
\left[
\begin{array}{cc}
 U_{2m}\left(x/2\right) & 
 -U_{2m-1}\left(x/2\right) \\
 U_{2m-1}\left(x/2\right) & 
 -U_{2m-2}\left(x/2\right)
\end{array}
\right],\quad 
G_m(x)
\coloneq
\left[
\begin{array}{cc}
 U_{2m}\left(x/2\right) & 
 -U_{2m-1}\left(x/2\right) \\
 U_{2m+1}\left(x/2\right) & 
 -U_{2m}\left(x/2\right)
\end{array}
\right].
\end{align*}
\end{lemma}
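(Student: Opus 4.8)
The plan is to prove both matrix formulas simultaneously by induction on $m$, using the single-step matrix representation already derived in \eqref{for:RiRj matrix transformation} together with the Chebyshev recurrence \eqref{for:recursion for U_m}. First I would observe that the statement $y^{(m)}_k=z^{(m)}_k=x_k$ is immediate: $R_i$ and $R_j$ both fix the $k$-th coordinate, so any word in $R_i,R_j$ leaves $x_k$ untouched. This reduces the problem to tracking the two-dimensional vector $(x_i,x_j)^{\mathrm{t}}$ under repeated application of the matrix $R(x_k)$ appearing in \eqref{for:RiRj matrix transformation}, since one application of $R_iR_j$ acts on $(x_i,x_j)^{\mathrm{t}}$ precisely by $R(x_k)$. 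Thus $(y^{(m)}_i,y^{(m)}_j)^{\mathrm{t}}=R(x_k)^m(x_i,x_j)^{\mathrm{t}}$, and the core claim becomes the clean matrix identity $F_m(x)=R(x)^m$, with an analogous identity $G_m(x)=B(x)R(x)^m$ for the one extra involution, where $B(x)=\left[\begin{smallmatrix}1&0\\x&-1\end{smallmatrix}\right]$ is the single-step matrix of $R_j$ from \eqref{for:RiRj matrix transformation}.

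The heart of the argument is therefore to verify $R(x)^m=F_m(x)$ for all $m\ge 0$. I would check the base case $m=0$ directly (here $F_0(x)$ should be the identity, using $U_0=1$, $U_{-1}=0$, $U_{-2}=-1$ from the extended definitions), and then run the induction step $R(x)^{m+1}=R(x)\,R(x)^m=R(x)F_m(x)$. Writing out this product entrywise, each entry of the resulting matrix is a combination of the form $2x/2\cdot U_{\ell}(x/2)-U_{\ell-1}(x/2)$ or similar, which collapses to a single Chebyshev polynomial of shifted index via the recurrence $U_{\ell+1}(t)=2tU_\ell(t)-U_{\ell-1}(t)$ with $t=x/2$. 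Matching indices then yields exactly $F_{m+1}(x)$. The formula for $G_m(x)$ follows either by the parallel computation $B(x)R(x)^m$ or by noting $G_m(x)=B(x)F_m(x)$ and simplifying the product using the same one-term recurrence; the shifted index $2m+1$ in the bottom row of $G_m$ is precisely what the multiplication by $B(x)$ produces.

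The main obstacle I anticipate is purely bookkeeping: getting the index shifts and signs correct across the negative-index conventions $U_{-1}(x)=0$ and $U_{-2}(x)=-1$, so that the base case and the induction dovetail without an off-by-one error in the subscripts of $F_m$ and $G_m$. In particular I would be careful that the entry $-U_{2m-2}(x/2)$ in $F_m$ and the entry $U_{2m+1}(x/2)$ in $G_m$ are produced with the right sign after the matrix multiplication. None of this is conceptually hard, but the sign and index tracking is where an error would most plausibly creep in, so I would verify the recurrence step entry-by-entry rather than relying on pattern-matching.
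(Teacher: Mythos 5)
Your proposal is correct and follows essentially the same route as the paper: the paper also proves $R(x)^m=F_m(x)$ by induction on $m$ using the recurrence \eqref{for:recursion for U_m}, and then obtains $G_m(x)=\left[\begin{smallmatrix}1&0\\ x&-1\end{smallmatrix}\right]F_m(x)$ from the single-step matrix of $R_j$ in \eqref{for:RiRj matrix transformation}. The index and sign bookkeeping you flag does work out, including the base case via the conventions $U_{-1}=0$, $U_{-2}=-1$.
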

\begin{proof}
One can prove by induction on $m$, 
in conjunction with \eqref{for:recursion for U_m}, 
that 
\[
 R(x)^m=
\left[
\begin{array}{cc}
 U_{2m}\left(x/2\right) & 
 -U_{2m-1}\left(x/2\right) \\
 U_{2m-1}\left(x/2\right) & 
 -U_{2m-2}\left(x/2\right)
\end{array}
\right].
\]
Hence the claims follows from \eqref{for:RiRj matrix transformation} by noting that  
$F_m(x)=R(x)^m$ 
and $G_m(x)=\left[
\begin{array}{cc}
 1 & 0 \\ 
 x & -1
\end{array}
\right]F_m(x)$.

\end{proof}

\begin{proof}
[Proof of Theorem~\ref{thm:solutions}]  
(1) Let $\widetilde{\mathcal{K}}^{\text{all}}_{n,\kappa}(p)$
denote the right-hand side of \eqref{eq:Knk all}.
We first show that  $\Knkpall\subset\widetilde{\mathcal{K}}^{\text{all}}_{n,\kappa}(p)$.
Take  
$K=\left(\begin{array}{ccc}
X_1,\!\! & X_2,\!\! & X_3  \\
Y_1,\!\! & Y_2,\!\! & Y_3
\end{array}\right)\in\Knkpall$.
By virtue of Lemma~\ref{lem:F G},
\eqref{for:explicit minors equation 2} can be rewritten as follows:  
\begin{alignat*}{2}
\left[
\begin{array}{c}
 a_2 \\
 a_1
\end{array}
\right]
&=
G_n(b_3)
\left[
\begin{array}{c}
 b_2 \\
 b_1
\end{array}
\right], 
\quad
&& \left[
\begin{array}{c}
 a_3 \\
 a_1
\end{array}
\right]
=
G_n(c_2)
\left[
\begin{array}{c}
 c_3 \\
 c_1
\end{array}
\right]
, \\
\left[
\begin{array}{c}
 b_3 \\
 b_2
\end{array}
\right]
&=
G_n(c_1)
\left[
\begin{array}{c}
 c_3 \\
 c_2
\end{array}
\right], 
\quad 
&& \left[
\begin{array}{c}
 b_1 \\
 b_2
\end{array}
\right]
=
G_n(a_3)
\left[
\begin{array}{c}
 a_1 \\
 a_2
\end{array}
\right]
, \\
\left[
\begin{array}{c}
 c_1 \\
 c_3
\end{array}
\right]
&=
G_n(a_2)
\left[
\begin{array}{c}
 a_1 \\
 a_3
\end{array}
\right], 
\quad 
&& \left[
\begin{array}{c}
 c_2 \\
 c_3
\end{array}
\right]
=
G_n(b_1)
\left[
\begin{array}{c}
 b_2 \\
 b_3
\end{array}
\right]
. 
\end{alignat*}
In view of \eqref{for:easily verified},
this is equivalent to 
\begin{equation}
\label{for:solutions by Chebyshev}
\begin{alignedat}{2}
\left[
\begin{array}{c}
 c_3 \\
 c_1
\end{array}
\right]
&= G'_n(a_2)
\left[
\begin{array}{c}
 a_1 \\
 a_3
\end{array}
\right],
\qquad
& G''_n(a_2)
\left[
\begin{array}{c}
 a_1 \\
 a_3
\end{array}
\right]
&=
\left[
\begin{array}{c}
 0 \\
 0
\end{array}
\right], \\
\left[
\begin{array}{c}
 a_1 \\
 a_2
\end{array}
\right]
&= G'_n(b_3)
\left[
\begin{array}{c}
 b_2 \\
 b_1
\end{array}
\right],
\qquad
& G''_n(b_3)
\left[
\begin{array}{c}
 b_2 \\
 b_1
\end{array}
\right]
&=
\left[
\begin{array}{c}
 0 \\
 0
\end{array}
\right], \\
\left[
\begin{array}{c}
 b_2 \\
 b_3
\end{array}
\right]
&= G'_n(c_1)
\left[
\begin{array}{c}
 c_3 \\
 c_2
\end{array}
\right],
\qquad
& G''_n(c_1)
\left[
\begin{array}{c}
 c_3 \\
 c_2
\end{array}
\right]
&=
\left[
\begin{array}{c}
 0 \\
 0
\end{array}
\right],
\end{alignedat}
\end{equation}
where
\begin{align*}
G'_n(x)
&\coloneq
\left[
\begin{array}{cc}
 0 & 1 \\
 1 & 0
\end{array}
\right]
 G_n(x)
=\left[
\begin{array}{cc}
 U_{2n+1}\left(x/2\right) & 
 -U_{2n}\left(x/2\right) \\
 U_{2n}\left(x/2\right) & 
 -U_{2n-1}\left(x/2\right)
\end{array}
\right]
\in\SL_2(\F_p),\\
 G''_n(x)
&\coloneq
G'_n(x)-G'_n(x)^{-1}
=U_{2n}\left(\frac{x}{2}\right)
\left[
\begin{array}{cc}
 x & -2 \\
 2 & -x
\end{array}
\right].
\end{align*}

We first find solutions to \eqref{for:explicit minors equation}
in the case where $G''_n(\xi)$ is non-singular for some 
$\xi\in\{a_2,b_3,c_1\}$.
For instance, assume that $G''_n(a_2)$ is non-singular
(the other cases are completely similar).
Then, from \eqref{for:solutions by Chebyshev}, we have 
$a_1=a_3=c_3=c_1=0$, which implies 
$a_2^2=c_2^2=\kappa$ by \eqref{def:Markoff-type equation},
and $b_2=-U_{2n}(0)c_2=(-1)^{n+1}c_2$.
If $\kappa=0$ in $\mathbb{F}_p$, then we have $a_2=0$ and hence 
$X_1=X_2=X_3=Y_1=Y_2=Y_3=O$ with $O=(0,0,0)$.
If $(\frac{\kappa}{p})=-1$,
then there are no solutions to \eqref{for:solutions by Chebyshev}.
Finally, if $(\frac{\kappa}{p})=1$,
then we have $a_2=c_2=\pm\sqrt{\kappa}$ 
and hence $X_1=(0,\pm\sqrt{\kappa},0)=Y_1$.
On the other hand, since 
$X_2=(0,\pm(-1)^{n+1}\sqrt{\kappa},0)$,
we have 
$Y_1=(R_2R_1)^n(X_2)=(0,\mp\sqrt{\kappa},0)$, 
which contradicts the earlier expression of $Y_1$.
Therefore, 
\eqref{for:solutions by Chebyshev} has a solution only when $\kappa=0$ in $\mathbb{F}_p$, which leads to  
$K=\left(\begin{array}{ccc}
O\!\! & O\!\! & O \\
O\!\! & O\!\! & O
\end{array}\right)$.

We next consider the case where 
$G''_n(\xi)$ is singular,
equivalently, 
\begin{equation}
\label{for:necessary condition 1}
 \xi\in Z_n\coloneq
 \left\{x\in\F_p\,\left|\,U_{2n}\left(\frac{x}{2}\right)=0\right.\right\} \quad \text{or} \quad \xi=\pm 2,
\end{equation}
for all $\xi\in\{a_2,b_3,c_1\}$.
This in particular implies that $a_2,b_3,c_1\ne 0$ 
by Lemma~\ref{lem:T_m and U_m properties} (2).
Notice from 
Lemma~\ref{lem:T_m and U_m properties} (3) 
that $\pm 2\in Z_n$ if and only if $2n+1\equiv 0 \pmod{p}$.
Now, consider the equation
\begin{align}
\label{for:desired equation}
\left[
\begin{array}{c}
u \\
v
\end{array}
\right]
=
G'_n(\xi)\left[
\begin{array}{c}
 s \\
 t
\end{array}
\right]
=G'_n(\xi)^{-1}\left[
\begin{array}{c}
 s \\
 t
\end{array}
\right],
\end{align}
which is derived from \eqref{for:solutions by Chebyshev}.



\begin{itemize}
\item 
When $\xi\in Z_n$, it follows from Lemma~\ref{lem:U by A} that 
either $A_{n}(\xi/2)=0$ or $A_{n}(-\xi/2)=0$ hold.
Define $\varepsilon_{\xi}\in\{\pm 1\}$ 
by $A_{n}(\varepsilon_{\xi}\xi/2)=0$. 
Then, Lemma~\ref{lem:U by A} again implies that  
\begin{equation}
\label{for:U_2n+1_2n_2n-1} 
\left(U_{2n+1}\left(\frac{\xi}{2}\right),
U_{2n}\left(\frac{\xi}{2}\right),
U_{2n-1}\left(\frac{\xi}{2}\right)\right)
=(\varepsilon_{\xi},0,-\varepsilon_{\xi}).    
\end{equation}
This leads to $G'_n(\xi)=G'_n(\xi)^{-1}=\varepsilon_{\xi} I_2$,
where $I_2$ is the $2$-by-$2$ identity matrix.
Therefore \eqref{for:desired equation} yields 
$\left[
\begin{array}{c}
u \\
v
\end{array}
\right]
=\varepsilon_{\xi}
\left[
\begin{array}{c}
s \\
t
\end{array}
\right]$.
\item
When $\xi=\pm 2$ with 
$2n+1\not\equiv 0 \pmod{p}$, 
let $\sgn(\xi)\coloneq\xi/2\in\{\pm 1\}$.
By Lemma~\ref{lem:T_m and U_m properties} (3), we have 
\[
 G'_n(\xi)
=\left[
\begin{array}{cc}
 \sgn(\xi) (2n+2) & -(2n+1) \\
 2n+1 & -\sgn(\xi) 2n
\end{array}
\right].
\]
Thus, \eqref{for:desired equation} implies that  
$\left[
\begin{array}{c}
u \\
v
\end{array}
\right]
=t
\left[
\begin{array}{c}
1 \\
\sgn(\xi)
\end{array}
\right]$
.
\end{itemize}
Based on these observations, 
the solutions to \eqref{for:solutions by Chebyshev} 
are obtained as follows.

\begin{itemize}
\item
When $a_2,b_3,c_1\in Z_n$, 
from \eqref{for:solutions by Chebyshev}, we have  
\begin{align*}
\begin{split}
\left[
\begin{array}{c}
 c_3 \\
 c_1
\end{array}
\right]
=\varepsilon_{a_2}
\left[
\begin{array}{c}
 a_1 \\
 b_3
\end{array}
\right],
\quad
\left[
\begin{array}{c}
 a_1 \\
 a_2
\end{array}
\right]
=\varepsilon_{b_3}
\left[
\begin{array}{c}
 b_2 \\
 c_1
\end{array}
\right],
\quad 
\left[
\begin{array}{c}
 b_2 \\
 b_3
\end{array}
\right]
=\varepsilon_{c_1}
\left[
\begin{array}{c}
 c_3 \\
 a_2
\end{array}
\right].
\end{split}
\end{align*}
These imply that
\begin{equation}
\label{for:ab}
a_1
=\varepsilon_{b_3} b_2
=\varepsilon_{b_3} \varepsilon_{c_1} c_3
=\varepsilon_{b_3} \varepsilon_{c_1} \varepsilon_{a_2} a_1,
\quad 
a_2
=\varepsilon_{b_3} c_1
=\varepsilon_{b_3} \varepsilon_{a_2} b_3
=\varepsilon_{b_3} \varepsilon_{a_2} \varepsilon_{c_1} a_2,
\end{equation}
and thus $\varepsilon_{a_2} \varepsilon_{b_3} \varepsilon_{c_1}=1$ 
since $a_2\ne 0$.
By setting $\beta=\varepsilon_{a_2} a_2$, 
which means that $A_n(\beta/2)=0$, 
we can write $(a_2,b_3,c_1)
=(\varepsilon_{a_2} \beta,
\varepsilon_{b_3} \beta,
\varepsilon_{c_1} \beta)$.
Moreover, if we put $\alpha=\varepsilon_{c_1}a_1$,
then we have $f_{\kappa}(\beta,\alpha)=0$
and from \eqref{for:ab}
$(a_1,b_2,c_3)
=(\varepsilon_{c_1} \alpha,
\varepsilon_{a_2} \alpha,
\varepsilon_{b_3} \alpha)$.
Therefore, the matrix $K$ is expressed as 
\begin{align*}
K=
\left(\begin{array}{ccc}
(\varepsilon_{c_1} \alpha,\varepsilon_{a_2} \beta, \varepsilon_{b_3} \beta),\!\! &
(\varepsilon_{c_1} \beta,\varepsilon_{a_2} \alpha, \varepsilon_{b_3} \beta),\!\! & 
(\varepsilon_{c_1} \beta,\varepsilon_{a_2} \beta, \varepsilon_{b_3} \alpha) \\
(\varepsilon_{c_1} \overline{\alpha},\varepsilon_{a_2} \beta, \varepsilon_{b_3} \beta),\!\! & (\varepsilon_{c_1} \beta,\varepsilon_{a_2} \overline{\alpha}, \varepsilon_{b_3} \beta),\!\! & (\varepsilon_{c_1} \beta,\varepsilon_{a_2} \beta, \varepsilon_{b_3} \overline{\alpha})
\end{array}\right).
\end{align*}
This is clearly obtained from $K_{(\alpha,\beta)}$ 
by the double sign change 
$(x,y,z) \mapsto (\varepsilon_{c_1}x,\varepsilon_{a_2}y,\varepsilon_{b_3}z)$ 
\item
When $a_2,b_3\in Z_n$ and $c_1=\pm 2$ with 
$2n+1\not\equiv 0 \pmod{p}$,
it follows from \eqref{for:solutions by Chebyshev} that   
\begin{align*}
\begin{split}
\left[
\begin{array}{c}
 c_3 \\
 c_1
\end{array}
\right]
&=\varepsilon_{a_2}
\left[
\begin{array}{c}
 a_1 \\b_3
\end{array}
\right],
\quad 
\left[
\begin{array}{c}
 a_1 \\
 a_2
\end{array}
\right]
=\varepsilon_{b_3}
\left[
\begin{array}{c}
 b_2 \\
 c_1
\end{array}
\right],
\quad 
\left[
\begin{array}{c}
 b_2 \\
 b_3
\end{array}
\right]
=a_2
\left[
\begin{array}{c}
 1 \\
 \sgn(c_1)
\end{array}
\right].
\end{split}
\end{align*}
These imply 
$c_1
=\varepsilon_{a_2} b_3
=\varepsilon_{a_2} \sgn(c_1) a_2$, 
which yields $a_2/2=\varepsilon_{a_2}$.
Therefore, 
by Lemma~\ref{lem:T_m and U_m properties} (3), 
we have 
$U_{2n-1}(a_2/2)=U_{2n-1}(\varepsilon_{a_2})=\varepsilon_{a_2} 2n$. 
However, this contradicts the fact that
$U_{2n-1}(a_2/2)=-\varepsilon_{a_2}$ from \eqref{for:U_2n+1_2n_2n-1}, given that $2n+1\not\equiv 0 \pmod{p}$.
The cases $b_3,c_1\in Z_n$ and $a_2=\pm 2$, and 
$c_1,a_2\in Z_n$ and $b_3=\pm 2$ are similar.

\item
When $a_2\in Z_n$ and $b_3,c_1=\pm 2$ with 
$2n+1\not\equiv 0 \pmod{p}$, 
it follows from \eqref{for:solutions by Chebyshev} that   
\begin{align*}
\begin{split}
\left[
\begin{array}{c}
 c_3 \\
 c_1
\end{array}
\right]
=\varepsilon_{a_2}
\left[
\begin{array}{c}
 a_1 \\
 b_3
\end{array}
\right],\
\quad 
\left[
\begin{array}{c}
 a_1 \\
 a_2
\end{array}
\right]
=c_1
\left[
\begin{array}{c}
 1 \\
 \sgn(b_3)
\end{array}
\right],
\quad 
\left[
\begin{array}{c}
 b_2 \\
 b_3
\end{array}
\right]
=a_2
\left[
\begin{array}{c}
 1 \\
 \sgn(c_1)
\end{array}
\right].
\end{split}    
\end{align*}
These again imply 
$c_1=\varepsilon_{a_2} b_3
=\varepsilon_{a_2} \sgn(c_1)a_2$,
which leads to the same contradiction as in the previous case.
The cases $b_3\in Z_n$ and $c_1,a_2=\pm 2$, 
and $c_1\in Z_n$ and $a_2,b_3=\pm 2$ are similar.

\item
When $a_2,b_3,c_1=\pm 2$ with 
$2n+1\not\equiv 0 \pmod{p}$, 
it follows from \eqref{for:solutions by Chebyshev} that   
\begin{align*}
\begin{split}
\left[
\begin{array}{c}
 c_3 \\
 c_1
\end{array}
\right]
=b_3
\left[
\begin{array}{c}
 1 \\
 \sgn(a_2)
\end{array}
\right],
\quad 
\left[
\begin{array}{c}
 a_1 \\
 a_2
\end{array}
\right]
=c_1
\left[
\begin{array}{c}
 1 \\
 \sgn(b_3)
\end{array}
\right],
\quad 
\left[
\begin{array}{c}
 b_2 \\
 b_3
\end{array}
\right]
=a_2
\left[
\begin{array}{c}
 1 \\
 \sgn(c_1)
\end{array}
\right].
\end{split}    
\end{align*}
These imply $(a_1,b_2,c_3)=(c_1,a_2,b_3)$ and $\sgn(a_2)\sgn(b_3)\sgn(c_1)=1$.
Since $a_1^2+a_2^2+a_3^2-a_1a_2a_3-\kappa=c_1^2+a_2^2+b_3^2-c_1a_2b_3-\kappa=4-\kappa$,
there are no solutions to \eqref{for:explicit minors equation} 
unless $\kappa=4$. 
When $\kappa=4$, 
we find that $K$ is a double sign change of
$
\left(\begin{array}{ccc}
P\!\! & P\!\! & P \\
P\!\! & P\!\! & P
\end{array}\right)
$, where $P=(2,2,2)$.
\end{itemize}
This completes the proof of 
$\Knkpall\subset\widetilde{\mathcal{K}}^{\text{all}}_{n,\kappa}(p)$.
For the reverse inclusion
$\Knkpall\supset\widetilde{\mathcal{K}}^{\text{all}}_{n,\kappa}(p)$,
it suffices to verify that
$K_{(\alpha,\beta)}$ with $(\alpha,\beta)\in \Tnkpall$ 
satisfies \eqref{for:solutions by Chebyshev}, that is, 
\[
\left[
\begin{array}{c}
 \alpha \\
 \beta
\end{array}
\right]
=G'_n(\beta)
\left[
\begin{array}{c}
 \alpha \\
 \beta
\end{array}
\right],
\qquad 
G''_n(\beta)
\left[
\begin{array}{c}
 \alpha \\
 \beta
\end{array}
\right]
=
\left[
\begin{array}{c}
 0 \\
 0
\end{array}
\right].
\]
These relations indeed hold because 
$G'_n(\beta)=I_2$ and $G'_n(\beta)=O_2$,
where $O_2$ is the $2$-by-$2$ zero matrix.

(2) To study $\Knkpdist$, 
it suffices to determine when any coincidence occurs among the triples in
$K_{(\alpha,\beta)}$ for $(\alpha,\beta)\in \Tnkpall$. 
To see this, we first note that, under the condition $f_{\kappa}(\beta,\alpha)=0$, 
we have $(\beta-\alpha)(\beta-\overline{\alpha})=0$ if and only if $\beta^2(3-\beta)=\kappa$, and 
$\alpha=\overline{\alpha}$ if and only if
$\beta^2(8-\beta^2)=4\kappa$.
Since these conditions are equivalent to $X_i=X_j$ or $Y_i=Y_j$ for some $1\le i\ne j\le 3$, and $X_i=Y_i$ for some $1\le i\le 3$, respectively, 
it follows that $K_{(\alpha,\beta)}\in \Knkpdist$ if and only if $\beta^2(3-\beta)\ne \kappa$ and $\beta^2(8-\beta^2)\ne 4\kappa$, 
which is equivalent to $(\alpha,\beta)\in\Tnkpdist$.
Finally, we observe that $X_i=Y_j$ for some $1\le i\ne j\le 3$
implies $\beta=\alpha=\overline{\alpha}$,
which occurs only when $\kappa=4$.
In this specific case, 
we have $\beta=\alpha=\overline{\alpha}=2$,
which implies $K_{(\alpha,\beta)}\notin\Knkpdist$.
This completes the proof.
\end{proof}

\begin{remark}
\label{rem:Cayley-cubic}
When $\kappa=4$, 
for which \eqref{def:Markoff-type equation} defines a degenerate surface, 
we observe that $\mathcal{K}^{\text{prop}}_{n,4}(p)=\emptyset$ even though $\mathcal{K}^{\text{all}}_{n,4}(p)\ne\emptyset$.
Actually, take any 
$K_{(\alpha,\beta)}\in\mathcal{K}^{\text{all}}_{n,4}(p)$
for some $(\alpha,\beta)\in T^{\text{all}}_{n,4}(p)$.
Since $f_4(\beta,\alpha)=(\alpha-2)(\overline{\alpha}-2)$,
it follows that $\alpha=2$ or $\overline{\alpha}=2$.
If $\alpha=2$, then $X_1=(2,\beta,\beta)$,
which implies $R_2(X_1)=R_3(X_1)=X_1$. 
This means that $Y_1=R_1R_2(X_1)=R_1R_3(X_1)$
lies on both the $X_1$-$Y_2$-path and the $X_1$-$Y_3$-path, 
and therefore $K_{(\alpha,\beta)}$ is not proper.
The case $\overline{\alpha}=2$ is similar.  
For this reason, throughout this paper, we assume $\kappa \neq 4$. 
\end{remark}


\subsection{Rewriting the conditions for $\Tnkpall$}



To study the system of algebraic equations in $\Tnkpall$,
let 
\[
\Inkp
\coloneq\left\langle A_n\left(\frac{x}{2}\right),f_{\kappa}(x,y)
\right\rangle
\]
denote the ideal of $\Fp[x,y]$ generated by 
$A_n(x/2)$ and $f_{\kappa}(x,y)$.
To study $\Tnkpall$,
we need a good basis of $\Inkp$.
To do that, we recall the definition of the resultant of two polynomials:
Let $R$ be a commutative ring.
For polynomials $f(x)=\sum^{n}_{i=0}a_ix^i$ 
and $g(x)=\sum^{m}_{i=0}b_ix^i$ in $R[x]$,
the resultant $\Res(f,g)\in R$ of $f$ and $g$
is defined by  
\[
\Res(f,g)
\coloneq
\left|
\begin{array}{ccccccc}
a_n & a_{n-1} & \cdots & a_0 & & &  \\
 & a_n & a_{n-1} & \cdots & a_0 & &  \\
 & & \ddots & & & \ddots &  \\
 & & & a_n & a_{n-1} & \cdots & a_0  \\
 \hline 
b_m & b_{m-1} & \cdots & b_0 & & &  \\
 & b_m & b_{m-1} & \cdots & b_0 & &  \\
 & & \ddots & & & \ddots &  \\
 & & & b_m & b_{m-1} & \cdots & b_0
\end{array}
\right|.
\]
Now, define  
\begin{align*}
B_{n,\kappa}(y)
&\coloneq\Res_x\left(A_n\left(\frac{x}{2}\right),f_{\kappa}(x,y)\right)
=\left|
\begin{array}{ccccccc}
a_{n,0} & a_{n,1} & \cdots & a_{n,n-1} & a_{n,n}
\\
& a_{n,0} & a_{n,1} & \cdots & a_{n,n-1} & a_{n,n}
\\[5pt]
\hline 
 2-y & 0 & y^2-\kappa &  \\
 & 2-y & 0 & y^2-\kappa & \\
 & & \ddots & \ddots & \ddots \\
 & & & 2-y & 0 & y^2-\kappa & \\
\end{array}
\right|,
\end{align*} 
where $\Res_x$ denotes the resultant with respect to the variable $x$, 
and the coefficients $a_{n,j}$ are defined as in \eqref{for:explicit expression of Am}.
Note that $B_{n,\kappa}(y)$ 
is a monic polynomial in $y$ of degree $2n$.
The following proposition enables us to obtain $(\alpha,\beta)\in\Tnkpall$ by solving the equation $B_{n,\kappa}(y)=0$.

\begin{prop} 
Let $\kappa\in\mathbb{Z}\setminus\{4\}$ 
and $n\in\mathbb{Z}_{>0}$ satisfying 
$2n+1\not\equiv 0 \pmod{p}$.
Define 
\begin{equation}
\eta_{n,\kappa}
\coloneq \Res\left(A_n\left(\frac{x}{2}\right),
x^4-8x^2+4\kappa\right)\in\Fp.   
\end{equation}
If $\eta_{n,\kappa}\ne 0$,
then there exists a unique polynomial $C_{n,\kappa}(y)\in\Fp[y]$ of degree $<2n$ such that 
\begin{equation}
\label{for:grobner basis shape type}
\Inkp=\left\langle B_{n,\kappa}(y),x-\frac{1}{(\kappa-4)^{n-1}}C_{n,\kappa}(y)\right\rangle.
\end{equation}
\end{prop}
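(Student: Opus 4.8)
The plan is to recognize the claimed identity as an instance of the Shape Lemma for the zero-dimensional ideal $\Inkp=\langle A_n(x/2),f_{\kappa}(x,y)\rangle$, exploiting the crucial structural feature that $f_{\kappa}$ is even in $x$: writing $f_{\kappa}(x,y)=(2-y)x^2+(y^2-\kappa)$, the relation $f_{\kappa}=0$ determines $x^2$ but not $x$, so the role of $A_n(x/2)$ is to pin down the sign and thereby express $x$ as a function of $y$. Concretely I will (i) show $B_{n,\kappa}(y)\in\Inkp$; (ii) produce $h(y)\in\Fp[y]$ with $\deg h<2n$ and $x-h(y)\in\Inkp$; (iii) prove $B_{n,\kappa}$ is separable; and (iv) combine these to get $\Inkp=\langle B_{n,\kappa}(y),\,x-h(y)\rangle$, after which $C_{n,\kappa}:=(\kappa-4)^{n-1}h$ does the job. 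Throughout I use that $\kappa\not\equiv4\pmod p$, which is forced in the regime of interest and is exactly what makes the scalar $(\kappa-4)^{n-1}$ a unit; note $B_{n,\kappa}(2)=\prod_{A_n(\beta/2)=0}f_{\kappa}(\beta,2)=(4-\kappa)^n$, so this also guarantees $y=2$ is not a root of $B_{n,\kappa}$.

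For (i), $B_{n,\kappa}=\Res_x(A_n(x/2),f_{\kappa})$ lies in $\Inkp$ by the standard cofactor representation of the resultant; hence $A:=\Fp[x,y]/\Inkp$ is Artinian (both $x$ and $y$ are integral over $\Fp$). By the value of $B_{n,\kappa}(2)$ above, $2-y$ vanishes at no point of the finite variety and is therefore a unit in $A$, so modulo $\Inkp$ we may write $x^2\equiv\rho(y):=(\kappa-y^2)/(2-y)$. Splitting $A_n(x/2)=P(x^2)+xQ(x^2)$ into even and odd parts and reducing, the relation $A_n(x/2)\equiv0$ becomes $xQ(\rho)\equiv-P(\rho)$ in $A$. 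The key point is that $Q(\rho)$ is a \emph{unit} in $A$: it could vanish at a point $(\beta,\alpha)$ only if $Q(\beta^2)=0$, which with $A_n(\beta/2)=0$ forces $P(\beta^2)=0$ and hence $A_n(-\beta/2)=0$, i.e.\ $\beta$ and $-\beta$ are both roots of $A_n(x/2)$; this is impossible because $A_n(x/2)$ and $A_n(-x/2)$ are coprime modulo $p$. Inverting $Q(\rho)$ gives $x\equiv-P(\rho)Q(\rho)^{-1}$, and reducing the right-hand side modulo $B_{n,\kappa}$ yields $h(y)$ of degree $<2n$ with $x-h(y)\in\Inkp$; thus $\langle B_{n,\kappa},\,x-h\rangle\subseteq\Inkp$.

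The coprimality of $A_n(x/2)$ and $A_n(-x/2)$ used above must itself be established: over $\mathbb{Q}$ the roots of $A_n(x/2)$ are the numbers $2\cos(2\pi j/d)$ with $d\mid 2n+1$, $d\ge3$, and a short parity comparison of the angles shows none of their negatives is again of this form, so the two polynomials are coprime in $\mathbb{Q}[x]$; their resultant is therefore a nonzero integer, and a direct evaluation (or the relation $D(fg)=D(f)D(g)\Res(f,g)^2$ applied to Lemma~\ref{lem:U by A}(1) together with Lemma~\ref{lem:Am properties}(4)) shows it is $\pm$ a power of $2$, hence a unit modulo any $p>2$. The same input drives (iii): a repeated root of $B_{n,\kappa}=\prod_\beta f_{\kappa}(\beta,y)$ can only come from two roots $\beta,-\beta$ of $A_n(x/2)$ (excluded by coprimality) or from a genuinely double root of some $f_{\kappa}(\beta,\cdot)$, i.e.\ a vanishing discriminant $\beta^4-8\beta^2+4\kappa=0$; but $\eta_{n,\kappa}=\prod_{A_n(\beta/2)=0}(\beta^4-8\beta^2+4\kappa)$, so $\eta_{n,\kappa}\ne0$ excludes precisely this. (The hypothesis $2n+1\not\equiv0\pmod p$ enters to keep $\pm2$ off the root set, via $A_n(\pm1)\in\{2n+1,(-1)^n\}$, ruling out the degenerate boundary cases.) Hence $B_{n,\kappa}$ is separable.

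Finally, for (iv) I reduce the generators of $\Inkp$ modulo $J:=\langle B_{n,\kappa},\,x-h\rangle$: since $x\equiv h(y)$, both $A_n(h(y)/2)$ and $f_{\kappa}(h(y),y)$ are polynomials in $y$ that vanish at every root of $B_{n,\kappa}$ — each root $\alpha$ has a unique preimage $\beta=h(\alpha)$ with $A_n(\beta/2)=f_{\kappa}(\beta,\alpha)=0$, by the injectivity that coprimality provides — and separability upgrades ``vanishing at all roots'' to ``divisible by $B_{n,\kappa}$''. Thus $A_n(x/2),f_{\kappa}\in J$, giving $\Inkp\subseteq J$ and so $\Inkp=J$; uniqueness of $C_{n,\kappa}$ of degree $<2n$ follows from $\Inkp\cap\Fp[y]=(B_{n,\kappa})$ together with $\deg B_{n,\kappa}=2n$ and $(\kappa-4)$ being a unit. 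I expect the main obstacle to be the two places where $A_n(x/2)$ and $A_n(-x/2)$ must be coprime modulo $p$, that is, proving their resultant is a power of $2$, since this single fact underwrites both the invertibility of $Q(\rho)$ and the separability of $B_{n,\kappa}$; a secondary technical point is to verify that the natural cleared denominator in step (ii) is exactly $(\kappa-4)^{n-1}$ rather than a higher power, which one tracks through the degrees of $P,Q$ and the value $B_{n,\kappa}(2)=(4-\kappa)^n$.
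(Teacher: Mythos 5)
Your proof is correct, but it takes a genuinely different route from the paper's. The paper verifies that $\Inkp$ is radical (separability of $A_n(x/2)$ from $D(A_n(x/2))=(2n+1)^{n-1}$ and $2n+1\not\equiv 0$, separability of each $f_{\kappa}(\gamma,y)$ from $\eta_{n,\kappa}\ne 0$) and then invokes the Shape Lemma of Kreuzer--Robbiano as a black box, identifying the univariate generator with $B_{n,\kappa}$ via the elimination ideal. You instead build the shape basis by hand, exploiting that $f_{\kappa}(x,y)=(2-y)x^2+(y^2-\kappa)$ is quadratic in $x$ with no linear term: inverting $2-y$ (legitimate since $B_{n,\kappa}(2)=(4-\kappa)^n$), splitting $A_n(x/2)$ into even and odd parts, and inverting the odd part to solve for $x$. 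This buys two things. First, it is constructive, effectively describing how $C_{n,\kappa}$ is computed. Second, and more substantively, it supplies a hypothesis the Shape Lemma actually needs but the paper's proof does not explicitly check: radicality alone is not enough; one also needs the $y$-coordinates of the points of $V(\Inkp)$ to be pairwise distinct (normal $y$-position), equivalently the separability of $B_{n,\kappa}=\prod_{\beta}f_{\kappa}(\beta,y)$. Your observation that a repeated root could only arise from a pair $\beta,-\beta$ of common roots of $A_n(x/2)$ and $A_n(-x/2)$ — ruled out because $\Res(A_n(x),A_n(-x))^2=D(U_{2n})/D(A_n)^2$ is a power of $2$, hence a unit mod $p$ — closes exactly this gap, and the same coprimality underwrites the invertibility of your odd part $Q(\rho)$. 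Two minor remarks: like the paper, you must (and do) assume $\kappa\not\equiv 4\pmod{p}$, which is not literally implied by $\eta_{n,\kappa}\ne 0$ but is needed for the statement to be meaningful (and indeed $B_{n,\kappa}$ acquires the factor $(y-2)^n$ when $\kappa\equiv 4$, so the conclusion genuinely fails there); and your closing worry about the cleared denominator being exactly $(\kappa-4)^{n-1}$ is unnecessary, since $C_{n,\kappa}$ is simply \emph{defined} as $(\kappa-4)^{n-1}h$ — the factor is a normalization, not something to be verified.
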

\begin{proof}
It is clear that $\Inkp$ 
is a zero-dimensional ideal.
Since $D(A_n(x/2))=(2n+1)^{n-1}$
by Lemma~\ref{lem:Am properties} (4),
our assumption on $n$ ensures that
$A_n(x/2)$ has no multiple roots.
Moreover, since $D_y(f_{\kappa}(x,y))=x^4-8x^2+4\kappa$, where $D_y$ denotes the discriminant with respect to the variable $y$,
the assumption $\eta_{n,\kappa}\ne 0$
implies that $f_{\kappa}(\gamma,y)$ has no multiple roots 
for any root $\gamma$ of $A_n(x/2)=0$.
These observations show that $\Inkp$ is a radical ideal of $\Fp[x,y]$.
Therefore, we can apply the so-called shape Lemma (see, e.g.~\cite[Theorem~3.7.25)]{KR2000}),
which guarantees the existence of 
unique polynomials $g(y),h(y)\in\Fp[y]$ such that 
$g(y)$ is monic,
$\deg h(y)<\deg g(y)$, 
and $I_{n,\kappa}(p)=\langle g(y),x-h(y)\rangle$
(such a basis is called a shape basis, which is a special type of reduced Gr\"obner basis).
Since $g(y)$ generates the elimination ideal $\Inkp\cap \Fp[y]$, we may take it to be the resultant $B_{n,\kappa}(y)$.
Finally, we put $C_{n,\kappa}(y)\coloneq(\kappa-4)^{n-1}h(y)$,
where the factor is introduced as a normalization.
\end{proof}

Now, by \eqref{for:grobner basis shape type},
we have $A_n(\beta/2)=f_{\kappa}(\beta,\alpha)=0$ if and only if $B_{n,\kappa}(\alpha)=\beta-C_{n,\kappa}(\alpha)/(\kappa-4)^{n-1}=0$.
Moreover, if $\xi_{n,\kappa}\ne 0$ and $\eta_{n,\kappa}\ne 0$, where  
\begin{equation*}
\xi_{n,\kappa}
\coloneq\Res\left(A_n\left(\frac{x}{2}\right),x^3-3x^2+\kappa\right)
\in\Fp,
\end{equation*}
then neither of the systems
$A_{n}(x/2)=x^2(3-x)-\kappa=0$ and 
$A_{n}(x/2)=x^2(8-x^2)-4\kappa=0$ 
has a common root. 
Therefore, one obtains the following:

\begin{cor}
\label{cor:all equal dist}
Let $\kappa\in\mathbb{Z}\setminus\{4\}$ and 
$n\in\mathbb{Z}_{>0}$ satisfying $2n+1\not\equiv 0 \pmod{p}$.
Assume that $\xi_{n,\kappa}\eta_{n,\kappa}\ne 0$.
\begin{itemize}
\item[$(1)$]
We have   
\begin{equation}
\label{for:Tnkdp by B and C}
\Tnkpall=\Tnkpdist=\left\{\left(\alpha,\beta\right)\in \Fp\times \Fpm\,\left|\,
B_{n,\kappa}(\alpha)=0,\ 
\beta=\frac{C_{n,\kappa}(\alpha)}{(\kappa-4)^{n-1}}\right.
\right\}.    
\end{equation}
\item[$(2)$]
If $(\alpha,\beta)\in \Tnkpdist$, then 
$B_{n,\kappa}(\alpha)=B_{n,\kappa}(\overline{\alpha})=0$.
Hence, the roots of $B_{n,\kappa}(y)=0$ appear in pairs.
\end{itemize}
\end{cor}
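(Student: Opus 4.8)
The plan is to read off both claims from the shape-basis description \eqref{for:grobner basis shape type} together with the hypothesis $\xi_{n,\kappa}\eta_{n,\kappa}\ne0$, which forces $A_n(x/2)$ to share no root with the auxiliary polynomials $x^3-3x^2+\kappa$ and $x^4-8x^2+4\kappa$.

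For part $(1)$, set $S\coloneq\{(\alpha,\beta)\in\Fp\times\Fpm : B_{n,\kappa}(\alpha)=0,\ \beta=C_{n,\kappa}(\alpha)/(\kappa-4)^{n-1}\}$. Then $\Tnkpall=S$ is immediate: the equivalence recorded just above the statement says that $A_n(\beta/2)=f_{\kappa}(\beta,\alpha)=0$ holds iff $B_{n,\kappa}(\alpha)=0$ and $\beta=C_{n,\kappa}(\alpha)/(\kappa-4)^{n-1}$, and both $\Tnkpall$ and $S$ carry the same constraint $\beta\ne0$. The substantive equality is $\Tnkpall=\Tnkpdist$; since $\Tnkpdist\subseteq\Tnkpall$ by definition, I would prove the reverse inclusion. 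Take $(\alpha,\beta)\in\Tnkpall$, so $A_n(\beta/2)=0$. The two exceptional clauses defining $\Tnkpdist$ read $\beta^2(3-\beta)\ne\kappa$ and $\beta^2(8-\beta^2)\ne4\kappa$, i.e. $\beta$ is a root of neither $x^3-3x^2+\kappa$ nor $x^4-8x^2+4\kappa$. Since $A_n(x/2)$ is monic and $\xi_{n,\kappa}=\Res(A_n(x/2),x^3-3x^2+\kappa)\ne0$, $\eta_{n,\kappa}=\Res(A_n(x/2),x^4-8x^2+4\kappa)\ne0$, neither auxiliary polynomial shares a root with $A_n(x/2)$ over $\overline{\Fp}$, hence none over $\Fp$; as $\beta$ is a root of $A_n(x/2)$, both clauses hold and $(\alpha,\beta)\in\Tnkpdist$.

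For part $(2)$, fix $(\alpha,\beta)\in\Tnkpdist=\Tnkpall$. Then $\beta$ is a common root of $A_n(x/2)$ and $f_{\kappa}(x,\alpha)$, so $B_{n,\kappa}(\alpha)=\Res_x(A_n(x/2),f_{\kappa}(x,\alpha))=0$. To obtain $B_{n,\kappa}(\overline{\alpha})=0$ I would regard $f_{\kappa}(\beta,y)=y^2-\beta^2y+2\beta^2-\kappa$ as a quadratic in $y$: its two roots sum to $\beta^2$, and since $\alpha$ is one root the other is exactly $\overline{\alpha}=\beta^2-\alpha$, so $f_{\kappa}(\beta,\overline{\alpha})=0$. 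Thus $\beta$ is also a common root of $A_n(x/2)$ and $f_{\kappa}(x,\overline{\alpha})$, whence $B_{n,\kappa}(\overline{\alpha})=0$; equivalently $(\overline{\alpha},\beta)\in\Tnkpall$.

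For the pairing conclusion I would first note that the correspondence is exhaustive: by \eqref{for:grobner basis shape type} every root $\alpha_0$ of $B_{n,\kappa}$ produces the point $(C_{n,\kappa}(\alpha_0)/(\kappa-4)^{n-1},\alpha_0)$ in the variety of $\Inkp$, and its first coordinate $\beta_0$ is nonzero because $A_n(\beta_0/2)=0$ while $A_n(0)=(-1)^{\lfloor n/2\rfloor}\ne0$ by Lemma~\ref{lem:Am properties} $(2)$; hence $(\alpha_0,\beta_0)\in\Tnkpall=\Tnkpdist$. The involution $\alpha_0\mapsto\overline{\alpha_0}=\beta_0^2-\alpha_0$ on the root set then satisfies $\overline{\overline{\alpha_0}}=\alpha_0$, and it is fixed-point-free: $\alpha_0=\overline{\alpha_0}$ would force $\beta_0^4-8\beta_0^2+4\kappa=0$, making $\beta_0$ a common root of $A_n(x/2)$ and $x^4-8x^2+4\kappa$, contradicting $\eta_{n,\kappa}\ne0$. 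Consequently the roots split into two-element orbits, i.e. appear in pairs. The only genuinely delicate point --- and the reason the exceptional conditions are built into $\Tnkpdist$ --- is this fixed-point-freeness; the Vieta step and the ``nonzero resultant implies no common root'' bookkeeping are routine.
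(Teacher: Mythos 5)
Your proposal is correct and follows essentially the same route as the paper: part (1) is read off from the shape-basis description of $\Inkp$ together with the non-vanishing of the resultants $\xi_{n,\kappa},\eta_{n,\kappa}$, and part (2) rests on the symmetry $y\mapsto x^2-y$ of $f_{\kappa}$ (your Vieta phrasing is the same observation the paper records as $f_{\kappa}(x,y)=f_{\kappa}(x,x^2-y)$ combined with $B_{n,\kappa}(y)\in\Inkp$). Your added remark that the involution $\alpha\mapsto\overline{\alpha}$ is fixed-point-free because $\eta_{n,\kappa}\ne 0$ is a correct elaboration of the ``roots appear in pairs'' claim that the paper leaves implicit.
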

\begin{proof}
The above observations show all assertions except for
$B_{n,\kappa}(\overline{\alpha})=0$. 
The remaining one follows from 
$B_{n,\kappa}(y)\in \left\langle A_n\left(x/2\right),f_{\kappa}(x,y)
\right\rangle$ together with the identity
$f_{\kappa}(x,y)=f_{\kappa}(x,x^2-y)$.
\end{proof}

\begin{example}
\label{ex-Grobner}
A direct computation 
shows that
\begin{align*}
\xi_{1,\kappa}
&=\kappa-4,\\
\eta_{1,\kappa}
&=4 \kappa-7,\\
 B_{1,\kappa}(y)
&=y^2-y-\kappa+2,\\
 C_{1,\kappa}(y)
&=-1, \\[5pt]
\xi_{2,\kappa}
&=\kappa^2-13 \kappa+11,\\
\eta_{2,\kappa}
&=16 \kappa^2-68 \kappa+41,\\
 B_{2,\kappa}(y)
&=y^4-3y^3-(2 \kappa-7) y^2+(3 \kappa-4) y+\kappa^2-6 \kappa+4, \\
 C_{2,\kappa}(y)
&=-y^3+y^2+(\kappa-1) y-2, \\[5pt]
\xi_{3,\kappa}
&=\kappa^3-19 \kappa^2+55 \kappa-29,\\
\eta_{3,\kappa}
&=64 \kappa^3-432 \kappa^2+776 \kappa-239,\\
 B_{3,\kappa}(y)
&=y^6-5 y^5-(3\kappa-16) y^4+5(2 \kappa-5) y^3
+\left(3 \kappa^2-26 \kappa+30\right) y^2
\\
&\quad -\left(5 \kappa^2-24 \kappa+12\right) y-\kappa^3+10 \kappa^2-24 \kappa+8,\\
 C_{3,\kappa}(y)
&=-2 y^5-(\kappa-10) y^4+(7 \kappa-24) y^3
+\left(2 \kappa^2-20 \kappa+26\right) y^2\\
&\quad -\left(5 \kappa^2-25 \kappa+12\right) y
-\kappa^3+9 \kappa^2-18 \kappa, \\[5pt]
\xi_{4,\kappa}
&=\kappa^4-25 \kappa^3+150 \kappa^2-283 \kappa+76,\\
\eta_{4,\kappa}
&=256 \kappa^4-2368 \kappa^3+7152 \kappa^2-7432 \kappa+1393,\\
 B_{4,\kappa}(y)
&=y^8-7 y^7-(4\kappa-29) y^6+7(3 \kappa-10) y^5
+\left(6 \kappa^2-72 \kappa+121\right) y^4
-\left(21 \kappa^2-130 \kappa+128\right) y^3\\
&\quad -\left(4 \kappa^3-57 \kappa^2+180 \kappa-104\right) y^2
+\left(7 \kappa^3-60 \kappa^2+120 \kappa-32\right) y
+\kappa^4-14 \kappa^3+60 \kappa^2-80 \kappa+16,\\
 C_{4,\kappa}(y)
&=-(\kappa+1) y^7+(3 \kappa+13) y^6+\left(3 \kappa^2-\kappa-59\right) y^5
-\left(4 \kappa^2+47 \kappa-152\right) y^4\\
&\quad -\left(3 \kappa^3+\kappa^2-145 \kappa+217\right) y^3
-\left(\kappa^3-63 \kappa^2+274 \kappa-182\right) y^2\\
&\quad +\left(\kappa^4+3 \kappa^3-88 \kappa^2+220 \kappa-60\right) y
+2 \kappa^4-28 \kappa^3+116 \kappa^2-136 \kappa+8.
\end{align*}
\end{example}

\begin{remark}
\label{for:dibisibility}
One sees that 
$B_{n,\kappa}(x)$ is divided by $B_{s,\kappa}(x)$ 
if $n=(2s+1)l+s$ for some integer $l\ge 0$,
which implies that 
$T^{\text{all}}_{n,\kappa}(p)\ne\emptyset$ 
whenever $T^{\text{all}}_{s,\kappa}(p)\ne\emptyset$.
For example, 
when $n=4$, $s=1$ and $l=1$, we have 
\begin{align*}
\frac{B_{4,\kappa}(y)}{B_{1,\kappa}(y)}
&=
-y^6
+6y^5
+3(\kappa-7)y^4
-(12\kappa-37)y^3\\
&\qquad 
-3(\kappa^2-11 \kappa+14)y^2
+6(\kappa^2-6 \kappa+2)y
+\kappa^3-12\kappa^2+36\kappa-8.  
\end{align*}
This divisibility can be proved as follows.
From \cite[Theorem~2.3]{LW2011}, 
$A_n(x)$ is divisible by $A_s(x)$ in $\mathbb{Z}[x]$ if and only if $n=(2s+1)l+s$ for some integer $l\ge 0$.
Thus, by writing $A_n(x)=A_s(x)Q_{n,s}(x)$ for some $Q_{n,s}(x)\in\mathbb{Z}[x]$ and applying the multiplicative property of the resultant $\Res(f(x)g(x),h(x))=\Res(f(x),h(x))\Res(g(x),h(x))$, 
we obtain $B_{n,\kappa}(y)=B_{s,\kappa}(y)
\Res_x\left(Q_{n,s}\left(x/2\right),f_{\kappa}(x,y)\right)$.
For the same reason, 
$\xi_{n,\kappa}$ and $\eta_{n,\kappa}$ are divisible by $\xi_{s,\kappa}$ and $\eta_{s,\kappa}$, respectively, for such an integer $s$.
\end{remark}

\subsection{Properness of $K_{(\alpha,\beta)}$}
\label{sec:K33 minors}

Now, for $\kappa\in\mathbb{Z}\setminus\{4\}$,
our task is to determine 
when $\Knkpprop\ne\emptyset$ for a given prime $p>3$ and $n\in\mathbb{Z}_{>0}$.
To this end, we investigate when a configuration 
$K=\left(\begin{array}{ccc}
X_1,\!\! & X_2,\!\! & X_3 \\
Y_1,\!\! & Y_2,\!\! & Y_3
\end{array}\right)\in\Knkpdist$ 
is proper in our sense.
We may assume that $K$ is of the form of $K=K_{(\alpha,\beta)}$ for some $(\alpha,\beta)\in \Tnkpdist$. 
It is clear that neither $X_k$ nor $Y_k$ is an internal vertex of the $X_i$-$Y_j$-path for any $\{i,j,k\}=\{1,2,3\}$. 
Moreover, one easily observes that $X_i$ (resp. $X_j$) is an internal vertex of the $X_i$-$Y_j$-path if and only if $Y_i$ (resp. $Y_j$) is. Therefore, by the symmetry of $\Gkp$, we see that $K$ is proper if and only if neither $X_1$ nor $X_2$ is an internal vertex of the $X_1$-$Y_2$ path.
Now, assume that $n\in\mathbb{Z}_{>0}$ 
satisfies $2n+1\not\equiv 0 \pmod{p}$,
which implies that $\alpha\ne 2$ and $\beta\ne \pm 2$ for any 
$(\alpha,\beta)\in\Tnkpdist$. 
Under these conditions,
we consider the following four cases.

\begin{itemize}
\item[(i-a)] 
The case where there exists $0<m<n$ such that 
$(R_1R_2)^m(X_1)=X_1$. 
From Lemma~\ref{lem:F G}, we have 
$F_m(\beta)
\left[
\begin{array}{c}
 \alpha\\ 
 \beta
\end{array}
\right]
=\left[
\begin{array}{c}
 \alpha \\ 
 \beta
\end{array}
\right]$, 
which is equivalent to 
\begin{equation*}
\left\{
\begin{array}{rcl}
\left(U_{2m}\left(\frac{\beta}{2}\right)-1\right)\alpha
&\!\!\!\!=\!\!\!\!&U_{2m-1}\left(\frac{\beta}{2}\right)\beta, \\[5pt]
U_{2m-1}\left(\frac{\beta}{2}\right)\alpha
&\!\!\!\!=\!\!\!\!&\left(U_{2m-2}\left(\frac{\beta}{2}\right)+1\right)\beta. 
\end{array}
\right.
\end{equation*}
Moreover, by Lemma~\ref{lem:T_m and U_m properties} (4), (5) and (6),
these are equivalent to
\[
\left\{
\begin{array}{rcl}
U_{m-1}\left(\frac{\beta}{2}\right)\left(T_{m+1}\left(\frac{\beta}{2}\right)\alpha-T_{m}\left(\frac{\beta}{2}\right)\beta\right)
&\!\!\!\!=\!\!\!\!& 0,\\
U_{m-1}\left(\frac{\beta}{2}\right)\left(T_{m}\left(\frac{\beta}{2}\right)\alpha-T_{m-1}\left(\frac{\beta}{2}\right)\beta\right)
&\!\!\!\!=\!\!\!\!& 0.
\end{array}
\right.
\]
This implies $U_{m-1}(\beta/2)=0$, because, otherwise, 
we have $T_{m+1}(\beta/2)\alpha-T_{m}(\beta/2)\beta=T_{m}(\beta/2)\alpha-T_{m-1}(\beta/2)\beta=0$. 
Using \eqref{for:recursion for T_m} repeatedly, 
we obtain $T_{1}(\beta/2)\alpha-T_0(\beta/2)\beta=(\alpha-2)\beta/2=0$,
which contradicts the fact that $\alpha\ne 2$ and $\beta\ne 0$. 
Hence, 
since $\beta$ is a common root of $A_n(x/2)$ and $U_{m-1}(x/2)$,
we have the following.

\begin{obs}
If there exists $0<m<n$ such that $(R_1R_2)^m(X_1)=X_1$,
then we have 
\[
\Res_x\left(A_n\left(\frac{x}{2}\right),U_{m-1}\left(\frac{x}{2}\right)\right)=0.
\]
\end{obs}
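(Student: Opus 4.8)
The plan is to convert the fixed-point hypothesis into a linear condition and then isolate the factor $U_{m-1}(\beta/2)$. Since $K=K_{(\alpha,\beta)}$ for some $(\alpha,\beta)\in\Tnkpdist$, I record at the outset that $A_n(\beta/2)=0$, that $\beta\in\Fpm$ so $\beta\neq 0$, and that the standing assumption $2n+1\not\equiv 0\pmod p$ forces $\alpha\neq 2$. Writing $X_1=(\alpha,\beta,\beta)$ and applying Lemma~\ref{lem:F G} with $(i,j,k)=(1,2,3)$ (so that $R_1R_2$ fixes the third coordinate $\beta$ and acts on the first two entries through $F_m(\beta)$), the hypothesis $(R_1R_2)^m(X_1)=X_1$ becomes the fixed-point equation $F_m(\beta)(\alpha,\beta)^{\top}=(\alpha,\beta)^{\top}$.

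Next I would expand this into the two scalar equations $(U_{2m}(\beta/2)-1)\alpha=U_{2m-1}(\beta/2)\beta$ and $U_{2m-1}(\beta/2)\alpha=(U_{2m-2}(\beta/2)+1)\beta$, and rewrite each Chebyshev coefficient via Lemma~\ref{lem:T_m and U_m properties}: item $(5)$ gives $U_{2m}(\beta/2)-1=2T_{m+1}(\beta/2)U_{m-1}(\beta/2)$, item $(6)$ gives $U_{2m-1}(\beta/2)=2T_m(\beta/2)U_{m-1}(\beta/2)$, and item $(4)$ with $m$ replaced by $m-1$ gives $U_{2m-2}(\beta/2)+1=2T_{m-1}(\beta/2)U_{m-1}(\beta/2)$. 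Both equations then share the factor $U_{m-1}(\beta/2)$ and collapse to $U_{m-1}(\beta/2)\bigl(T_{m+1}(\beta/2)\alpha-T_m(\beta/2)\beta\bigr)=0$ and $U_{m-1}(\beta/2)\bigl(T_m(\beta/2)\alpha-T_{m-1}(\beta/2)\beta\bigr)=0$.

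The heart of the argument, and the step I expect to demand the most care, is excluding the alternative $U_{m-1}(\beta/2)\neq 0$. Assuming it, both bracketed factors vanish, so I would introduce $P_k\coloneq T_{k+1}(\beta/2)\alpha-T_k(\beta/2)\beta$ and show that this auxiliary sequence obeys the very same three-term recurrence as the Chebyshev polynomials, $P_{k+1}=\beta P_k-P_{k-1}$; this follows by adding $P_{k+1}+P_{k-1}$ and using the collapses $T_{k+2}+T_k=\beta T_{k+1}$ and $T_{k+1}+T_{k-1}=\beta T_k$ coming from \eqref{for:recursion for T_m}. Because $P_m=P_{m-1}=0$, solving the recurrence downward forces $P_k=0$ for every $0\le k\le m$, and in particular $P_0=T_1(\beta/2)\alpha-T_0(\beta/2)\beta=(\alpha-2)\beta/2=0$. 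Since $\alpha\neq 2$ and $\beta\neq 0$, this is a contradiction, and hence $U_{m-1}(\beta/2)=0$.

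To finish, $\beta$ is now a common root in $\Fp$ of $A_n(x/2)$ and $U_{m-1}(x/2)$, so these polynomials have a nonconstant common factor and their resultant vanishes, yielding $\Res_x(A_n(x/2),U_{m-1}(x/2))=0$. The only point to watch is the degenerate index $m=1$, where $U_0(x/2)\equiv 1$: there the conclusion $U_0(\beta/2)=0$ reads $1=0$, which is impossible, so $(R_1R_2)(X_1)=X_1$ simply cannot occur and the statement is vacuous for $m=1$; for every $m\ge 2$ the common-root-to-resultant step is genuine.
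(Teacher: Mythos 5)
Your proposal is correct and follows essentially the same route as the paper: reduce the fixed-point condition to $F_m(\beta)(\alpha,\beta)^{\top}=(\alpha,\beta)^{\top}$, factor the resulting scalar equations via Lemma~\ref{lem:T_m and U_m properties}~(4)--(6), rule out $U_{m-1}(\beta/2)\neq 0$ by descending the Chebyshev recurrence to $(\alpha-2)\beta/2=0$, and conclude via the common root $\beta$ of $A_n(x/2)$ and $U_{m-1}(x/2)$. Your explicit auxiliary sequence $P_k$ and the remark on the vacuous case $m=1$ merely flesh out steps the paper leaves implicit.
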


\item[(i-b)]
The case where there exists $0\le m<n$ such that 
$(R_2(R_1R_2)^m)(X_1)=X_1$.
From Lemma~\ref{lem:F G}, we have 
$G_m(\beta)
\left[
\begin{array}{c}
 \alpha\\ 
 \beta
\end{array}
\right]
=\left[
\begin{array}{c}
 \alpha \\ 
 \beta
\end{array}
\right]$, 
which is equivalent to 
\begin{equation*}
\left\{
\begin{array}{rcl}
\left(U_{2m}\left(\frac{\beta}{2}\right)-1\right)\alpha
&\!\!\!\!=\!\!\!\!&U_{2m-1}\left(\frac{\beta}{2}\right)\beta, \\[5pt]
U_{2m+1}\left(\frac{\beta}{2}\right)\alpha
&\!\!\!\!=\!\!\!\!&\left(U_{2m}\left(\frac{\beta}{2}\right)+1\right)\beta.
\end{array}
\right.
\end{equation*}
Moreover, by Lemma~\ref{lem:T_m and U_m properties} (4), (5) and (6),
these are equivalent to
\[
\left\{
\begin{array}{rcl}
U_{m-1}\left(\frac{\beta}{2}\right)\left(T_{m+1}\left(\frac{\beta}{2}\right)\alpha-T_{m}\left(\frac{\beta}{2}\right)\beta\right)
&\!\!\!\!=\!\!\!\!& 0,\\
U_{m}\left(\frac{\beta}{2}\right)\left(T_{m+1}\left(\frac{\beta}{2}\right)\alpha-T_{m}\left(\frac{\beta}{2}\right)\beta\right)
&\!\!\!\!=\!\!\!\!& 0.
\end{array}
\right.
\]
This implies
\begin{equation}
\label{eq:i-b}
T_{m+1}\left(\frac{\beta}{2}\right)\alpha
-T_{m}\left(\frac{\beta}{2}\right)\beta=0,
\end{equation}
because, otherwise, 
we have $U_{m-1}(\beta/2)=U_{m}(\beta/2)=0$. 
Using \eqref{for:recursion for U_m} repeatedly, 
we obtain $U_{0}(\beta/2)=0$,
which contradicts the fact that $U_{0}(\beta/2)=1$. 
Now, substituting $\beta^2=(\alpha^2-\kappa)/(\alpha-2)$ into \eqref{eq:i-b} and noticing $\beta\ne 0$, we see that $f_{m,\kappa}(\alpha)=0$,
where $h_{m,\kappa}(y)\in\Fp[y]$ is a monic polynomial of degree $2\lfloor(m+1)/2\rfloor+1$ defined by 
\[
 h_{m,\kappa}(y)
\coloneq 2\left(\sum^{\lfloor\frac{m+1}{2}\rfloor}_{j=0}t_{m+1,j}(y^2-\kappa)^{\lfloor\frac{m+1}{2}\rfloor-j}(y-2)^jy
-
\sum^{\lfloor\frac{m}{2}\rfloor}_{j=0}t_{m,j}(y^2-\kappa)^{\lfloor\frac{m+1}{2}\rfloor-j}(y-2)^j\right).
\]
Hence, 
since $\alpha$ is a common root of $B_{n,\kappa}(y)$ and $h_{m,\kappa}(y)$, we have the following.
\begin{obs}
If there exists $0\le m<n$ such that 
$(R_2(R_1R_2)^m)(X_1)=X_1$,
then we have 
\[
\Res_y(B_{n,\kappa}(y),h_{m,\kappa}(y))=0.
\]
\end{obs}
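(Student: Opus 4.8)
The plan is to convert the fixed-point condition into a polynomial equation satisfied by $\alpha$ alone, and then read off the resultant vanishing as the assertion that $\alpha$ is a common root of $B_{n,\kappa}$ and $h_{m,\kappa}$. Starting from $K=K_{(\alpha,\beta)}$ with $X_1=(\alpha,\beta,\beta)$, I would apply Lemma~\ref{lem:F G} (with $(i,j,k)=(1,2,3)$) to rewrite the hypothesis $R_2(R_1R_2)^m(X_1)=X_1$ as the matrix identity $G_m(\beta)[\alpha,\beta]^{\mathrm{T}}=[\alpha,\beta]^{\mathrm{T}}$, equivalently the pair of scalar equations in $U_{2m\pm 1}(\beta/2)$ and $U_{2m}(\beta/2)$ displayed above. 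Feeding in the product formulas of Lemma~\ref{lem:T_m and U_m properties}~(4)--(6) factors both equations through the single quantity $T_{m+1}(\beta/2)\alpha-T_m(\beta/2)\beta$, weighted by $U_{m-1}(\beta/2)$ and $U_m(\beta/2)$ respectively. Since consecutive values $U_{m-1}(\beta/2),U_m(\beta/2)$ cannot both vanish (running the recurrence \eqref{for:recursion for U_m} backward would force $U_0(\beta/2)=0$), this yields equation \eqref{eq:i-b}.

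The crux is eliminating $\beta$ from \eqref{eq:i-b}. Because $2n+1\not\equiv 0\pmod p$ forces $\alpha\ne 2$, the Markoff relation $f_\kappa(\beta,\alpha)=0$ supplies $\beta^2=(\alpha^2-\kappa)/(\alpha-2)$. Expanding $T_{m+1}$ and $T_m$ via \eqref{for:explicit expression of T_m}, each monomial $\beta^{m+1-2j}$ shares the parity of $m+1$; I would treat the two parities of $m$ separately, extract the single surplus factor of $\beta$ (harmless as $\beta\ne 0$) when $m$ is even, multiply through by $(\alpha-2)^{\lfloor(m+1)/2\rfloor}$ to clear denominators, and substitute for $\beta^2$. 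This reproduces exactly $h_{m,\kappa}(\alpha)=0$, the overall factor $2$ in the definition of $h_{m,\kappa}$ being immaterial to the vanishing.

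It remains to note that $\alpha$ is also a zero of $B_{n,\kappa}$. This is immediate: as $(\alpha,\beta)\in\Tnkpall$ we have $A_n(\beta/2)=f_\kappa(\beta,\alpha)=0$, so $x=\beta$ is a common root in $x$ of the two polynomials whose $x$-resultant defines $B_{n,\kappa}(\alpha)$, whence $B_{n,\kappa}(\alpha)=0$ (this is also Corollary~\ref{cor:all equal dist}~(1)). Thus $\alpha\in\Fp$ is a common root of the nonzero polynomials $B_{n,\kappa}(y)$ and $h_{m,\kappa}(y)$, so $(y-\alpha)$ divides both and $\Res_y(B_{n,\kappa}(y),h_{m,\kappa}(y))=0$.

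The main obstacle I anticipate is the parity-sensitive elimination: one must check that clearing denominators and replacing $\beta^2$ in \eqref{eq:i-b} yields precisely the coefficients $t_{m+1,j},t_{m,j}$ and the exponent pattern $(y^2-\kappa)^{\lfloor(m+1)/2\rfloor-j}(y-2)^j$ of $h_{m,\kappa}$, uniformly across the two parities of $m$, and that the factored-out $\beta$ in the even case never obstructs the argument. The passage from a common root to a vanishing resultant is then routine.
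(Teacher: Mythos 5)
Your proposal is correct and follows essentially the same route as the paper: Lemma~\ref{lem:F G} gives the matrix identity, Lemma~\ref{lem:T_m and U_m properties}~(4)--(6) factor both scalar equations through $T_{m+1}(\beta/2)\alpha-T_m(\beta/2)\beta$ with weights $U_{m-1}(\beta/2)$ and $U_m(\beta/2)$, the backward recurrence rules out the simultaneous vanishing of consecutive $U$'s, and substituting $\beta^2=(\alpha^2-\kappa)/(\alpha-2)$ (using $\beta\ne 0$) turns \eqref{eq:i-b} into $h_{m,\kappa}(\alpha)=0$, whence $\alpha$ is a common root of $B_{n,\kappa}$ and $h_{m,\kappa}$ and the resultant vanishes. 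Your explicit parity bookkeeping in the elimination step and the justification that $B_{n,\kappa}(\alpha)=0$ are details the paper leaves implicit, but they match its definitions exactly.
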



\item[(ii-a)] 
The case where there exists $0<m<n$ such that 
$(R_1R_2)^m(X_1)=X_2$. 
From Lemma~\ref{lem:F G}, we have 
$F_m(\beta)
\left[
\begin{array}{c}
 \alpha\\ 
 \beta
\end{array}
\right]
=\left[
\begin{array}{c}
 \beta \\
 \alpha  
\end{array}
\right]$, 
which is equivalent to 
\begin{equation}
\label{for:R1R2X1 X2}
\left\{
\begin{array}{rcl}
U_{2m}\left(\frac{\beta}{2}\right)\alpha
&\!\!\!\!=\!\!\!\!&\left(U_{2m-1}\left(\frac{\beta}{2}\right)+1\right)\beta, \\[5pt]
\left(U_{2m-1}\left(\frac{\beta}{2}\right)-1\right)\alpha
&\!\!\!\!=\!\!\!\!&U_{2m-2}\left(\frac{\beta}{2}\right)\beta.  
\end{array}
\right.
\end{equation}
Notice that if $U_{2m}(\beta/2)\ne 0$,
then the second equation is a consequence of the first.
Now, by substituting 
$\beta=C_{n,\kappa}(\alpha)/(\kappa-4)^n$
and $\beta^2=(\alpha^2-\kappa)/(\alpha-2)$ into the first equation of \eqref{for:R1R2X1 X2}, 
we see that $g_{n,m,\kappa}(\alpha)=0$,
where $g_{n,m,\kappa}(y)\in\Fp[y]$ is a polynomial of degree $<2n+m$ defined by  
\begin{align*}
 g_{n,m,\kappa}(y)
&\coloneq(y-2)^{m}C_{n,\kappa}(y)\\
&\quad 
-(\kappa-4)^{n-1}\left\{\sum^{m}_{j=0}u_{2m,j}(y^2-\kappa)^{m-j}(y-2)^{j}y-\sum^{m-1}_{j=0}u_{2m-1,j}(y^2-\kappa)^{m-j}(y-2)^{j}\right\}.    
\end{align*}
Hence, 
since $\alpha$ is a common root of 
$B_{n,\kappa}(y)$ and $g_{n,m,\kappa}(y)$,
we have the following.

\begin{obs}
If there exists $0<m<n$ such that 
$(R_1R_2)^m(X_1)=X_2$, then we have
\[
\Res_y(B_{n,\kappa}(y),g_{n,m,\kappa}(y))=0.
\]
\end{obs}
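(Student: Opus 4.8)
The plan is to show that the hypothesis forces $\alpha$ to be a common root of the two univariate polynomials $B_{n,\kappa}(y)$ and $g_{n,m,\kappa}(y)$ over $\Fp$, whence their resultant necessarily vanishes. Throughout I work under the standing assumptions of this subsection, namely $\kappa\neq 4$ and $2n+1\not\equiv 0\pmod p$, together with $\xi_{n,\kappa}\eta_{n,\kappa}\neq 0$ so that the shape basis \eqref{for:grobner basis shape type} and Corollary~\ref{cor:all equal dist} are available. As already noted, every relevant $K\in\Knkpdist$ may be taken to be $K_{(\alpha,\beta)}$ with $(\alpha,\beta)\in\Tnkpdist=\Tnkpall$; by \eqref{for:Tnkdp by B and C} this gives both $B_{n,\kappa}(\alpha)=0$ and $\beta=C_{n,\kappa}(\alpha)/(\kappa-4)^{n-1}$, while $f_{\kappa}(\beta,\alpha)=0$ yields $\beta^2=(\alpha^2-\kappa)/(\alpha-2)$, legitimate since $\alpha\neq 2$.

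The core is then to convert the geometric condition into the polynomial identity $g_{n,m,\kappa}(\alpha)=0$. First I would invoke Lemma~\ref{lem:F G} to rewrite $(R_1R_2)^m(X_1)=X_2$ as the system \eqref{for:R1R2X1 X2}; of its two equations only the first, $U_{2m}(\beta/2)\,\alpha=(U_{2m-1}(\beta/2)+1)\,\beta$, is needed. Expanding $U_{2m}(\beta/2)$ and $U_{2m-1}(\beta/2)$ via the explicit formula \eqref{for:explicit expression of U_m} of Lemma~\ref{lem:T_m and U_m properties}~(1) turns each into a polynomial in $\beta^2$, the odd-index one carrying a single extra factor of $\beta$. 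Substituting $\beta^2=(\alpha^2-\kappa)/(\alpha-2)$ and multiplying through by $(\alpha-2)^m$ clears all denominators, replacing every $(\beta^2)^{m-j}$ by $(\alpha^2-\kappa)^{m-j}(\alpha-2)^j$; the lone surviving $\beta$ on the right is then removed through $\beta=C_{n,\kappa}(\alpha)/(\kappa-4)^{n-1}$. After multiplying by $(\kappa-4)^{n-1}$ and collecting terms, the relation becomes exactly $g_{n,m,\kappa}(\alpha)=0$, matching the displayed definition of $g_{n,m,\kappa}$.

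Finally, since $\alpha\in\Fp$ satisfies both $B_{n,\kappa}(\alpha)=0$ and $g_{n,m,\kappa}(\alpha)=0$, the two polynomials share the factor $y-\alpha$ in $\Fp[y]$. As $B_{n,\kappa}(y)$ is monic of degree $2n$, the product formula $\Res_y(B_{n,\kappa},g_{n,m,\kappa})=\prod_{\xi}g_{n,m,\kappa}(\xi)$, with $\xi$ ranging over the roots of $B_{n,\kappa}$ in $\overline{\Fp}$ counted with multiplicity, contains the vanishing factor $g_{n,m,\kappa}(\alpha)$, so $\Res_y(B_{n,\kappa}(y),g_{n,m,\kappa}(y))=0$, as claimed. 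I expect the main obstacle to lie in the substitution step rather than the resultant step: one must simultaneously juggle the two relations for $\beta$ and $\beta^2$, check that precisely the power $(\alpha-2)^m$ clears the denominators without introducing a spurious factor of $\alpha-2$, and verify that after normalization by $(\kappa-4)^{n-1}$ the Chebyshev coefficients $u_{2m,j}$ and $u_{2m-1,j}$ reassemble into exactly the two sums in the definition of $g_{n,m,\kappa}$. The passage from a common root to the vanishing of the resultant is entirely standard.
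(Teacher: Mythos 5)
Your proposal is correct and follows essentially the same route as the paper: invoke Lemma~\ref{lem:F G} to get the system \eqref{for:R1R2X1 X2}, substitute $\beta^2=(\alpha^2-\kappa)/(\alpha-2)$ and $\beta=C_{n,\kappa}(\alpha)/(\kappa-4)^{n-1}$ into the first equation, clear denominators with $(\alpha-2)^m$ to obtain $g_{n,m,\kappa}(\alpha)=0$, and conclude from the common root $\alpha$ of $B_{n,\kappa}$ and $g_{n,m,\kappa}$ that the resultant vanishes. Your bookkeeping of the single surviving odd-power factor of $\beta$ and the normalization by $(\kappa-4)^{n-1}$ matches the displayed definition of $g_{n,m,\kappa}$ exactly (the paper's in-line exponent $(\kappa-4)^{n}$ is a typo for $(\kappa-4)^{n-1}$), so there is nothing to add.
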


\item[(ii-b)] 
The case where there exists $0\le m<n$ such that
$(R_2(R_1R_2)^m)(X_1)=X_2$.
From Lemma~\ref{lem:F G}, we have 
$G_m(\beta)
\left[
\begin{array}{c}
 \alpha\\ 
 \beta
\end{array}
\right]
=\left[
\begin{array}{c}
 \beta \\ 
 \alpha   
\end{array}
\right]$, 
which is equivalent to 
\begin{equation*}
\left\{
\begin{array}{rcl}
U_{2m}\left(\frac{\beta}{2}\right)\alpha
&\!\!\!\!=\!\!\!\!&\left(U_{2m-1}\left(\frac{\beta}{2}\right)+1\right)\beta, \\[5pt]
\left(U_{2m+1}\left(\frac{\beta}{2}\right)-1\right)\alpha
&\!\!\!\!=\!\!\!\!&U_{2m}\left(\frac{\beta}{2}\right)\beta. 
\end{array}
\right.
\end{equation*}
Moreover, by Lemma~\ref{lem:U by A},
these are equivalent to
\[
\left\{
\begin{array}{rcl}
A_{m}\left(\frac{\beta}{2}\right)\left(A_{m}\left(-\frac{\beta}{2}\right)\alpha+A_{m-1}\left(-\frac{\beta}{2}\right)\beta\right)
&\!\!\!\!=\!\!\!\!& 0,\\
A_{m}\left(\frac{\beta}{2}\right)\left(A_{m+1}\left(-\frac{\beta}{2}\right)\alpha+A_{m}\left(-\frac{\beta}{2}\right)\beta\right)
&\!\!\!\!=\!\!\!\!& 0.
\end{array}
\right.
\]
Similar to the argument in case (i-a), 
this implies $A_{m}(\beta/2)=0$, because, otherwise, 
we have $A_{m}(-\beta/2)\alpha+A_{m-1}(-\beta/2)\beta=A_{m+1}(-\beta/2)\alpha+A_{m}(-\beta/2)\beta=0$. 
Using \eqref{for:recursion formula for Am} repeatedly, 
we obtain 
$A_{2}(-\beta/2)\alpha+A_{1}(-\beta/2)\beta=(\beta^2-\beta-1)\alpha-(\beta-1)\beta=0$
and 
$A_{1}(-\beta/2)\alpha+A_{0}(-\beta/2)\beta=-(\beta-1)\alpha+\beta=0$.
Hence, 
since $\beta$ is a common root of $A_n(x/2)$ and $A_{m}(x/2)$,
we have the following.

\begin{obs}
If there exists $0\le m<n$ such that
$(R_2(R_1R_2)^m)(X_1)=X_2$, then we have 
\[
\Res_x\left(A_n\left(\frac{x}{2}\right),A_{m}\left(\frac{x}{2}\right)\right)=0.
\]
\end{obs}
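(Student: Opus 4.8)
The plan is to convert the orbit condition $(R_2(R_1R_2)^m)(X_1)=X_2$ into two scalar identities in Chebyshev values at $\beta/2$, factor out $A_m(\beta/2)$, and show that this factor is forced to vanish; the resultant statement then follows because $\beta$ is simultaneously a root of $A_m(x/2)$ and $A_n(x/2)$. Throughout I take $K=K_{(\alpha,\beta)}$ with $(\alpha,\beta)\in\Tnkpdist$, so that $A_n(\beta/2)=0$, $f_\kappa(\beta,\alpha)=0$, and the standing hypotheses $\beta\neq 0$, $\beta\neq\pm 2$, $\alpha\neq 2$ are in force.

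First I would apply Lemma~\ref{lem:F G} with $(i,j,k)=(1,2,3)$. Because $R_2(R_1R_2)^m$ fixes the third coordinate $\beta$ of $X_1=(\alpha,\beta,\beta)$, the requirement $(R_2(R_1R_2)^m)(X_1)=X_2=(\beta,\alpha,\beta)$ amounts to $G_m(\beta)\,[\alpha,\beta]^{\mathsf{T}}=[\beta,\alpha]^{\mathsf{T}}$, which unwinds to the two scalar equations $U_{2m}(\beta/2)\alpha=(U_{2m-1}(\beta/2)+1)\beta$ and $(U_{2m+1}(\beta/2)-1)\alpha=U_{2m}(\beta/2)\beta$.

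The next step is to feed these through Lemma~\ref{lem:U by A}, which writes each of $U_{2m}(\beta/2)$, $U_{2m-1}(\beta/2)+1$, and $U_{2m+1}(\beta/2)-1$ as a product of two $A$-values, every one of which carries the factor $A_m(\beta/2)$. After clearing the common sign factors this recasts the system as $A_m(\beta/2)\bigl(A_m(-\beta/2)\alpha+A_{m-1}(-\beta/2)\beta\bigr)=0$ and $A_m(\beta/2)\bigl(A_{m+1}(-\beta/2)\alpha+A_m(-\beta/2)\beta\bigr)=0$. The crux --- and the step I expect to be the main obstacle, exactly as in case (i-a) --- is to exclude $A_m(\beta/2)\neq 0$. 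Assuming it, both bracketed terms vanish; setting $E_j:=A_j(-\beta/2)\alpha+A_{j-1}(-\beta/2)\beta$, the recurrence \eqref{for:recursion formula for Am} evaluated at $-\beta/2$, namely $A_{j+1}(-\beta/2)+\beta A_j(-\beta/2)+A_{j-1}(-\beta/2)=0$, is inherited by the $E_j$ in the form $E_{j+1}+\beta E_j+E_{j-1}=0$. Hence the two consecutive zeros $E_m=E_{m+1}=0$ propagate downward to $E_1=E_2=0$, that is $-(\beta-1)\alpha+\beta=0$ and $(\beta^2-\beta-1)\alpha-(\beta-1)\beta=0$; eliminating $\alpha$ forces $\beta=2$ (with $\beta=1$ already excluded, since then $E_1=1\neq0$), contradicting $\beta\neq\pm 2$. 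The delicate points are the sign bookkeeping when invoking Lemma~\ref{lem:U by A} and verifying that the downward recursion remains valid all the way to the base indices.

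It follows that $A_m(\beta/2)=0$. Together with $A_n(\beta/2)=0$, this exhibits $x=\beta\in\F_p$ as a common root of the two monic polynomials $A_n(x/2)$ and $A_m(x/2)$, whence $\Res_x\bigl(A_n(x/2),A_m(x/2)\bigr)=0$, which is the assertion of the Observation.
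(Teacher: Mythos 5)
Your proposal is correct and follows essentially the same route as the paper: converting the condition via Lemma~\ref{lem:F G} into the two Chebyshev identities, factoring out $A_m(\beta/2)$ via Lemma~\ref{lem:U by A}, and excluding the alternative by propagating the two consecutive vanishing brackets down the recurrence \eqref{for:recursion formula for Am} to reach a contradiction with $\beta\ne 0,\pm 2$. Your explicit elimination of $\alpha$ from $E_1=E_2=0$ to force $\beta=2$ just makes fully explicit the final step that the paper leaves implicit.
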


\end{itemize}

    

Based on these observations, 
we obtain the following result.

\begin{prop}
\label{porp:Kd=Kdd for n=1,2,3}
Let $\kappa\in\mathbb{Z}\setminus\{4\}$
and $n\in\mathbb{Z}_{>0}$ satisfying $2n+1\not\equiv 0 \pmod{p}$.
\begin{itemize}
\item[$(1)$] 
Assume that $\xi_{1,\kappa}=\kappa-4\ne 0$ and 
$\eta_{1,\kappa}=4\kappa-7\ne 0$.
Then  
\[
\mathcal{K}^{\mathrm{all}}_{1,\kappa}(p)
=\mathcal{K}^{\mathrm{dist}}_{1,\kappa}(p)
=\mathcal{K}^{\mathrm{prop}}_{1,\kappa}(p).
\]
\item[$(2)$] 
Assume that 
$\xi_{2,\kappa}=\kappa^2-13 \kappa+11\ne 0$ and 
$\eta_{2,\kappa}=16 \kappa^2-68 \kappa+41\ne 0$.
Then 
\[
\mathcal{K}^{\mathrm{all}}_{2,\kappa}(p)
=\mathcal{K}^{\mathrm{dist}}_{2,\kappa}(p)
=\mathcal{K}^{\mathrm{prop}}_{2,\kappa}(p).
\] 
\item[$(3)$] 
Assume that 
$\xi_{3,\kappa}=\kappa^3-19 \kappa^2+55 \kappa-29\ne 0$, 
$\eta_{3,\kappa}=64 \kappa^3-432 \kappa^2+776 \kappa-239\ne 0$,
and $\kappa^3-26 \kappa^2+111 \kappa-43\ne 0$.
Then, 
\[
\mathcal{K}^{\mathrm{all}}_{3,\kappa}(p)
=\mathcal{K}^{\mathrm{dist}}_{3,\kappa}(p)
=\mathcal{K}^{\mathrm{prop}}_{3,\kappa}(p).
\] 
\end{itemize}
\end{prop}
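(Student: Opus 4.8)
The plan is to establish the two reverse inclusions to the trivial chain $\Knkpprop\subseteq\Knkpdist\subseteq\Knkpall$. The equality $\Knkpall=\Knkpdist$ is immediate from Corollary~\ref{cor:all equal dist}(1): under the standing hypothesis $\xi_{n,\kappa}\eta_{n,\kappa}\ne0$ one has $\Tnkpall=\Tnkpdist$, while the exceptional set $\mathcal{E}_{n,\kappa}(p)$ is empty for $\kappa\ne0,4$ (for the classical value $\kappa=0$ it reduces to the non-distinct zero sextuple, which is discarded). The genuine content is therefore $\Knkpdist\subseteq\Knkpprop$, i.e. that every distinct $K=K_{(\alpha,\beta)}$ with $(\alpha,\beta)\in\Tnkpdist$ is proper. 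By the symmetry reduction recorded just before the statement, this amounts to showing that neither $X_1$ nor $X_2$ is an internal vertex of the $X_1$-$Y_2$ path; equivalently, that none of the four configurations (i-a), (i-b), (ii-a), (ii-b) occurs.

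Here I would invoke the four Observations: each bad configuration forces a specific resultant to vanish, so it suffices to verify that, for every admissible $m$ (ranging over $0<m<n$ or $0\le m<n$ as appropriate), the associated resultant is nonzero in $\Fp$. I would organize the verification according to whether the resultant depends on $\kappa$.

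For cases (i-a) and (ii-b) the quantities $\Res_x(A_n(x/2),U_{m-1}(x/2))$ and $\Res_x(A_n(x/2),A_m(x/2))$ are integers independent of both $\kappa$ and $p$. Using $U_0=A_0=1$, $U_1(x/2)=x$, and the explicit polynomials $A_1,A_2,A_3$ from Section~\ref{sect:chebyshev}, I would evaluate each of these for $0\le m<n\le3$ and check directly that every value equals $\pm1$ (for example $\Res_x(A_3(x/2),A_2(x/2))=-1$ and $\Res_x(A_3(x/2),U_1(x/2))=A_3(0)=-1$). Hence these two families impose no condition and cannot obstruct properness for $n\le3$.

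The heart of the argument is cases (i-b) and (ii-a), whose resultants $\Res_y(B_{n,\kappa}(y),h_{m,\kappa}(y))$ and $\Res_y(B_{n,\kappa}(y),g_{n,m,\kappa}(y))$ are genuine polynomials in $\kappa$. Using the explicit $B_{n,\kappa}$ and $C_{n,\kappa}$ from Example~\ref{ex-Grobner} together with the explicit forms of $h_{m,\kappa}$ and $g_{n,m,\kappa}$, I would compute these resultants by computer algebra and factor them over $\mathbb{Z}[\kappa]$. The point to verify is that each factors, up to a unit, as a product of powers of $\xi_{n,\kappa}$ and $\eta_{n,\kappa}$ (and, for $n=3$ only, the extra cubic $\kappa^3-26\kappa^2+111\kappa-43$), times integer cofactors whose prime divisors are already excluded by $p>3$ and $2n+1\not\equiv0\pmod p$ (concretely $p\ne7$ when $n=3$). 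Granting this, the listed hypotheses force every such resultant to be nonzero in $\Fp$, so no bad configuration arises and $K$ is proper. The main obstacle is precisely this factorization for $n=3$: one must confirm that the full factorizations of the five resultants (from $m\in\{0,1,2\}$ in (i-b) and $m\in\{1,2\}$ in (ii-a)) introduce no $\kappa$-factor beyond the three appearing in the hypothesis, and that all remaining numerical cofactors are supported on excluded primes. For $n=1,2$ the analogous computation should reveal only $\xi_{n,\kappa}$ and $\eta_{n,\kappa}$, matching the stated assumptions.
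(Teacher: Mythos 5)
Your proposal follows essentially the same route as the paper: the equality $\mathcal{K}^{\mathrm{all}}=\mathcal{K}^{\mathrm{dist}}$ comes from Corollary~\ref{cor:all equal dist} under $\xi_{n,\kappa}\eta_{n,\kappa}\ne 0$, and properness is reduced to showing that none of the four configurations (i-a), (i-b), (ii-a), (ii-b) can occur, which is done by verifying the non-vanishing of exactly the resultants you list (the $\kappa$-independent ones being units, the $\kappa$-dependent ones factoring into $(\kappa-4)$-powers, $\xi_{n,\kappa}$, and, for $n=3$, the extra cubic $\kappa^3-26\kappa^2+111\kappa-43$). The only discrepancies are cosmetic: the actual factorizations involve $\xi_{n,\kappa}$ and powers of $\kappa-4$ but never $\eta_{n,\kappa}$, and a sign in your sample evaluation $\Res_x(A_3(x/2),U_1(x/2))$ is off, neither of which affects the argument.
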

\begin{proof}
As shown in Corollary~\ref{cor:all equal dist}, $\Knkpall=\Knkpdist$ if $\xi_{n,\kappa}\eta_{n,\kappa}\ne 0$.
Therefore,
based on our previous observations for each $n=1,2,3$, 
it suffices to show that no such $m$ exists in any of the four cases above, which guarantees that $\Knkpdist=\Knkpprop$. 
\begin{itemize}
\item[(1)]
The case $n=1$.
\begin{itemize}
\item[(i-b)] 
Since $\Res_y(B_{1,\kappa}(y),h_{0,\kappa}(y))=-(\kappa-4)$, 
the assumption ensures that no such $m$ exists.
\item[(ii-b)] 
Since 
$\Res_x(A_1(x/2),A_{0}(x/2))=1$, no such $m$ exists.
\end{itemize} 
\item[(2)]
The case $n=2$.    
\begin{itemize}
\item[(i-a)] 
Since 
$\Res_x(A_2(x/2),U_{0}(x/2))=1$, no such $m$ exists.
\item[(i-b)] 
Since $\Res_y(B_{2,\kappa}(y),h_{0,\kappa}(y))=(\kappa-4)^2$ and 
$\Res_y(B_{2,\kappa}(y),h_{1,\kappa}(y))
=(\kappa-4)^2\xi_{2,\kappa}$,
no such $m$ exists by the assumptions.
\item[(ii-a)] 
Since $\Res_y(B_{2,\kappa}(y),g_{2,1,\kappa}(y))=(\kappa-4)^8$,
no such $m$ exists.
\item[(ii-b)] 
Since 
$\Res_x(A_2(x/2),A_{0}(x/2))=1$ and $\Res_x(A_2(x/2),A_{1}(x/2))=-1$,
no such $m$ exists.
\end{itemize}
\item[(3)]
The case $n=3$.      
\begin{itemize}
\item[(i-a)] 
Since 
$\Res_x(A_3(x/2),U_{0}(x/2))=\Res_x(A_3(x/2),U_{1}(x/2))=1$,
no such $m$ exists.  
\item[(i-b)] 
Since 
\begin{align*}
\Res_y(B_{3,\kappa}(y),h_{0,\kappa}(y))
&=-(\kappa-4)^3,\\
\Res_y(B_{3,\kappa}(y),h_{1,\kappa}(y))
&=(\kappa-4)^3 (\kappa^3-26 \kappa^2+111 \kappa-43),\\
\Res_y(B_{3,\kappa}(y),h_{2,\kappa}(y))
&=(\kappa-4)^3 \xi_{3,\kappa},
\end{align*} 
no such $m$ exists by the assumptions.
\item[(ii-a)] 
Since 
\begin{align*}
\Res_y(B_{3,\kappa}(y),g_{3,1,\kappa}(y))
&=(\kappa-4)^{15} (\kappa^3-26 \kappa^2+111 \kappa-43),\\
\Res_y(B_{3,\kappa}(y),g_{3,2,\kappa}(y))
&=-(\kappa-4)^{21}, 
\end{align*}
no such $m$ exists by the assumptions.
\item[(ii-b)] 
Since 
$\Res_x(A_3(x/2),A_{1}(x/2))=1$ and 
$\Res_x(A_3(x/2),A_{1}(x/2))
=\Res_x(A_3(x/2),A_{2}(x/2))=-1$, 
no such $m$ exists.
\end{itemize}
\end{itemize}
\end{proof}

\begin{remark}
The same discussion cannot be extended to the general case $n\ge 4$.
In fact, direct computation shows that the following resultants vanish:
\begin{align*}
\Res_y(B_{4,\kappa}(y),g_{4,1,\kappa}(y))&=0,\\
\Res_y(B_{7,\kappa}(y),g_{7,m,\kappa}(y))&=0
\quad \text{for $m=1,2,4$,}\\
\Res_y(B_{10,\kappa}(y),g_{10,m,\kappa}(y))&=0
\quad \text{for $m=1,3,4,7$.} 
\end{align*}
\end{remark}

\section{Explicit $K_{3,3}$-subdivisions in $\Gkp$}
\label{sect:non-planarity}

In this section, 
for certain specific $n$ and $p$, 
we show that $\Knkpprop \ne \emptyset$, that is, 
there exists an explicit $K_{3,3}$-subdivision in $\Gkp$ 
via Lemma~\ref{lem:properties of Kndd}.
As a consequence, one can also deduce the non-planarity of $\Gkp$ as well as the existence of short cycles within the graph 
from Corollaries~\ref{cor:sufficient condition non-planarity} and \ref{cor:cycles}, respectively. 

\subsection{The case $n=1$}
\label{subsec:n=1}
We first consider the case $n=1$.
Suppose that $\xi_{1,\kappa}=\kappa-4\ne 0$ and $\eta_{1,\kappa}=4\kappa-7\ne 0$.
Since $B_{1,\kappa}(y)=y^2-y-\kappa+2$
and $C_{1,\kappa}(y)=-1$,
we have from Corollary~\ref{cor:all equal dist}
\begin{equation}
\label{eq-solution-n=1}
T^{\text{all}}_{1,\kappa}(p)
=T^{\text{dist}}_{1,\kappa}(p)
=
\begin{cases}
\left\{\left(\alpha_{\kappa},-1\right),\left(\overline{\alpha_{\kappa}},-1\right)\right\} & \left(\dfrac{\eta_{1,\kappa}}{p}\right)=1, \\[7pt]
\emptyset & \text{otherwise},
\end{cases}
\end{equation}
where $\alpha_{\kappa}\coloneq (1+\sqrt{\eta_{1,\kappa}})/2$
and $\overline{\alpha_{\kappa}}\coloneq (1-\sqrt{\eta_{1,\kappa}})/2$.
Therefore, we have   
\[
\mathcal{K}^{\text{all}}_{1,\kappa}(p)
=\mathcal{K}^{\text{dist}}_{1,\kappa}(p)
=\mathcal{K}^{\text{prop}}_{1,\kappa}(p)
=
\begin{cases}
\left\{
\left.
\sigma (K_{(\alpha_{\kappa},-1)}),
\sigma (K_{(\overline{\alpha_{\kappa}},-1)})
\,\right|\,\sigma\in \Sigma\right\} & \left(\dfrac{\eta_{1,\kappa}}{p}\right)=1,\\[7pt]
\emptyset & \text{otherwise}.
\end{cases}
\]
Thus, by Corollary~\ref{cor:sufficient condition non-planarity},
we have the following result, 
which was observed in Section~11 of \cite{C2024}.

\begin{theorem}
\label{thm-n=1}
Let $\kappa\in\mathbb{Z}\setminus\{4\}$.
Then, there exists an explicit $K_{3,3}$-subdivision in $\Gkp$ 
if $\xi_{1,\kappa}\eta_{1,\kappa}\ne 0$ and $\left(\frac{\eta_{1,\kappa}}{p}\right)=1$.
\end{theorem}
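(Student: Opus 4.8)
The plan is to exhibit a concrete member of $\mathcal{K}^{\mathrm{prop}}_{1,\kappa}(p)$, since by Lemma~\ref{lem:properties of Kndd} every element of this set is automatically a $K_{3,3}$-subdivision in $\Gkp$. The hypotheses $\xi_{1,\kappa}=\kappa-4\ne 0$ and $\eta_{1,\kappa}=4\kappa-7\ne 0$ place us inside the scope of Proposition~\ref{porp:Kd=Kdd for n=1,2,3}~$(1)$, which gives $\mathcal{K}^{\mathrm{prop}}_{1,\kappa}(p)=\mathcal{K}^{\mathrm{dist}}_{1,\kappa}(p)$. By the description of $\mathcal{K}^{\mathrm{dist}}_{1,\kappa}(p)$ in Theorem~\ref{thm:solutions}, this distinct set equals $\{\sigma(K_{(\alpha,\beta)})\mid(\alpha,\beta)\in T^{\mathrm{dist}}_{1,\kappa}(p),\ \sigma\in\Sigma\}$, so (as $\Sigma$ is nonempty) it is nonempty precisely when $T^{\mathrm{dist}}_{1,\kappa}(p)\ne\emptyset$. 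Finally Corollary~\ref{cor:all equal dist} yields $T^{\mathrm{dist}}_{1,\kappa}(p)=T^{\mathrm{all}}_{1,\kappa}(p)$ under the same hypotheses, so the entire statement reduces to producing a single pair in $T^{\mathrm{all}}_{1,\kappa}(p)$.

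To make $T^{\mathrm{all}}_{1,\kappa}(p)$ explicit I would feed the $n=1$ data of Example~\ref{ex-Grobner}, namely $B_{1,\kappa}(y)=y^2-y-\kappa+2$ and $C_{1,\kappa}(y)=-1$, into Corollary~\ref{cor:all equal dist}. Since $2n+1=3\not\equiv 0\pmod{p}$ for $p>3$, that corollary applies and describes a pair $(\alpha,\beta)\in T^{\mathrm{all}}_{1,\kappa}(p)$ by a root $\alpha\in\Fp$ of $B_{1,\kappa}$ together with $\beta=C_{1,\kappa}(\alpha)/(\kappa-4)^{0}=-1$. The constraint $\beta\in\Fpm$ is automatically satisfied because $-1\ne 0$ for every $p>3$, so the sole remaining requirement is that the quadratic $B_{1,\kappa}$ possess a root in $\Fp$.

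The last step is an elementary quadratic-residue computation. The discriminant of $B_{1,\kappa}(y)=y^2-y-\kappa+2$ equals $1-4(2-\kappa)=4\kappa-7=\eta_{1,\kappa}$, so $B_{1,\kappa}$ splits over $\Fp$ if and only if $\eta_{1,\kappa}$ is a square; as $\eta_{1,\kappa}\ne 0$ by hypothesis, this is exactly the condition $\left(\frac{\eta_{1,\kappa}}{p}\right)=1$. Under this condition I would record the explicit root $\alpha_\kappa=(1+\sqrt{\eta_{1,\kappa}})/2$, set $\beta=-1$, and conclude that $K_{(\alpha_\kappa,-1)}\in\mathcal{K}^{\mathrm{prop}}_{1,\kappa}(p)$ furnishes the desired explicit $K_{3,3}$-subdivision (with $\overline{\alpha_\kappa}=(1-\sqrt{\eta_{1,\kappa}})/2$ giving a second one).

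I do not anticipate a real obstacle for $n=1$, because the only genuinely delicate issue in the whole framework---\emph{properness}, i.e.\ ensuring that none of the six triples reappears as an internal vertex of a connecting path---has already been settled upstream. For $n=1$ the cases (i-a) and (ii-a) are vacuous (they demand $0<m<1$), and the remaining cases (i-b) and (ii-b) are ruled out by the nonvanishing resultants $\Res_y(B_{1,\kappa}(y),h_{0,\kappa}(y))=-(\kappa-4)$ and $\Res_x(A_1(x/2),A_0(x/2))=1$ recorded in the proof of Proposition~\ref{porp:Kd=Kdd for n=1,2,3}~$(1)$. Thus what is left to write is merely the transcription of the reduction chain above, so the proof is short: it amounts to the displayed identification of $T^{\mathrm{all}}_{1,\kappa}(p)$ followed by an appeal to Lemma~\ref{lem:properties of Kndd}.
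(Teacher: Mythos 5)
Your proposal is correct and follows essentially the same route as the paper: both reduce the claim, via Proposition~\ref{porp:Kd=Kdd for n=1,2,3}~(1) and Corollary~\ref{cor:all equal dist}, to the solvability of $B_{1,\kappa}(y)=y^2-y-\kappa+2=0$ in $\Fp$ with $\beta=C_{1,\kappa}(\alpha)=-1$, and both observe that the discriminant of this quadratic is exactly $\eta_{1,\kappa}=4\kappa-7$, yielding the explicit pair $(\alpha_\kappa,-1)$ with $\alpha_\kappa=(1+\sqrt{\eta_{1,\kappa}})/2$. The only cosmetic difference is the order in which the reductions $\mathcal{K}^{\mathrm{prop}}=\mathcal{K}^{\mathrm{dist}}$ and $T^{\mathrm{dist}}=T^{\mathrm{all}}$ are invoked, which does not affect the argument.
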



\begin{remark}
\label{rem-n=1}
For 
\[
K_{(\alpha_{\kappa},-1)}
=\left(\begin{array}{ccc}
X_1,\!\! & X_2,\!\! & X_3 \\
Y_1,\!\! & Y_2,\!\! & Y_3
\end{array}\right)
=\left(\begin{array}{ccc}
 (\alpha_{\kappa},-1,-1),\!\! & (-1,\alpha_{\kappa},-1),\!\! & (-1,-1,\alpha_{\kappa}) \\
 (\overline{\alpha_{\kappa}},-1,-1),\!\! & (-1,\overline{\alpha_{\kappa}},-1),\!\! & (-1,-1,\overline{\alpha_{\kappa}})
 \end{array}\right)
\in \mathcal{K}^{\text{prop}}_{1,\kappa}(p),
\]
the $X_i$-$Y_j$-paths in $K_{(\alpha_{\kappa},-1)}$ 
for $1\le i\ne j\le 3$ are explicitly given as follows.
\begin{align*}
X_1
&=(\alpha_{\kappa},-1,-1)
 \overset{R_2}{\mapsto} (\alpha_{\kappa},\overline{\alpha_{\kappa}},-1)
 \overset{R_1}{\mapsto} (-1,\overline{\alpha_{\kappa}},-1)=Y_2,\\  
X_1
&=(\alpha_{\kappa},-1,-1)
 \overset{R_3}{\mapsto} (\alpha_{\kappa},-1,\overline{\alpha_{\kappa}})
 \overset{R_1}{\mapsto} (-1,-1,\overline{\alpha_{\kappa}})=Y_3,\\  
X_2
&=(-1,\alpha_{\kappa},-1)
 \overset{R_1}{\mapsto} (\overline{\alpha_{\kappa}},\alpha_{\kappa},-1)
 \overset{R_2}{\mapsto} (\overline{\alpha_{\kappa}},-1,-1)=Y_1,\\  
X_2
&=(-1,\alpha_{\kappa},-1)
 \overset{R_3}{\mapsto} (-1,\alpha_{\kappa},\overline{\alpha_{\kappa}})
 \overset{R_2}{\mapsto} (-1,-1,\overline{\alpha_{\kappa}})=Y_3,\\  
X_3
&=(-1,-1,\alpha_{\kappa})
 \overset{R_1}{\mapsto} (\overline{\alpha_{\kappa}},-1,\alpha_{\kappa})
 \overset{R_3}{\mapsto} (\overline{\alpha_{\kappa}},-1,-1)=Y_1,\\  
X_3
&=(-1,-1,\alpha_{\kappa})
 \overset{R_2}{\mapsto} (-1,\overline{\alpha_{\kappa}},\alpha_{\kappa})
 \overset{R_3}{\mapsto} (-1,\overline{\alpha_{\kappa}},-1)=Y_2.  
\end{align*}    
See Figure~\ref{fig:n=1} for explicit depictions of these paths in $K_{(\alpha_{\kappa},-1)}$ for certain $\kappa$ and $p$.
\end{remark}


\begin{figure}[H]
\begin{center}
\begin{tabular}{cc}
\begin{minipage}{0.45\linewidth}
\centering
\includegraphics[width=0.8\linewidth]{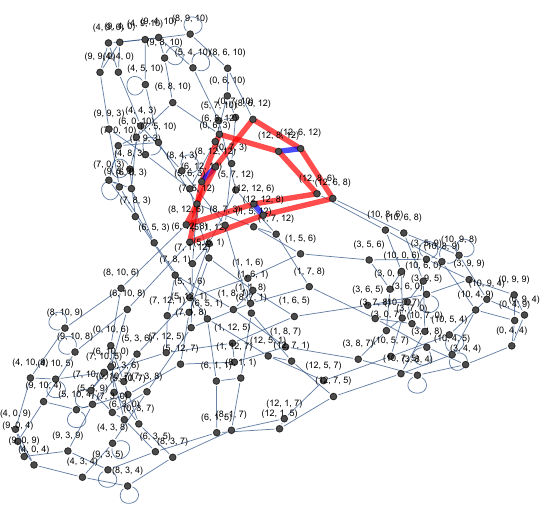}
\end{minipage}
\quad & \quad 
\begin{minipage}{0.45\linewidth}
\centering
\includegraphics[width=0.8\linewidth]{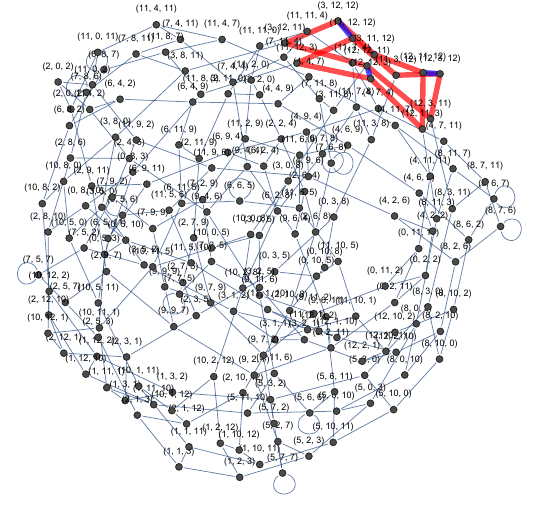}
\end{minipage}
\end{tabular}
\end{center}
\caption{Illustrations of $K_{(\alpha_{\kappa},-1)}\in \mathcal{K}^{\text{prop}}_{1,\kappa}(p)$ 
for $(\kappa,p)=(6,13)$ (left), and $(\kappa,p)=(8,13)$ (right).
The $X_i$-$Y_j$-paths (of length $2$)
and the $X_i$-$Y_i$ paths in $K_{(\alpha_{\kappa},-1)}$ 
are depicted using red and blue edges, respectively, 
for $1\le i,j\le 3$.
}
\label{fig:n=1}
\end{figure}

\subsection{The case $n=2$}

We next consider the case $n=2$.
Suppose that 
$\xi_{2,\kappa}=\kappa^2-13 \kappa+11\ne 0$ and 
$\eta_{2,\kappa}=16 \kappa^2-68 \kappa+41\ne 0$.
Then, any $(\alpha,\beta)\in T^{\text{dist}}_{2,\kappa}(p)$ 
must satisfy $B_{2,\kappa}(\alpha)=0$, where 
\[
 B_{2,\kappa}(y)=y^4-3y^3-(2 \kappa-7) y^2+(3 \kappa-4) y+\kappa^2-6\kappa+4.
\]
We first investigate the conditions under which the quartic equation
$B_{2,\kappa}(y)=0$ has a solution in $\Fp$.
It should be noted that, by Corollary~\ref{cor:all equal dist} (2),
the solutions of this equation appear in pairs; therefore, if a solution exists, there are either $2$ or $4$ solutions in $\Fp$.


Let $f(x)=x^4+ax^2+bx+c\in \Fp[x]$
be a quartic polynomial with discriminant  
\[
D(f(x))=-(4a^3+27b^2)b^2+16c(a^4+9ab^2-8a^2c+16c^2).
\]
We denote by $N_p(f(x))$ the number of solutions of $f(x)=0$ in $\Fp$, counted with multiplicity.
Let $\{S_n\}_{n\ge 0}$ be the sequence 
defined by the linear recurrence relation
\begin{align}
\label{def:sequence Sn}
\begin{split}
 S_0&=3, \quad S_1=-2a, \quad S_2=2a^2+8c, \\
  S_{n+3}&=-2aS_{n+2}+(4c-a^2)S_{n+1}+b^2S_n \quad (n\ge 0).    
\end{split}
\end{align}
Then, in the generic case where $(a^2+12c)bD(f(x))\ne 0$, 
the number of solutions $N_p(f(x))$ can be described in terms of $S_{n}$, as established in \cite[Theorems~5.5 and 5.7]{S2003}:
\begin{equation}
\label{for:Npfx}
 N_p(f(x))
=
\begin{cases}
4 & \text{$S_{p+1}=S_2$ and $S_{(p-1)/2}=3$},\\[3pt]
2 & \text{$S_{p+1}\ne a^2-4c,S_2$ and $\left(\dfrac{\mu_p}{p}\right)=1$ where 
$\mu_p=\dfrac{4a^3-16ac+9b^2-2aS_{p+1}}{-5a^2-12c+3S_{p+1}}$}, \\[3pt]
1 & S_{p+1}=a^2-4c,\\
0 & \text{otherwise}.
\end{cases}
\end{equation}
Notice that the condition $a^2+12c\ne 0$ implies that $S_2\ne a^2-4c$.



\begin{prop}
\label{prop:n=2}
Let $\kappa\in\mathbb{Z}\setminus\{4\}$.
Assume that 
$\xi_{2,\kappa}\eta_{2,\kappa}\ne 0$ 
and $5(\kappa^2-73\kappa+61)\ne 0$.
Then, we have 
\[
 N_p(B_{2,\kappa}(y))
=
\begin{cases}
4 & \text{$\left(\dfrac{\eta_{2,\kappa}}{p}\right)=1$ and $\left(\dfrac{5}{p}\right)
=\left(\dfrac{8\kappa-17+2\sqrt{\eta_{2,\kappa}}}{p}\right)
=\left(\dfrac{8\kappa-17-2\sqrt{\eta_{2,\kappa}}}{p}\right)=1$},\\[10pt]
2 & \text{$\left(\dfrac{\eta_{2,\kappa}}{p}\right)=-1$ and $\left(\dfrac{5}{p}\right)=1$}, \\[7pt]
0 & \text{otherwise}.
\end{cases}
\]
\end{prop}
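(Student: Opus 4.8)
The plan is to exploit the fact that $A_2(x/2)=x^2+x-1$ is a \emph{monic quadratic}, so that the resultant defining $B_{2,\kappa}$ splits completely into the two evaluations of $f_\kappa$ at the roots of $A_2(x/2)$. Writing $\beta_\pm=(-1\pm\sqrt5)/2$ for these roots and using $\Res_x(A_2(x/2),f_\kappa(x,y))=f_\kappa(\beta_+,y)f_\kappa(\beta_-,y)$, I get the factorization $B_{2,\kappa}(y)=f_\kappa(\beta_+,y)f_\kappa(\beta_-,y)$ into two monic quadratics in $y$, whose $y^3$- and constant coefficients reproduce those of $B_{2,\kappa}$ after using $\beta_++\beta_-=-1$ and $\beta_+\beta_-=-1$. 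This reduces counting $N_p(B_{2,\kappa}(y))$ to counting roots of two quadratics, and it immediately explains the appearance of $(\frac5p)$: since $A_2(x/2)$ has discriminant $5$, the $\beta_\pm$ lie in $\Fp$ exactly when $5$ is a square.

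First I would dispose of the case $(\frac5p)=-1$. Then $\beta_+,\beta_-\notin\Fp$ are Frobenius-conjugate, so $f_\kappa(\beta_+,y)$ and $f_\kappa(\beta_-,y)$ have coefficients in $\mathbb{F}_{p^2}$ and are interchanged by Frobenius. If $\alpha\in\Fp$ were a root of $B_{2,\kappa}$, then applying Frobenius to $f_\kappa(\beta_+,\alpha)=0$ forces $f_\kappa(\beta_-,\alpha)=0$ as well; subtracting the two relations gives $(\beta_-^2-\beta_+^2)(\alpha-2)=0$, hence $\alpha=2$, which makes $f_\kappa(\beta_\pm,2)=4-\kappa=0$ and contradicts $\kappa\neq4$. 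Thus $N_p(B_{2,\kappa})=0$, placing this case in the final branch.

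Next, for $(\frac5p)=1$ I would work with $\beta_\pm\in\Fp$ directly. The same computation shows the two quadratics share no $\Fp$-root unless $\kappa=4$, so $N_p(B_{2,\kappa})$ is the sum of the root counts of $f_\kappa(\beta_+,y)$ and $f_\kappa(\beta_-,y)$. Each contributes $2$ or $0$ roots according to whether its discriminant $\Delta(\beta_i):=\beta_i^4-8\beta_i^2+4\kappa$ is a nonzero square, the nonvanishing being guaranteed by $\eta_{2,\kappa}\neq0$. Reducing modulo $\beta_i^2=1-\beta_i$ yields the two key identities $\Delta(\beta_+)+\Delta(\beta_-)=8\kappa-17$ and $\Delta(\beta_+)\Delta(\beta_-)=\eta_{2,\kappa}$ (the latter being exactly the resultant formula for $\eta_{2,\kappa}$). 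Hence when $(\frac{\eta_{2,\kappa}}{p})=-1$ precisely one of $\Delta(\beta_\pm)$ is a square, so $N_p=2$; when $(\frac{\eta_{2,\kappa}}{p})=1$ the two discriminants are both squares or both nonsquares.

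Separating these last two subcases is the only delicate point, and it is where I expect the main obstacle. Here I would use the telescoping identity $(\sqrt{\Delta(\beta_+)}\pm\sqrt{\Delta(\beta_-)})^2=(8\kappa-17)\pm2\sqrt{\eta_{2,\kappa}}$, valid once $\sqrt{\eta_{2,\kappa}}\in\Fp$. If both $\Delta(\beta_\pm)$ are squares then so are both right-hand sides; conversely, if both were nonsquares then $\sqrt{\Delta(\beta_\pm)}\in\mathbb{F}_{p^2}\setminus\Fp$ are negated by Frobenius, forcing $(8\kappa-17)+2\sqrt{\eta_{2,\kappa}}$ to be a nonsquare. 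So both discriminants are squares precisely when $(\frac{8\kappa-17+2\sqrt{\eta_{2,\kappa}}}{p})=(\frac{8\kappa-17-2\sqrt{\eta_{2,\kappa}}}{p})=1$, giving $N_p=4$, and otherwise $N_p=0$; a useful consistency check is that the product of these two bracketed quantities equals $(8\kappa-17)^2-4\eta_{2,\kappa}=125$, so the two conditions already force $(\frac5p)=1$. The anticipated obstacles are bookkeeping the sign of $\sqrt{\eta_{2,\kappa}}$ consistently through the Frobenius argument and checking that the nondegeneracy hypotheses ($\eta_{2,\kappa}\neq0$, $\kappa\neq4$, $p\neq5$) exclude multiple or shared roots. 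One could alternatively route the same data ($D(f)$, the resolvent cubic, and $S_{p+1}$, $S_{(p-1)/2}$) through \eqref{for:Npfx}, with the stated hypotheses $\xi_{2,\kappa}\eta_{2,\kappa}\neq0$ and $5(\kappa^2-73\kappa+61)\neq0$ serving as Sun's genericity condition $(a^2+12c)bD(f)\neq0$; the factorization route above, however, avoids computing the sequence $S_n$ entirely.
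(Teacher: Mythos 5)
Your proof is correct, but it takes a genuinely different route from the paper's. The paper shifts $B_{2,\kappa}$ to a depressed quartic $y^4+ay^2+by+c$ and invokes Sun's root-counting criterion \eqref{for:Npfx}, computing the associated recurrence in closed form as $S_n=(5/4)^n+\bigl((8\kappa-17+2\sqrt{\eta_{2,\kappa}})/4\bigr)^n+\bigl((8\kappa-17-2\sqrt{\eta_{2,\kappa}})/4\bigr)^n$ and evaluating $S_{p+1}$ and $S_{(p-1)/2}$ via Frobenius and Euler's criterion; the hypotheses $b\ne 0$ and $a^2+12c\ne 0$, i.e. $5(\kappa^2-73\kappa+61)\ne 0$, are exactly Sun's genericity conditions. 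You instead exploit the resultant structure directly: since $A_2(x/2)=x^2+x-1$ is monic, $B_{2,\kappa}(y)=f_\kappa(\beta_+,y)f_\kappa(\beta_-,y)$ splits into two monic quadratics over $\F_p(\sqrt{5})$, and the count reduces to quadratic-residue bookkeeping for the discriminants $\Delta(\beta_\pm)$; I checked that the factorization reproduces $B_{2,\kappa}$, that $\Delta(\beta)=5\beta+4\kappa-6$ so the sum and product are indeed $8\kappa-17$ and $\eta_{2,\kappa}$, and that the shared-root and Frobenius arguments (forcing $\alpha=2$, hence $\kappa=4$) are sound. Your route is more self-contained and conceptually transparent --- it explains where $\sqrt{5}$ and $8\kappa-17\pm2\sqrt{\eta_{2,\kappa}}$ come from, and the identity $(8\kappa-17)^2-4\eta_{2,\kappa}=125$ both rules out the degenerate vanishing of $(\delta_++\delta_-)^2$ in the two-nonsquares subcase and shows that the four-root condition already forces $\left(\frac{5}{p}\right)=1$. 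It also needs strictly weaker hypotheses: only $p\ne 5$, $\eta_{2,\kappa}\ne 0$ and $\kappa\ne 4$ enter your argument, whereas $\xi_{2,\kappa}\ne 0$ and $\kappa^2-73\kappa+61\ne 0$ are used by the paper only to meet Sun's genericity condition (and, via Corollary~\ref{cor:all equal dist}, for the later application). What the paper's route buys is a uniform appeal to a general quartic criterion that does not depend on the quartic factoring over a quadratic subextension; what yours buys is elementarity and sharper hypotheses.
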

\begin{proof}
Let
$f_{2,\kappa}(y)\coloneq B_{2,\kappa}\left(y+3/4\right)=y^4+ay^2+by+c$,
where 
$a=-(16\kappa-29)/8$, 
$b=25/8$ and 
$c=(256 \kappa^2-1248\kappa+1021)/256$.
Here, all rational coefficients are interpreted in $\Fp$. 
Since $a^2+12c=\kappa^2-73\kappa+61$ and 
$D(f_{2,\kappa}(y))=25(\kappa-4)^2\eta_{2,\kappa}$, 
our assumptions ensure that we are in the generic case 
in the sense described above.
Hence, we can apply the above results. 
In this case, the sequence $\{S_n\}_{n\ge 0}$ defined in \eqref{def:sequence Sn} is given by 
\begin{align*}
\begin{split}
 S_0&=3, \quad
 S_1=\frac{1}{4}(16\kappa-29), \quad
 S_2=\frac{1}{16}(16\kappa-49)(16\kappa-19), \\[5pt]
  S_{n+3}&
  =\frac{1}{4}(16\kappa-29)S_{n+2}
  -\frac{5}{16}(16\kappa-9)S_{n+1}
  +\frac{625}{64}S_n \quad (n\ge 0).
\end{split}
\end{align*}
Choose any $\sqrt{\eta_{2,\kappa}}\in\F_{p^2}$.
Then, we have  
\[
 S_n=
 \left(\frac{5}{4}\right)^n
 +\left(\frac{8\kappa-17+2\sqrt{\eta_{2,\kappa}}}{4}\right)^n
 +\left(\frac{8\kappa-17-2\sqrt{\eta_{2,\kappa}}}{4}\right)^n.
\]
\begin{itemize}
\item[(i)]
If $\sqrt{\eta_{2,\kappa}}\in \Fp$, 
then, since $(\sqrt{\eta_{2,\kappa}})^p=\sqrt{\eta_{2,\kappa}}$, we have 
\begin{align*}
 S_{p+1}
=\left(\frac{5}{4}\right)^2
 +\left(\frac{8\kappa-17+2\sqrt{\eta_{2,\kappa}}}{4}\right)^2
 +\left(\frac{8\kappa-17-2\sqrt{\eta_{2,\kappa}}}{4}\right)^2
=S_2.
\end{align*}
Moreover, from the Euler criterion, we have 
\begin{align*}
 S_{(p-1)/2}
&=\left(\frac{5}{4}\right)^{\frac{p-1}{2}}+\left(\frac{8\kappa-17+2\sqrt{\eta_{2,\kappa}}}{4}\right)^{\frac{p-1}{2}}+\left(\frac{8\kappa-17-2\sqrt{\eta_{2,\kappa}}}{4}\right)^{\frac{p-1}{2}}\\
&=5^{\frac{p-1}{2}}+\left(8\kappa-17+2\sqrt{\eta_{2,\kappa}}\right)^{\frac{p-1}{2}}+\left(8\kappa-17-2\sqrt{\eta_{2,\kappa}}\right)^{\frac{p-1}{2}}\\
&=\left(\frac{5}{p}\right)
+\left(\frac{8\kappa-17+2\sqrt{\eta_{2,\kappa}}}{p}\right)
+\left(\frac{8\kappa-17-2\sqrt{\eta_{2,\kappa}}}{p}\right).
\end{align*}
Therefore, 
$S_{(p-1)/2}=3$ if and only if 
$\left(\frac{5}{p}\right)
=\left(\frac{8\kappa-17+2\sqrt{\eta_{2,\kappa}}}{p}\right)
=\left(\frac{8\kappa-17-2\sqrt{\eta_{2,\kappa}}}{p}\right)=1$.
\item[(ii)] 
If $\sqrt{\eta_{2,\kappa}}\notin \Fp$,
then since $(\sqrt{\eta_{2,\kappa}})^p=-\sqrt{\eta_{2,\kappa}}$, we have 
\begin{align*}
 S_{p+1}
=\left(\frac{5}{4}\right)^2+2\left(\frac{8\kappa-17+2\sqrt{\eta_{2,\kappa}}}{4}\right)\left(\frac{8\kappa-17-2\sqrt{\eta_{2,\kappa}}}{4}\right) 
=\frac{275}{16}.
\end{align*}
Hence, 
$S_{p+1}\ne a^2-4c=5(16\kappa-9)/16$ since $\kappa\ne 4$, and 
$S_{p+1}\ne S_2$ since $\eta_{2,\kappa}\ne 0$. 
Moreover, a direct calculation shows that
\[
\mu_p=\frac{4a^3-16ac+9b^2-2aS_{p+1}}{-5a^2-12c+3S_{p+1}}
=\frac{5}{4}.
\]
Consequently, $\left(\frac{\mu_p}{p}\right)=1$ if and only if 
$\left(\frac{5}{p}\right)=1$.
\end{itemize}
This completes the proof from \eqref{for:Npfx}.
\end{proof}

From Propositions~\ref{prop:n=2} and \ref{porp:Kd=Kdd for n=1,2,3},
we immediately obtain the following theorem.

\begin{theorem}
\label{thm:n=2 non-planarity}
Let $\kappa\in\mathbb{Z}\setminus\{4\}$.
Then, there exists an explicit $K_{3,3}$-subdivision in $\Gkp$ if $\xi_{2,\kappa}\eta_{2,\kappa}\ne 0$, $5(\kappa^2-73\kappa+61)\ne 0$, 
and either of the following conditions holds. 
\begin{itemize}
\item[$\mathrm{(A)}$] 
$\left(\dfrac{\eta_{2,\kappa}}{p}\right)=1$ and 
$\left(\dfrac{5}{p}\right)
=\left(\dfrac{8\kappa-17+2\sqrt{\eta_{2,\kappa}}}{p}\right)
=\left(\dfrac{8\kappa-17-2\sqrt{\eta_{2,\kappa}}}{p}\right)=1$, or,
\\[0pt]
\item[$\mathrm{(B)}$] 
$\left(\dfrac{\eta_{2,\kappa}}{p}\right)=-1$ and $\left(\dfrac{5}{p}\right)=1$.
\end{itemize}
\end{theorem}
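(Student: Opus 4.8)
The plan is to combine the two main ingredients already established earlier in the excerpt: the structural equivalence $\mathcal{K}^{\mathrm{all}}_{2,\kappa}(p)=\mathcal{K}^{\mathrm{dist}}_{2,\kappa}(p)=\mathcal{K}^{\mathrm{prop}}_{2,\kappa}(p)$ from Proposition~\ref{porp:Kd=Kdd for n=1,2,3}~(2), and the root-counting formula for $B_{2,\kappa}(y)=0$ from Proposition~\ref{prop:n=2}. The logical thread is short: by Corollary~\ref{cor:all equal dist}, whenever $\xi_{2,\kappa}\eta_{2,\kappa}\ne 0$ the set $T^{\mathrm{dist}}_{2,\kappa}(p)$ is nonempty precisely when $B_{2,\kappa}(y)=0$ has a root $\alpha\in\F_p$ (the companion $\beta$ is then determined by $\beta=C_{2,\kappa}(\alpha)/(\kappa-4)$, and lies in $\Fpm$). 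So the whole theorem reduces to showing $N_p(B_{2,\kappa}(y))\ge 1$ under the stated hypotheses.

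First I would invoke Proposition~\ref{prop:n=2}, whose applicability requires exactly the running assumptions $\xi_{2,\kappa}\eta_{2,\kappa}\ne 0$ and $5(\kappa^2-73\kappa+61)\ne 0$ (the latter is the genericity condition $a^2+12c\ne 0$ together with $5\ne 0$ in $\F_p$). That proposition gives $N_p(B_{2,\kappa}(y))=4$ under condition~(A) and $N_p(B_{2,\kappa}(y))=2$ under condition~(B); in either case there is at least one root $\alpha\in\F_p$. I would then note that any such $\alpha$ yields a genuine element $(\alpha,\beta)\in T^{\mathrm{dist}}_{2,\kappa}(p)$ via Corollary~\ref{cor:all equal dist}~(1), hence $\mathcal{K}^{\mathrm{dist}}_{2,\kappa}(p)\ne\emptyset$, and by Proposition~\ref{porp:Kd=Kdd for n=1,2,3}~(2) this equals $\mathcal{K}^{\mathrm{prop}}_{2,\kappa}(p)\ne\emptyset$.

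To finish I would appeal to Lemma~\ref{lem:properties of Kndd}, which guarantees that every element of $\mathcal{K}^{\mathrm{prop}}_{2,\kappa}(p)$ is a $K_{3,3}$-subdivision in $\Gkp$; this subdivision is moreover explicit, since $K_{(\alpha,\beta)}$ is given coordinate-wise in \eqref{def:K alpha beta} and the connecting paths are listed in the definition preceding Lemma~\ref{lem:properties of Kndd}. One small point of bookkeeping is that the hypotheses of Proposition~\ref{porp:Kd=Kdd for n=1,2,3}~(2) are verbatim $\xi_{2,\kappa}\ne 0$ and $\eta_{2,\kappa}\ne 0$, which are subsumed by $\xi_{2,\kappa}\eta_{2,\kappa}\ne 0$, so no additional condition is introduced beyond those already named in the theorem statement. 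I should also silently verify that the condition $2n+1\not\equiv 0\pmod p$ for $n=2$, i.e. $p\ne 5$, is implied by the standing hypothesis $5\ne 0$ in $\F_p$ coming from $5(\kappa^2-73\kappa+61)\ne 0$.

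Since all the genuine work — the resultant computations establishing properness, and the delicate $S_n$-recurrence analysis counting roots via the Chebotarev-flavored criterion of \cite{S2003} — has been discharged in the two cited propositions, there is essentially no obstacle here; the theorem is a clean corollary. The only thing demanding care is matching hypotheses exactly between the three cited results so that no gap opens up, but this is routine verification rather than a conceptual difficulty.
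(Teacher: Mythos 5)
Your proposal is correct and follows exactly the paper's route: the paper derives this theorem immediately from Proposition~\ref{prop:n=2} (root count of $B_{2,\kappa}$) and Proposition~\ref{porp:Kd=Kdd for n=1,2,3} (2), with the passage through Corollary~\ref{cor:all equal dist} and Lemma~\ref{lem:properties of Kndd} that you make explicit. Your bookkeeping points (that $p\ne 5$ follows from $5\ne 0$ in $\F_p$, and that $\beta\ne 0$ is automatic since $A_2(0)=-1$) are accurate and simply spell out what the paper leaves implicit.
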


\begin{remark}
\label{rem-n=2}
It is clear that 
primes satisfying condition (B) in Theorem~\ref{thm:n=2 non-planarity} can be characterized by congruence relations modulo $5|\eta_{2,\kappa}|$, where $\eta_{2,\kappa}$ is regarded as an integer in only this context. 
On the other hand, 
primes satisfying condition (A) seems to be
characterized by their representation by certain quadratic forms with discriminant $5\eta_{2,\kappa}$ when it is not a square 
(for related results, see, e.g. \cite{SW1992}).
In the following, 
we provide lists of primes $p$ satisfying each condition for small $\kappa$. For convenience, we put
$W_{\kappa}\coloneq 5(\kappa^2-73\kappa+61)\xi_{2,\kappa}\eta_{2,\kappa} \in \F_p$.
\begin{itemize}
\item
When $\kappa=0$,
except for primes $p>3$ satisfying  
$W_0\ne 0$,
that is, $p\ne 5,11,41,61$,
they are 
\begin{itemize}
\item[(A)] 
$p=59$, $131$, $139$, $241$, $269$, $271$, $359$, $409$, $541$, $569$, $599$, $661$, $701$, $761$, $859$, $881$, $911$, $941$, $\ldots$.
Such a prime $p$ is represented by the reduced quadratic forms $x^2-13xy-9y^2$ or $5x^2-5xy-9y^2$ with discriminant $205$.
These two forms appear to represent the same set of integers.
\item[(B)]
$p\equiv \pm 6$, $\pm 11$, $\pm 14$, $\pm 19$, $\pm 24$, $\pm 26$, $\pm 29$, $\pm 34$, $\pm 44$, $\pm 54$, $\pm 56$, $\pm 69$, $\pm 71$, $\pm 76$, $\pm 79$, $\pm 89$, $\pm 94$, $\pm 96$, $\pm 99$, $\pm 101$ 
 $\pmod{205}$.
\end{itemize} 
\item
When $\kappa=1$,
except for primes $p>3$ satisfying  
$W_1\ne 0$, that is, $p\ne 5,11$,
they are 
\begin{itemize}
\item[(A)] 
$p=59$, $71$, $199$, $229$, $251$, $269$, $311$, $379$, $389$, $499$, $509$, $631$, $661$, $691$, $751$, $839$, $881$, $929$, $\ldots$.
Such a prime $p$ is represented by the reduced quadratic form $x^2+xy+14y^2$ with discriminant $-55$.
\item[(B)]
$p\equiv 6$, $19$, $21$, $24$, $29$, $39$, $41$, $46$, $51$, $54$ $\pmod{55}$.
\end{itemize} 
\item
When $\kappa=2$,
except for primes $p>3$ satisfying  
$W_2\ne 0$, that is, $p\ne 5,11,31$,
they are 
\begin{itemize}
\item[(A)] 
$p=41$, $59$, $149$, $191$, $311$, $349$, $379$, $419$, $421$, $439$, $479$, $691$, $701$, $769$, $811$, $971$, $\ldots$.
Such a prime $p$ is represented by the reduced quadratic form $x^2+xy+39y^2$ with discriminant $-155$.
\item[(B)]
$p\equiv 6$, $11$, $21$, $24$, $26$, $29$, $34$, $44$, $46$, $54$, $61$, $74$, $79$, $84$, $86$, $89$, $91$, 
$96$, $99$, $104$, $106$, $114$, $116$, $119$, $136$, $139$, $141$, $146$, $151$, $154$ $\pmod{155}$.
\end{itemize} 
\item
When $\kappa=3$,
except for primes $p>3$ satisfying  
$W_3\ne 0$, that is, $p\ne 5,19,149$,
they are 
\begin{itemize}
\item[(A)] 
$p=19$, $131$, $191$, $199$, $239$, $251$, $349$, $389$, $419$, $461$, $491$, $709$, $739$, $809$, $821$, $859$, $919$, $929$, $\ldots$.
Such a prime $p$ is represented by the reduced quadratic forms $x^2+xy+24y^2$ or $5x^2+5xy+6y^2$ with discriminants $-95$.
\item[(B)]
$p\equiv 14$, $21$, $29$, $31$, $34$, $41$, $46$, $51$, $56$, $59$, $69$, $71$, $79$, $84$, $86$, $89$, $91$, $94$ $\pmod{95}$.
\end{itemize} 
\end{itemize}
\end{remark}

{\small 
\begin{table}[!ht]
\label{table:T2}
\begin{center}
\begin{tabular}{|c||c|c|c|c|}
\hline
$p$ & $T^{\text{dist}}_{2,0}(p)$ & $T^{\text{dist}}_{2,1}(p)$ & $T^{\text{dist}}_{2,2}(p)$ & $T^{\text{dist}}_{2,3}(p)$ \\
\hline
\hline
    $5$  & $\ast\ast\ast$
    & $\ast\ast\ast$
    & $\ast\ast\ast$
    & $\ast\ast\ast$
    \\
    \hline
    $7$  & $\emptyset$ & $\emptyset$ & $\emptyset$ & $\emptyset$ \\
    \hline
    $11$ & 
    $\ast\ast\ast$
    & 
    $\ast\ast\ast$
    & 
    $\ast\ast\ast$
    & $\emptyset$ \\
    \hline
    $13$ & $\emptyset$ & $\emptyset$ & $\emptyset$ & $\emptyset$ \\
    \hline
    $17$ & $\emptyset$ & $\emptyset$ & $\emptyset$ & $\emptyset$ \\
    \hline
    $19$ & $\{(7,14), (18,14)\}$ & $\{(9,14), (16,14)\}$ & $\emptyset$ &
    $\ast\ast\ast$
    \\
    \hline
    $23$ & $\emptyset$ & $\emptyset$ & $\emptyset$ & $\emptyset$ \\
    \hline
    $29$ & $\{(11,23), (25,23)\}$ & $\{(8,5), (17,5)\}$ & $\{(3,23), (4,23)\}$ & $\{(9,23), (27,23)\}$ \\
    \hline
    $31$ & $\emptyset$ & $\emptyset$ & 
    $\ast\ast\ast$ 
    & $\{(9,12), (11,12)\}$ \\
    \hline
    $37$ & $\emptyset$ & $\emptyset$ & $\emptyset$ & $\emptyset$ \\
    \hline
    $41$ & 
    $\ast\ast\ast$
    & $\{(3,34), (5,34)\}$ & 
    \begin{tabular}{c}
$\{(11,6), (25,6),$ \\
\ \ $(21,34), (28,34)\}$
\end{tabular} 
     & $\{(15,6), (21,6)\}$ \\
    \hline
    $43$ & $\emptyset$ & $\emptyset$ & $\emptyset$ & $\emptyset$ \\
    \hline
    $47$ & $\emptyset$ & $\emptyset$ & $\emptyset$ & $\emptyset$ \\
    \hline
\end{tabular}
\caption{The sets $T^{\text{dist}}_{2,\kappa}(p)$ for $\kappa=0,1,2,3$. 
Here, the symbol $\ast\ast\ast$ indicates that
$W_\kappa\coloneq 5(\kappa^2-73\kappa+61)\xi_{2,\kappa}\eta_{2,\kappa}=0$.}
\end{center}
\end{table}
}

\begin{remark}
For $K_{(\alpha,\beta)}
=\left(\begin{array}{ccc}
X_1,\!\! & X_2,\!\! & X_3 \\
Y_1,\!\! & Y_2,\!\! & Y_3
\end{array}\right)
\in \mathcal{K}^{\text{prop}}_{2,\kappa}(p)$,
the $X_i$-$Y_j$-paths in $K_{(\alpha,\beta)}$ 
for $1 \le i\ne j\le 3$ are explicitly given as follows.
\begin{align*}
X_1
&=(\alpha,\beta,\beta)
 \overset{R_2}{\mapsto}
 (\alpha,\beta(\alpha-1),\beta)
 \overset{R_1}{\mapsto} (\beta(\overline{\alpha}-1),\beta(\alpha-1),\beta) \overset{R_2}{\mapsto} (\beta(\overline{\alpha}-1),\overline{\alpha},\beta) 
 \overset{R_1}{\mapsto}
 (\beta,\overline{\alpha},\beta)=Y_2,\\  
X_1
&=(\alpha,\beta,\beta)
 \overset{R_3}{\mapsto}
 (\alpha,\beta,\beta(\alpha-1))
 \overset{R_1}{\mapsto} (\beta(\overline{\alpha}-1),\beta,\beta(\alpha-1)) \overset{R_3}{\mapsto} (\beta(\overline{\alpha}-1),\beta,\overline{\alpha}) 
 \overset{R_1}{\mapsto}
 (\beta,\beta,\overline{\alpha})=Y_3,\\  
X_2
&=(\beta,\alpha,\beta)
 \overset{R_1}{\mapsto}
 (\beta(\alpha-1),\alpha,\beta)
 \overset{R_2}{\mapsto}
 (\beta(\alpha-1),\beta(\overline{\alpha}-1),\beta)
 \overset{R_1}{\mapsto} (\overline{\alpha},\beta(\overline{\alpha}-1),\beta,) 
 \overset{R_2}{\mapsto}
 (\overline{\alpha},\beta,\beta)=Y_1,\\  
X_2
&=(\beta,\alpha,\beta)
 \overset{R_3}{\mapsto}
 (\beta,\alpha,\beta(\alpha-1))
 \overset{R_2}{\mapsto} (\beta,\beta(\overline{\alpha}-1),\beta(\alpha-1)) \overset{R_3}{\mapsto} (\beta,\beta(\overline{\alpha}-1),\overline{\alpha}) 
 \overset{R_2}{\mapsto}
 (\beta,\beta,\overline{\alpha})=Y_3,\\  
X_3
&=(\beta,\beta,\alpha)
 \overset{R_1}{\mapsto}
 (\beta(\alpha-1),\beta,\alpha)
 \overset{R_3}{\mapsto}
 (\beta(\alpha-1),\beta,\beta(\overline{\alpha}-1))
 \overset{R_1}{\mapsto} (\overline{\alpha},\beta,\beta(\overline{\alpha}-1)) 
 \overset{R_3}{\mapsto}
 (\overline{\alpha},\beta,\beta)=Y_1,\\  
X_3
&=(\beta,\beta,\alpha)
 \overset{R_2}{\mapsto}
 (\beta,\beta(\alpha-1),\alpha)
 \overset{R_3}{\mapsto}
 (\beta,\beta(\alpha-1),\beta(\overline{\alpha}-1))
 \overset{R_2}{\mapsto} (\beta,\overline{\alpha},\beta(\overline{\alpha}-1))
 \overset{R_3}{\mapsto}
 (\beta,\overline{\alpha},\beta)=Y_2.  
\end{align*}    
See Figure~\ref{fig:n=2} for explicit depictions of these paths in $K_{(\alpha,\beta)}$ for certain $\kappa$ and $p$.
\end{remark}

\begin{figure}[H]
\begin{center}
\begin{tabular}{cc}
\begin{minipage}{0.45\linewidth}
\centering
\includegraphics[width=0.8\linewidth]{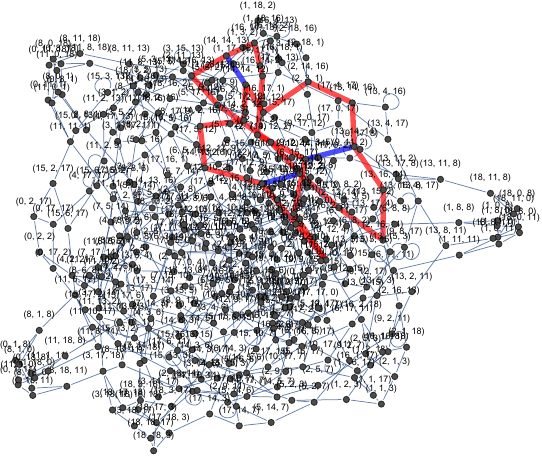}
\end{minipage}
\quad & \quad 
\begin{minipage}{0.45\linewidth}
\centering
\includegraphics[width=0.8\linewidth]{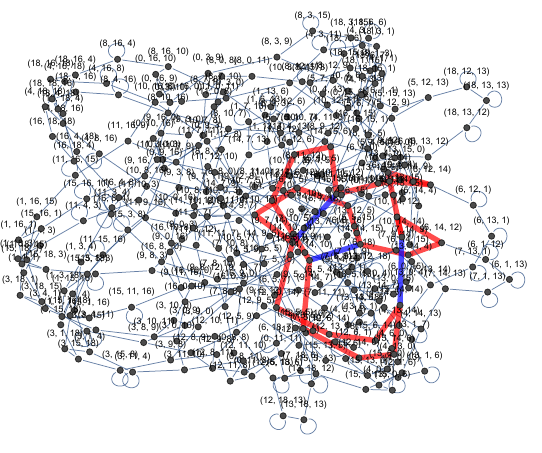}
\end{minipage}
\end{tabular}
\end{center}
\caption{Illustrations of $K_{(\alpha,\beta)}\in \mathcal{K}^{\text{prop}}_{2,\kappa}(p)$ 
for $(\kappa,p)=(8,19)$ with $(\alpha,\beta)=(12,14)$ (left),
and $(\kappa,p)=(-5,19)$ with $(\alpha,\beta)=(10,14)$ (right).
The $X_i$-$Y_j$-paths (of length $4$)
and the $X_i$-$Y_i$ paths in $K_{(\alpha,\beta)}$ 
are depicted using red and blue edges, respectively,
for $1\le i,j\le 3$.
}
\label{fig:n=2}
\end{figure}

\subsection{The case $n\ge 3$}

To apply our strategy to the case $n\ge 3$, 
we need to solve the algebraic equation $B_{n,\kappa}(y)=0$ of degree $2n$ in $\Fp$, which is generally difficult.
For $n=3$, 
we provide a table of $T^{\text{dist}}_{3,\kappa}(p)$ below.
Each pair $(\alpha,\beta)$ in this table indeed yields a $K_{3,3}$-subdivision $K_{(\alpha,\beta)}\in\mathcal{K}^{\text{prop}}_{3,\kappa}(p)$ in $\Gkp$ by Proposition~\ref{porp:Kd=Kdd for n=1,2,3} (3),
provided the specified conditions are satisfied.
See Figure~\ref{fig:n=3} for explicit depictions of 
the $X_i$-$Y_i$ paths in $K_{(\alpha,\beta)}$ for $1\le i,j\le 3$
for certain $\kappa$ and $p$.
 
{\small 
\begin{table}[!ht]
\label{table:T3}
\begin{center}
\begin{tabular}{|c||c|c|c|c|}
\hline
$p$ & $T^{\text{dist}}_{3,0}(p)$ & $T^{\text{dist}}_{3,1}(p)$ & $T^{\text{dist}}_{3,2}(p)$ & $T^{\text{dist}}_{3,3}(p)$ \\
\hline
\hline
$5$ & $\emptyset$ & $\emptyset$ & $\emptyset$ & $\emptyset$ \\
\hline
$7$ & $\emptyset$ & $\{(0, 2), (4, 2)\}$ & $\emptyset$ & $\emptyset$ \\
\hline
$11$ & $\emptyset$ & $\emptyset$ & $\emptyset$ & $\emptyset$ \\
\hline
$13$ & 
\begin{tabular}{c}
$\{(1, 8), (11, 8),$ \\
\ \ \ \ $(3, 10), (6, 10)\}$
\end{tabular} 
& $\emptyset$ & $\{(5, 8), (7, 8)\}$ & $\emptyset$ \\
\hline
$17$ & $\emptyset$ & $\emptyset$ & $\emptyset$ & $\emptyset$ \\
\hline
$19$ & $\emptyset$ & $\emptyset$ & $\emptyset$ & $\emptyset$ \\
\hline
$23$ & $\emptyset$ & $\emptyset$ & $\emptyset$ & $\emptyset$ \\
\hline
$29$ & $\emptyset$ & $\{(14, 3), (24, 3)\}$ & 
\begin{tabular}{c}
$\{(8, 7), (12, 7),$ \\
\ \ \ $(10, 18), (24, 18)\}$
\end{tabular} 
 & $\{(21, 7), (28, 7)\}$ \\
\hline
$31$ & $\emptyset$ & $\emptyset$ & $\emptyset$ & $\emptyset$ \\
\hline
$37$ & $\emptyset$ & $\emptyset$ & $\emptyset$ & $\emptyset$ \\
\hline
$41$ & 
\begin{tabular}{c}
$\{(12, 30), (27, 30),$ \\
\ \ \ $(22, 37), (35, 37)\}$
\end{tabular} 
 & $\{(27, 37), (30, 37)\}$ & $\emptyset$ & 
 \begin{tabular}{c}
$\{(6, 30), (33, 30),$ \\
\ \ \ $(36, 14), (37, 14)\}$
\end{tabular} 
 \\
\hline
$43$ & 
\begin{tabular}{c}
$\{(23, 19), (37, 19),$ \\
\ \ \ $(26, 8), (38, 8)\}$
\end{tabular} 
 & $\{(12, 15), (41, 15)\}$ & $\{(23, 8), (41, 8)\}$ & $\{(27, 19), (33, 19)\}$ \\
\hline
$47$ & $\emptyset$ & $\emptyset$ & $\emptyset$ & $\emptyset$ \\
\hline
\end{tabular}
\caption{The sets $T^{\text{dist}}_{3,\kappa}(p)$ for $\kappa=0,1,2,3$.}
\end{center}
\end{table}
}

\begin{figure}[H]
\begin{center}
\begin{tabular}{cc}
\begin{minipage}{0.45\linewidth}
\centering
\includegraphics[width=0.8\linewidth]{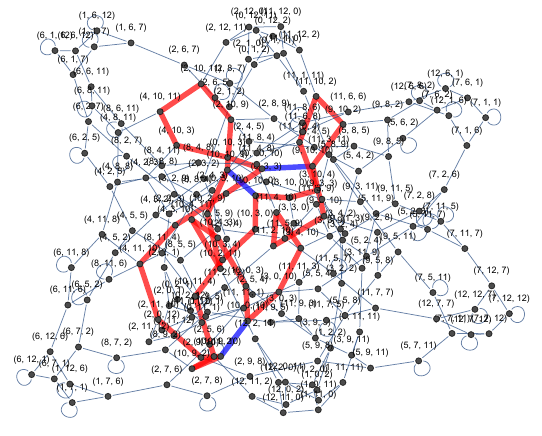}
\end{minipage}
\quad & \quad 
\begin{minipage}{0.45\linewidth}
\centering
\includegraphics[width=0.8\linewidth]{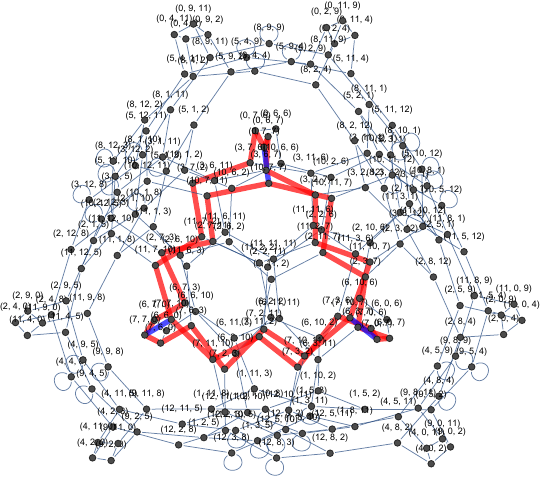}
\end{minipage}
\end{tabular}
\end{center}
\caption{Illustrations of $K_{(\alpha,\beta)}\in\mathcal{K}^{\text{prop}}_{3,\kappa}(p)$ 
for $(\kappa,p)=(5,13)$ with $(\alpha,\beta)=(0,10)$ (left),
and $(\kappa,p)=(7,13)$ with $(\alpha,\beta)=(0,7)$ (right).
The $X_i$-$Y_j$-paths (of length $6$)
and the $X_i$-$Y_i$ paths in $K_{(\alpha,\beta)}$ 
are depicted using red and blue edges, respectively, 
for $1\le i,j\le 3$.}
\label{fig:n=3}
\end{figure}



\subsection{The case $n=(p-1)/2$}

We finally consider the case $n=(p-1)/2$
by employing an approach different from the one used in the previous sections.
This yields the following result, 
which generalizes \cite[Theorem~1.4]{C2024}.

\begin{theorem}
\label{thm-n=(p-1)/2}
Let $\kappa\in\mathbb{Z}\setminus\{4\}$.
Then, there exists an explicit $K_{3,3}$-subdivision in $\Gkp$ 
if $\left(\frac{\kappa-4}{p}\right)=1$.
\end{theorem}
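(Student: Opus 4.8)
The plan is to exploit the fact that for $n=(p-1)/2$ one has $2n+1=p\equiv 0\pmod p$, which is precisely the regime excluded from the earlier propositions and which makes $A_n(x/2)$ degenerate. Since $A_m(1)=2m+1$ by Lemma~\ref{lem:Am properties}~(3), the value $\beta=2$ satisfies $A_{(p-1)/2}(\beta/2)=A_{(p-1)/2}(1)=p\equiv 0$ in $\Fp$, so $\beta=2$ is a root of $A_{(p-1)/2}(x/2)$. It then remains to solve the companion equation $f_{\kappa}(2,\alpha)=\alpha^2-4\alpha+8-\kappa=0$, whose discriminant is $4(\kappa-4)$; a root $\alpha\in\Fp$ exists exactly when $\left(\frac{\kappa-4}{p}\right)=1$, in which case I take $\alpha=2+\sqrt{\kappa-4}$ (so that $\overline{\alpha}=\beta^2-\alpha=2-\sqrt{\kappa-4}$). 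Thus $(\alpha,2)\in\Tnkpall$, and the candidate subdivision is $K_{(\alpha,2)}$.

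Next I would confirm $K_{(\alpha,2)}\in\Knkpdist$. The hypothesis $\left(\frac{\kappa-4}{p}\right)=1$ already forces $\kappa\not\equiv 4\pmod p$, and with $\beta=2$ one has $\beta^2(3-\beta)=4\ne\kappa$ and $\beta^2(8-\beta^2)=16\ne 4\kappa$, so $(\alpha,2)\in\Tnkpdist$. Since $\mathcal{E}_{n,\kappa}(p)=\emptyset$ for $\kappa\ne 4$, Theorem~\ref{thm:solutions} yields $K_{(\alpha,2)}\in\Knkpdist$. (Equivalently, one checks directly that $K_{(\alpha,2)}$ solves \eqref{for:explicit minors equation}, which reduces to $G'_n(2)=I_2$: at $\beta/2=1$ Lemma~\ref{lem:T_m and U_m properties}~(3) gives $(U_p(1),U_{p-1}(1),U_{p-2}(1))=(p+1,p,p-1)\equiv(1,0,-1)$.)

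The main obstacle is properness, since Proposition~\ref{porp:Kd=Kdd for n=1,2,3} is unavailable for $n=(p-1)/2$. The crucial simplification is that $R(\beta)=R(2)$ is \emph{unipotent} (double eigenvalue $1$): using $U_k(1)=k+1$ one computes
\[
R(2)^m=\begin{pmatrix} 2m+1 & -2m \\ 2m & 1-2m\end{pmatrix},
\]
so the path dynamics becomes additive in $m$ rather than requiring root-finding. By the symmetry reduction it suffices to show that neither $X_1$ nor $X_2$ is an internal vertex of the $X_1$-$Y_2$-path. Applying Lemma~\ref{lem:F G} with $x_k=\beta=2$, the even internal vertex $(R_1R_2)^m(X_1)$ equals $\bigl((2m+1)\alpha-4m,\,2m\alpha-4m+2,\,2\bigr)$ and the odd internal vertex $R_2(R_1R_2)^m(X_1)$ equals $\bigl((2m+1)\alpha-4m,\,(2m+2)\alpha-4m-2,\,2\bigr)$. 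Setting either equal to $X_1=(\alpha,2,2)$ or $X_2=(2,\alpha,2)$ and using $\alpha\ne 2$ forces the vanishing of $\alpha-2$ times one of $2m$, $2m+2$, or $2m+1$, i.e.\ $m\equiv 0$, $m\equiv -1$, or $m\equiv n\pmod p$; the odd-vertex coincidence with $X_1$ in fact requires the inconsistent pair $m\equiv 0$ and $m\equiv -1$. In every case the only solutions are the path endpoints $m=0$ (namely $X_1$) and $m=n$ (namely $Y_2$), so no internal vertex equals $X_1$ or $X_2$. Hence $K_{(\alpha,2)}$ is proper, i.e.\ $K_{(\alpha,2)}\in\Knkpprop$.

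Finally, Lemma~\ref{lem:properties of Kndd} upgrades properness to the conclusion that $K_{(\alpha,2)}$ is a genuine $K_{3,3}$-subdivision in $\Gkp$, which proves the theorem; non-planarity and the short cycles then follow from Corollaries~\ref{cor:sufficient condition non-planarity} and~\ref{cor:cycles}. I expect the only delicate points to be the bookkeeping of the internal-index ranges and confirming that the symmetry reduction to the single $X_1$-$Y_2$-path remains valid in this degenerate ($2n+1\equiv 0$) regime, both of which are routine given the explicit unipotent formula above.
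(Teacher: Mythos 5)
Your proposal is correct and follows essentially the same route as the paper: the same choice $(\alpha,\beta)=(2+\sqrt{\kappa-4},\,2)$ with $\beta=2$ a root of $A_{(p-1)/2}(x/2)$ via $A_m(1)=2m+1$, the same distinctness check, and the same properness argument by computing the $X_1$-$Y_2$-path vertices explicitly (the paper writes them as $2+(2m+1)\sqrt{\kappa-4}$ etc.\ by induction, which is exactly your unipotent formula for $R(2)^m$) and ruling out coincidences because $m$ and $2m+1$ stay below $p=2n+1$. The only blemishes are cosmetic: $\mathcal{E}_{n,0}(p)$ is not actually empty (irrelevant here, since you only need the $\Knkpdist$ description), and the $m\equiv n$ solution of the odd-vertex coincidence with $X_2$ corresponds to the vertex one step past $Y_2$ rather than $Y_2$ itself, which does not affect the conclusion.
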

\begin{proof}
When $n=(p-1)/2$, which implies $2n+1\equiv 0 \pmod{p}$,
Lemma~\ref{lem:Am properties} (3) ensures that 
$\beta=2$ is a solution of $A_n(x/2)=0$.  
Under our assumption,
we can take $(\alpha,\beta)=(2+\sqrt{\kappa-4},2)\in\Tnkpdist$
and hence 
\[
K_{(\alpha,\beta)}=
\left(\begin{array}{ccc}
(2+\sqrt{\kappa-4},2,2),\!\! & (2,2+\sqrt{\kappa-4},2),\!\! & (2,2,2+\sqrt{\kappa-4}) \\
(2-\sqrt{\kappa-4},2,2),\!\! & (2,2-\sqrt{\kappa-4},2),\!\! & (2,2,2-\sqrt{\kappa-4})
\end{array}\right)
\in \Knkpdist.
\]
Now, we prove that $K_{(\alpha,\beta)}\in\Knkpprop$.
As discussed in Section~\ref{sec:K33 minors},
it suffices to show that 
$X_1=(2+\sqrt{\kappa-4},2,2)$ and  
$X_2=(2,2+\sqrt{\kappa-4},2)$ do not appear as internal vertices 
of the $X_1$-$Y_2$-path.
By induction on $m$, we have   
\begin{align*}
 (R_1R_2)^m(X_1)
&=(2+(2m+1)\sqrt{\kappa-4},2+2m\sqrt{\kappa-4},2),\\
 (R_2(R_1R_2)^m)(X_1)
&=(2+(2m+1)\sqrt{\kappa-4},2+2(m+1)\sqrt{\kappa-4},2).
\end{align*}
From these explicit forms,  
we see that $(R_1R_2)^m(X_1)=X_1$ holds if and only if $p\,|\,m$, 
and $(R_2(R_1R_2)^m)(X_1)=X_1$ never occur.
Similarly, $(R_1R_2)^m(X_1)=X_2$ never occurs, 
whereas $(R_2(R_1R_2)^m)(X_1)=X_2$ holds
if and only if $p\,|\,(2m+1)$.
Since neither $p\,|\,m$ for $0<m<n$ nor 
$p\,|\,(2m+1)$ for $0\le m < n$ holds
as both $m$ and $2m+1$ are strictly less than $2n+1 = p$, 
the desired claim follows.
\end{proof}

\begin{figure}[H]
\begin{center}
\begin{tabular}{cc}
\begin{minipage}{0.45\linewidth}
\centering
\includegraphics[width=0.8\linewidth]{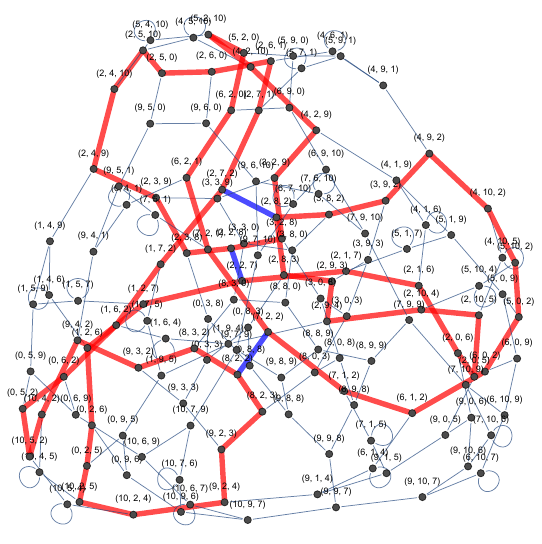}
\end{minipage}
\quad & \quad 
\begin{minipage}{0.45\linewidth}
\centering
\includegraphics[width=0.8\linewidth]{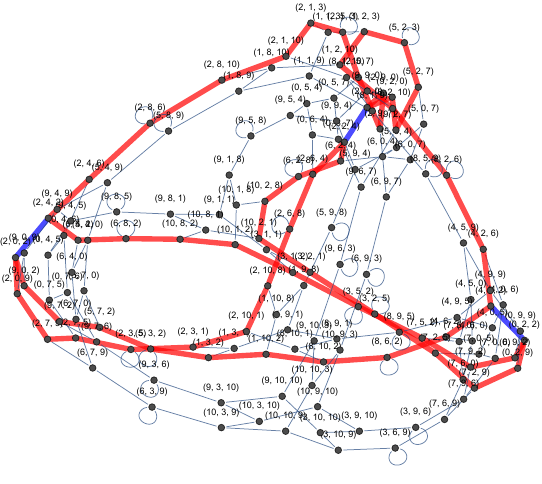}
\end{minipage}
\end{tabular}
\end{center}
\caption{Illustrations of $K_{(\alpha,2)}\in\mathcal{K}^{\mathrm{prop}}_{(p-1)/2,\kappa}(p)$ 
for $(\kappa,p)=(7,11)$ (left), and $(\kappa,p)=(8,11)$ (right)
with $\alpha=2+\sqrt{\kappa-4}$.
The $X_i$-$Y_j$-paths (of length $p-1$)
and the $X_i$-$Y_i$ paths in $K_{(\alpha,2)}$ 
are depicted using red and blue edges, respectively,
for $1\le i,j\le 3$.}
\label{fig:n=p-1 div 2}
\end{figure}

\subsection{Natural densities}
\label{subsec:density}
Let $\kappa\in\mathbb{Z}\setminus\{4\}$.
From Theorem~\ref{thm-n=1}, Theorem~\ref{thm:n=2 non-planarity} (B),
or Theorem~\ref{thm-n=(p-1)/2}, 
it follows from Dirichlet's theorem on arithmetic progressions that there are infinitely many primes $p$ for which
$\Gkp$ contains a $K_{3,3}$-subdivision, which yields topological properties of $\Gkp$ such as the non-planarity and existence of short cycles by Corollaries~\ref{cor:sufficient condition non-planarity} and \ref{cor:cycles}, respectively.
This motivates us to consider the natural density of such primes $p$: 
\[
\delta_{\kappa}
\coloneq\lim_{x\to\infty}
\frac{\#\{p\le x\,|\,\text{$\Gkp$ contains a $K_{3,3}$-subdivision}\}}{\#\{p\le x\}}.
\]
It holds that $\delta_0=1$
since $\mathcal{G}(p)$ is non-planar for any $p\neq 7$
by \cite[p.115]{C2024}.
For any $\kappa \neq 4$, Theorem~\ref{thm-n=1} and the Chebotarev density theorem (see, e.g.~\cite{N1999}) 
imply that $ \delta_{\kappa}\geq 1/2$.
Moreover, combining the three theorems above,
we arrive at the following theorem.

\begin{theorem}
\label{thm:np density}
\begin{itemize}
\item[$(1)$]
For each $\kappa\in\mathbb{Z}\setminus\{4\}$ such that none of  
$\eta_{1,\kappa}$,  
$\eta_{2,\kappa}$, 
$5$, and $\kappa-4$
is a square in $\mathbb{Z}$, 
and that the ratio of any two of them is not a square in $\mathbb{Q}$, 
we have
\begin{equation}
\label{for:lower bound of Dnp}
\delta_{\kappa}\ge \frac{13}{16}.
\end{equation}
That is, $\Gkp$ contains a $K_{3,3}$-subdivision
for at least $81.25\%$ 
of primes $p$ asymptotically.
\item[$(2)$]
The natural density 
of $\kappa\in\mathbb{Z}\setminus\{4\}$ for which $(1)$ holds 
is equal to $1$.
\end{itemize}
\end{theorem}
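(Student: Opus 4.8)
The plan is to realize the primes covered by Theorems~\ref{thm-n=1}, \ref{thm:n=2 non-planarity}, and \ref{thm-n=(p-1)/2} as a union of Legendre-symbol conditions and to evaluate the density of that union by inclusion--exclusion. Writing $\chi_1=\left(\frac{\eta_{1,\kappa}}{p}\right)$, $\chi_2=\left(\frac{\eta_{2,\kappa}}{p}\right)$, $\chi_5=\left(\frac{5}{p}\right)$, and $\chi_4=\left(\frac{\kappa-4}{p}\right)$, the three theorems guarantee a $K_{3,3}$-subdivision in $\Gkp$ whenever $p$ lies in $A_1\cup A_2\cup A_3$, where $A_1=\{\chi_1=1\}$, $A_2=\{\chi_2=-1,\ \chi_5=1\}$ (condition (B) of Theorem~\ref{thm:n=2 non-planarity}), and $A_3=\{\chi_4=1\}$, apart from the finitely many $p$ dividing the relevant nonzero resultants. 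Thus $\delta_\kappa\ge \delta(A_1\cup A_2\cup A_3)$, and the task reduces to showing the right-hand side equals $13/16$.

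First I would establish that $\chi_1,\chi_2,\chi_5,\chi_4$ are \emph{jointly} equidistributed over $\{\pm1\}^4$, i.e.\ each of the sixteen sign patterns has natural density $1/16$. The key input is that $\eta_{1,\kappa}$, $\eta_{2,\kappa}$, $5$, and $\kappa-4$ are multiplicatively independent modulo squares, so that their classes are linearly independent in $\mathbb{Q}^{\times}/(\mathbb{Q}^{\times})^2$ and the compositum $\mathbb{Q}(\sqrt{\eta_{1,\kappa}},\sqrt{\eta_{2,\kappa}},\sqrt5,\sqrt{\kappa-4})$ is a $(\mathbb{Z}/2\mathbb{Z})^4$-extension of degree $16$. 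The Chebotarev density theorem, applied to this abelian extension (equivalently, reducing $\chi_5,\chi_4$ to congruence conditions via quadratic reciprocity), then yields the equidistribution. With this in hand, inclusion--exclusion gives
\[
\delta(A_1\cup A_2\cup A_3)
=\tfrac12+\tfrac14+\tfrac12-\tfrac18-\tfrac14-\tfrac18+\tfrac{1}{16}
=\tfrac{13}{16},
\]
proving (1); equivalently, the complementary ``bad'' event $\{\chi_1=-1,\ \chi_4=-1,\ (\chi_2=1\text{ or }\chi_5=-1)\}$ has density $3/16$.

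For (2) I would show that the set of $\kappa$ \emph{failing} the independence hypothesis has natural density zero. Each failure amounts to $P(\kappa)$ being a perfect square for one of the finitely many nonconstant polynomials $P$ arising as a subset product of $\{\eta_{1,\kappa},\eta_{2,\kappa},5,\kappa-4\}$ (e.g.\ $4\kappa-7$, $16\kappa^2-68\kappa+41$, $(4\kappa-7)(\kappa-4)$, $(4\kappa-7)(16\kappa^2-68\kappa+41)$, and so on up to the full quartic product). None of these is a perfect square in $\mathbb{Z}[\kappa]$: for the degree-one and degree-two members the equation $P(\kappa)=\square$ has $O(\sqrt{X})$ solutions $\kappa\le X$, while for the members of degree $3$ and $4$ the curve $y^2=P(\kappa)$ has genus $\ge1$ and hence, by Siegel's theorem on integral points, only finitely many integer solutions. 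A finite union of density-zero sets is density zero, so the $\kappa$ satisfying the hypothesis of (1) form a set of density $1$.

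The main obstacle is the joint independence in the second paragraph: the conditions ``no single element and no ratio of two is a square'' are not by themselves sufficient for the degree-$16$ compositum, since a three- or four-fold product could still be a square (and one checks that, e.g., the relation $\eta_{1,\kappa}\eta_{2,\kappa}(\kappa-4)=\square$ would make the inclusion--exclusion count drop to $12/16$). Hence the crux is to rule out the triple and quadruple products as well; fortunately the same density-zero analysis used for (2) excludes these extra relations for all but a density-zero set of $\kappa$, so (1) and (2) remain compatible. A secondary technical point is invoking Siegel's theorem correctly, which requires verifying that the cubic and quartic subset products are squarefree, so that the curves $y^2=P(\kappa)$ indeed have positive genus; this is routine since $16\kappa^2-68\kappa+41$ is irreducible and shares no root with $4\kappa-7$ or $\kappa-4$.
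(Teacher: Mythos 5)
Your proposal follows essentially the same route as the paper: both arguments express the covered primes as the union $A_1\cup A_2\cup A_3$ of the Legendre-symbol events coming from Theorems~\ref{thm-n=1}, \ref{thm:n=2 non-planarity}~(B) and \ref{thm-n=(p-1)/2}, evaluate the union by inclusion--exclusion using Chebotarev equidistribution over the multiquadratic field generated by $\sqrt{\eta_{1,\kappa}},\sqrt{\eta_{2,\kappa}},\sqrt5,\sqrt{\kappa-4}$, and for part (2) bound the number of exceptional $\kappa\le x$ for each potential quadratic relation. The inclusion--exclusion arithmetic matches the paper's term for term. Where you genuinely add value is in the second paragraph: the paper's formula \eqref{for:Chebotarev} asserts $Q(a_1,\ldots,a_r;\varepsilon_1,\ldots,\varepsilon_r)=2^{-r}$ under only the hypotheses that no $a_i$ and no ratio $a_i/a_j$ is a square, but (as the example $2,3,6$ shows) these pairwise conditions do not force $[\mathbb{Q}(\sqrt{a_1},\ldots,\sqrt{a_r}):\mathbb{Q}]=2^r$; a square triple or quadruple product would break the count, and your computation that $\eta_{1,\kappa}\eta_{2,\kappa}(\kappa-4)=\square$ would drop the bound to $12/16$ is correct. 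So you have identified a genuine (if minor) gap in the paper's stated hypotheses for part (1), and your patch --- ruling out the higher subset products for all but a density-zero set of $\kappa$, so that the combined statement (1)+(2) survives --- is the right fix. The other difference is in part (2): the paper handles only the pairwise products and does so by elementary means (gcd arguments and Pell-type equations, giving $O(\sqrt{x})$, $O(\log x)$ or $O(1)$ bounds), whereas you invoke Siegel's theorem on integral points of the genus-$\ge 1$ curves $y^2=P(\kappa)$ for the cubic and quartic subset products; this is less elementary but more uniform, and it is exactly what is needed to cover the triple and quadruple relations that your corrected version of (1) requires. One small caveat: as you note, you must check that each such $P$ is squarefree, and you should also record that for the degree-one and degree-two subset products the non-square claim is needed generically (e.g.\ $\eta_{2,\kappa}$ is a square only for $\kappa=10$, as the paper observes via the factorization $(8\kappa-17+2m)(8\kappa-17-2m)=125$).
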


\begin{remark}
If either $\eta_{1,\kappa}$ or $\kappa-4$ is a square in $\mathbb{Z}$,
then $\Gkp$ is non-planar for all but finitely many primes $p$. 
Consequently, $\delta_{\kappa}=1$ for such $\kappa\in\mathbb{Z}\setminus\{4\}$.
Note that, one can check that $\eta_{1,\kappa}$ is a square if $\kappa=t^2+t+2$ for some ineteger $t$.
On the other hand, if $\eta_{2,\kappa}$ is a square in $\mathbb{Z}$, 
which occurs if and only if $\kappa=10$,
then, since the conditions in
Theorem~\ref{thm:n=2 non-planarity} (A) 
reduces to $(\frac{5}{p})=1$,
the Chebotarev density theorem implies that
$\delta_{\kappa}\ge 1/2$.

\end{remark}

\begin{proof}[Proof of Theorem~\ref{thm:np density}]
We first prove $(1)$.
For $a_1,\ldots,a_r\in\mathbb{Z}$
and $\varepsilon_1,\ldots,\varepsilon_r\in\{\pm 1\}$, define 
\[
Q(a_1,\ldots,a_r;\varepsilon_1,\ldots,\varepsilon_r)
\coloneq 
\lim_{x\to\infty}\frac{
\#\{p\le x\,|\,(\frac{a_1}{p})=\varepsilon_1,\ldots,(\frac{a_r}{p})=\varepsilon_r\}}{\#\{p\le x\}}.
\]
By the Chebotarev density theorem and the prime number theorem, 
if none of $a_1,\dots,a_r$ is a square in $\mathbb{Z}$ and 
the ratio of any two of them is also not a square in $\mathbb{Q}$, 
then
\begin{equation}
\label{for:Chebotarev}
Q(a_1,\ldots,a_r;\varepsilon_1,\ldots,\varepsilon_r)
=\frac{1}{2^r}.
\end{equation}
Notice that $2^r$ is the order of the Galois group of the multiquadratic field $\mathbb{Q}(\sqrt{a_1},\ldots,\sqrt{a_r})$ over $\mathbb{Q}$ 
under our assumptions.
Then, by Theorem~\ref{thm-n=1}, Theorem~\ref{thm:n=2 non-planarity} (B), and Theorem~\ref{thm-n=(p-1)/2}, 
and applying the inclusion-exclusion principle, 
we have 
\begin{align*}
\delta_{\kappa}
&\ge Q(\eta_{1,\kappa};1)+Q(\eta_{2,\kappa},5;-1,1)+Q(\kappa-4;1)\\
&\quad -Q(\eta_{1,\kappa},\eta_{2,\kappa},5;1,-1,1)
-Q(\eta_{2,\kappa},5,\kappa-4;-1,1,1)-Q(\kappa-4,\eta_{1,\kappa};1,1)\\
&\quad +Q(\eta_{1,\kappa},\eta_{2,\kappa},5,\kappa-4;1,-1,1,1)\\
&=\frac{13}{16}.
\end{align*}

We next show $(2)$. 
For $\kappa\in\mathbb{Z}\setminus\{4\}$, put  
$b_1(\kappa)\coloneq 5$, 
$b_2(\kappa)\coloneq \kappa-4$, 
$b_3(\kappa)\coloneq \eta_{1,\kappa}=4\kappa-7$, and $b_4(\kappa)\coloneq\eta_{2,\kappa}=16\kappa^2-68\kappa+41$.
Moreover, define 
\begin{align*}
u_i(x)
&\coloneq \#\{\kappa\in\mathbb{Z}\setminus\{4\}\,|\,\text{$|\kappa|\le x$, $b_i(\kappa)$ is a square in $\mathbb{Z}$}\}
\quad (1\le i\le 4),\\
v_{ij}(x)
&\coloneq \#\{\kappa\in\mathbb{Z}\setminus\{4\}\,|\,\text{$|\kappa|\le x$, $b_j(\kappa)/b_i(\kappa)$ is a square in $\mathbb{Q}$}\}
\quad (1\le i<j\le 4).
\end{align*}
To obtain the desired result,
it suffices to show that
$u_i(x)=o(x)$ for $1\le i\le 4$,
and $v_{ij}(x)=o(x)$ for $1\le i<j\le 4$ as $x\to \infty$.

It is easy to see that $u_1(x)=0$ and $u_2(x)=u_3(x)=O(\sqrt{x})$.
For $u_4(x)$, we observe that  
if $b_4(\kappa)=m^2$ for some $m\in\mathbb{Z}$, 
then $(8\kappa-17+2m)(8\kappa-17-2m)=125$,
which implies that $\kappa$ must be equal to $4$ or $10$.
This shows that $u_4(x)=O(1)$.

For $v_{12}(x)$, observe that 
if $b_2(\kappa)/b_1(\kappa)$ is a square in $\mathbb{Q}$,
then it must also be a square in $\mathbb{Z}$.
Hence, by the same discussion as above, 
we have $v_{12}(x)=O(\sqrt{x})$.
Similarly, $v_{13}(x)=O(\sqrt{x})$.
For $v_{14}(x)$, suppose that $b_4(\kappa)/b_1(\kappa)$ is a square in $\mathbb{Q}$.
Then it is also be a square in $\mathbb{Z}$,
and we can write $b_4(\kappa)/b_1(\kappa)=m^2$ for some $m\in\mathbb{Z}$, or equivalently, 
$16\kappa^2-68\kappa+41-5m^2=0$.
The discriminant of this quadratic equation in $\kappa$, divided by $16$, is $20m^2+125$, 
which must be a square in $\mathbb{Z}$.
This means that $m$ satisfies the Pell-type equation $s^2-20m^2=125$,
and by the standard theory of such equations, 
the number of solutions $(s,m)$ with $|m| \le x$ grows logarithmically; thus, $v_{14}(x)=O(\log{x})$.
For $v_{23}(x)$, assume that  
there exist $a,b\in\mathbb{Z}$ with $\gcd(a,b)=1$
such that $b_3(\kappa)/b_2(\kappa)=a^2/b^2$, i.e.
$\kappa=(7b^2-4a^2)/(4b^2-a^2)$.
Put $s= 4b^2-a^2=(2b+a)(2b-a)$.
Then we see that $s\,|\,9a^2$ and $s\,|\,9b^2$, 
hence $s\,|\,\gcd(9a^2,9b^2)=9$, and thus $s\in\{\pm 1,\pm 3,\pm 9\}$.
This restricts $(a, b)$ to the finite set 
$(a,b)\in\{(\pm 1,\pm 1),(\pm 5,\pm 2)\}$, 
meaning $\kappa$ must be $1$ or $8$. 
Hence, $v_{23}(x)=O(1)$.
For $v_{24}(x)$, let $g=\gcd(b_4(\kappa),b_2(\kappa))$.
A simple calculation shows $g\in\{1,5,25\}$.
Assume that $b_4(\kappa)/b_2(\kappa)$ is a square in $\mathbb{Q}$.
Then, there exist $a,b\in\mathbb{Z}$ with $\gcd(a,b)=1$ such that $b_4(\kappa)=ga^2$ and $b_2(\kappa)=gb^2$.
Notice that, from these equations, 
for given $a$ and $g$, 
there are only finitely many possible $b$.
Now, from the former equation, 
the discriminant of the resulting quadratic equation in $\kappa$, 
divided by $16$, is equal to $4ga^2+125$ and must be a square in $\mathbb{Z}$.
Therefore, $a$ satisfies the Pell-type equation $s^2-4ga^2=125$.
Hence, by applying again the standard theory of this type of equation, together with the above observation, we obtain $v_{24}(x)=O(\log{x})$.
Finally, by the same argument, we have $v_{34}(x)=O(\log{x})$.

These complete the proof.
\end{proof}

The non-negative integers $\kappa$ that satisfy the conditions in Theorem~\ref{thm:np density} (1) are listed as follows:
\[
0, 6, 7, 11, 12, 15, 16, 17, 18, 19, 
21, 23, 25, 26, 27, 28, 30, 31, 34, 35, 36, 37, 38, 39, \ldots.
\]

\begin{remark}
We might improve the lower bound \eqref{for:lower bound of Dnp}
by taking into account primes that satisfy the conditions
in Theorem~\ref{thm:n=2 non-planarity} (A).
Indeed, since these conditions are equivalent to the quartic equation $B_{2,\kappa}(y)=0$ having four solutions in $\mathbb{F}_p$, for any $\kappa\in\mathbb{Z}\setminus\{4\}$, the Chebotarev density theorem implies that the corresponding natural density is bounded below by $1/24$, 
where $24$ is the order of the symmetric group of degree $4$,
the maximal possible Galois group of $B_{2,\kappa}(y)$.
However, unlike the case (B), 
it is difficult to control the overlap between the set of primes satisfying the conditions in Theorem~\ref{thm:n=2 non-planarity} (A) 
and those appearing in Theorem~\ref{thm-n=1} or Theorem~\ref{thm-n=(p-1)/2}.
Due to this potential intersection, 
we cannot currently provide an improved explicit lower bound 
for $\delta_{\kappa}$.

\end{remark}

\begin{remark}
For general $\kappa \in \mathbb{Z}$, 
it would be possible to prove that $\delta_\kappa=1$
if one could evaluate the asymptotic behavior 
of the Euler characteristic of $\Gkp$ similar to the case of $\kappa=0$.
\end{remark}

\section{Mutually vertex-disjoint $K_{3,3}$-subdivisions}
\label{sect:disjoint}


In this section, 
we construct mutually vertex-disjoint $K_{3,3}$-subdivisions in $\Gkp$. 

\begin{theorem}
\label{thm-disjoint-k}
\begin{enumerate}
\item[$(1)$] 
For each $\kappa\in\mathbb{Z}\setminus\{4\}$,
$\Gkp$ contains at least four mutually vertex-disjoint 
$K_{3,3}$-subdivisions for infinitely many primes $p$ of natural density at least $1/4$ if $\kappa=2$ and $1/2$ otherwise. 
Furthermore, if $\kappa$ is an integer such that 
none of $\eta_{1, \kappa}$, $\eta_{2,\kappa}$ and $5$ is a square in $\mathbb{Z}$ and that the ratio of any two of them is also not a square in $\mathbb{Q}$,
then the natural density of such primes 
is at least $5/8$.
\item[$(2)$]
The natural density of $\kappa\in\mathbb{Z}\setminus\{4\}$
for which the latter assertion of $(1)$ holds is equal to $1$.
\end{enumerate}
\end{theorem}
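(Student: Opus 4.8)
The plan is to realize the four subdivisions as a single orbit under the group $\Sigma\cong(\mathbb{Z}/2\mathbb{Z})^2$ of double sign changes. Since $\Sigma$ acts on $\Gkp$ by graph automorphisms (Lemma~\ref{lem-double-sign-k}), for any $K\in\Knkpprop$ each image $\sigma(K)$ is again a $K_{3,3}$-subdivision, and the family $\{\sigma(K)\}_{\sigma\in\Sigma}$ has exactly four members. Because $\Sigma$ is a group, these four are mutually vertex-disjoint precisely when $\tau(V(K))\cap V(K)=\emptyset$ for each of the three nontrivial $\tau\in\Sigma$. I would reduce this to an elementary condition on coordinate values: letting $S\subset\mathbb{F}_p$ be the set of all entries appearing in the triples of $V(K)$, every nontrivial $\tau$ negates exactly two coordinates, so if $S\cap(-S)=\emptyset$ then $\tau(v)$ has an entry outside $S$ for every $v\in V(K)$, whence $\tau(v)\notin V(K)$ and disjointness follows.

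The next step is to check $S\cap(-S)=\emptyset$ for the explicit constructions. For $n=1$ one has $S=\{\alpha_\kappa,\overline{\alpha_\kappa},-1\}$, where $\alpha_\kappa+\overline{\alpha_\kappa}=1$ and $\alpha_\kappa\overline{\alpha_\kappa}=2-\kappa$ (the roots of $B_{1,\kappa}$). A short case analysis shows that each possible coincidence $s+s'=0$ with $s,s'\in S$ reduces to $B_{1,\kappa}(1)=0$ or $B_{1,\kappa}(0)=0$, i.e. to $\kappa\equiv 2\pmod p$, while the remaining case $-\alpha_\kappa=\overline{\alpha_\kappa}$ forces $1=0$ and cannot occur. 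Hence for every integer $\kappa\neq 2$ the condition $S\cap(-S)=\emptyset$ holds for all but the finitely many primes dividing $\kappa-2$, so the $n=1$ block of Theorem~\ref{thm-n=1} yields four disjoint subdivisions whenever $\left(\frac{\eta_{1,\kappa}}{p}\right)=1$, a set of density $\tfrac12$ by Chebotarev; this gives the bound $1/2$ for $\kappa\neq 2$. For $\kappa=2$ the set $S=\{1,0,-1\}$ is stable under negation, so the $n=1$ orbit overlaps and a different block is required. I would switch to the $n=2$ subdivision of Theorem~\ref{thm:n=2 non-planarity}, whose coordinate set is $S=\{\alpha,\overline{\alpha},\beta,\beta(\alpha-1),\beta(\overline{\alpha}-1)\}$ subject to $\beta^2+\beta-1=0$ and $f_\kappa(\beta,\alpha)=0$. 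As in the properness analysis, each coincidence $s+s'=0$ translates into the vanishing of a resultant of $B_{2,\kappa}(y)$ with an auxiliary polynomial in $\kappa$, and a direct evaluation at $\kappa=2$ (where $B_{2,2}(0)=-4\neq0$, so $\alpha\neq0$) confirms $S\cap(-S)=\emptyset$. Invoking condition (B) of Theorem~\ref{thm:n=2 non-planarity}, whose primes have density $\tfrac14$, then settles the bound $1/4$ for $\kappa=2$.

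For the refined estimate I would combine the two blocks by inclusion–exclusion, exactly as in the proof of Theorem~\ref{thm:np density}. The squarefree hypotheses guarantee that $\eta_{1,\kappa},\eta_{2,\kappa},5$ are multiplicatively independent, so \eqref{for:Chebotarev} yields
\[
\delta^{\mathrm{disj}}_\kappa\ \ge\ Q(\eta_{1,\kappa};1)+Q(\eta_{2,\kappa},5;-1,1)-Q(\eta_{1,\kappa},\eta_{2,\kappa},5;1,-1,1)\ =\ \tfrac12+\tfrac14-\tfrac18\ =\ \tfrac58 .
\]
Part (2) is then identical to Theorem~\ref{thm:np density}(2): the integers $\kappa$ failing the hypotheses are those for which one of $\eta_{1,\kappa},\eta_{2,\kappa},5$ is a square, or a ratio of two of them is a square, and each such family was already shown to contain only $o(x)$ values up to $x$ via the square and Pell-type bounds, hence has density zero.

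The main obstacle I anticipate is the disjointness verification for $n=2$. Unlike the transparent $n=1$ case, here $S\cap(-S)=\emptyset$ is not automatic; it must be extracted from the resultant computations while simultaneously ruling out the degenerate configurations (such as $\alpha=0$, occurring when $\kappa^2-6\kappa+4\equiv0$) that would let a double sign change identify two internal path vertices. Because the $1/4$ bound for $\kappa=2$ rests \emph{entirely} on the $n=2$ block, the delicate point is to keep precise track of which finitely many $\kappa$ are exceptional and to confirm that $\kappa=2$ is not among them; the corresponding bookkeeping for the $5/8$ bound, where disjointness is needed for every squarefree-type $\kappa$, is the most computation-heavy part of the argument.
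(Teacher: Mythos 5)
Your overall architecture coincides with the paper's: take the explicit $K_{(\alpha,\beta)}\in\Knkpprop$ from the $n=1$ and $n=2$ constructions, use the $\Sigma$-orbit to produce four copies, prove pairwise vertex-disjointness for all but finitely many $p$, and then run exactly the same Chebotarev/inclusion--exclusion computation $\tfrac12+\tfrac14-\tfrac18=\tfrac58$ (and the same density-one argument in $\kappa$ for part (2)). Where you genuinely diverge is the disjointness step. You reduce it to the coordinate-set condition $S\cap(-S)=\emptyset$, which forces you to rule out \emph{every} pairwise coincidence $s+s'=0$ among all entries occurring in $V(K)$ --- for $n=2$ this includes the internal-vertex entries $\beta(\alpha-1)$ and $\beta(\overline{\alpha}-1)$, giving roughly fifteen conditions. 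The paper instead exploits the path structure: since $\sigma$ commutes with the Vieta involutions, a common vertex of $K$ and $\sigma(K)$ lying on an $X_i$-$Y_j$-path forces the two paths to agree until an \emph{endpoint} of one lies on the other, so only coincidences among $X_i,Y_j,\sigma(X_{i'}),\sigma(Y_{j'})$ need to be excluded; these involve only $\alpha,\overline{\alpha},\beta$ and are handled by the six nonvanishing conditions of Lemma~\ref{lem-endvertex} ($\beta\ne 0$, $\alpha,\overline{\alpha}\ne 0$, $\overline{\alpha}\ne-\alpha$, $\alpha,\overline{\alpha}\ne-\beta$). Your route is conceptually simpler (no path-following argument) but computationally heavier, and it proves a stronger statement than needed.

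The one place where your write-up falls short of a proof is the $n=2$ disjointness check, which you assert (``a direct evaluation at $\kappa=2$ \dots confirms'') rather than perform; since the entire $1/4$ bound for $\kappa=2$ and the $n=2$ contribution to the $5/8$ bound rest on it, this must actually be carried out. It does go through: each coincidence reduces, after eliminating $\beta$ via $\beta^2+\beta-1=0$ and $\alpha$ via $f_\kappa(\beta,\alpha)=0$, to the nonvanishing in $\F_p$ of an integer polynomial in $\kappa$ with no integer roots (for instance $\alpha+\beta(\alpha-1)=0$ forces $\kappa^2-6\kappa+4\equiv 0$, the conditions $\alpha=-\beta$ and $\alpha=1$ force $\kappa^2-5\kappa+5\equiv 0$, and $\beta(\alpha-1)=-\beta(\overline{\alpha}-1)$ forces $A_2(-1/2)\cdot 4=-4\equiv 0$), so for each fixed $\kappa$ only finitely many primes are lost and the density bounds are unaffected. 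With that verification supplied, your argument is correct and delivers the same conclusion as the paper.
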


\begin{remark}
\label{rem-k=2}
Let us consider the $K_{3,3}$-subdivision $K_{(\alpha_{\kappa},-1)} \in \mathcal{K}^{\mathrm{prop}}_{1,\kappa}(p)$ appearing in Section~\ref{subsec:n=1}.
When $\kappa=2$, for any prime $p$,
$\mathcal{G}_{2}(p)$ contains a specific component $\mathcal{S}_{2}(p)$ consisting of $16$ vertices, given by the orbit of $(1,1,1)\in\mathcal{M}_2(p)$ under the group generated by Vieta involutions 
(this component is unique according to \cite[Theorem~1.1]{Mart2025}).
We observe that $K_{(\alpha_{2},-1)}=K_{(1,-1)} \in \mathcal{K}^{\mathrm{prop}}_{1,2}(p)$ and its images $\sigma\bigl(K_{(\alpha_{2},-1)}\bigr)$ under any double sign change $\sigma$ cannot be mutually vertex-disjoint since these are contained within $\mathcal{S}_{2}(p)$ (see Figure~\ref{fig:k2smallcomponent}).
On the other hand, when $\kappa\neq 2$, $K_{(\alpha_{\kappa},-1)}$ and its copies by double sign changes are mutually vertex-disjoint as explained in the proof of Theorem~\ref{thm-disjoint-k}. 

\begin{figure}[H]
\begin{center}
\includegraphics[width=0.9\linewidth, height=0.2\textheight, keepaspectratio]{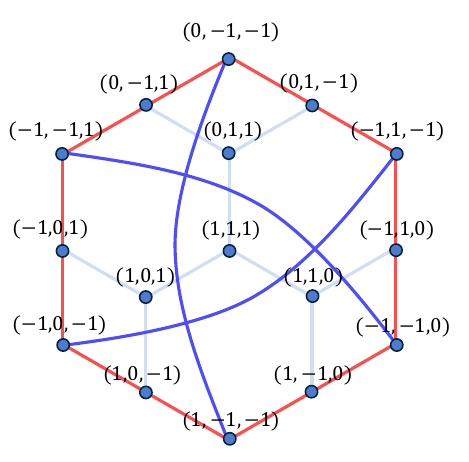}
\caption{The $K_{3,3}$-subdivision $K_{(1,-1)} \in \mathcal{K}^{\mathrm{prop}}_{1,2}(p)$ within the component $\mathcal{S}_2(p)$ of $\mathcal{G}_2(p)$}
\label{fig:k2smallcomponent}
\end{center}
\end{figure}
\end{remark}

To prove Theorem~\ref{thm-disjoint-k}, 
we prepare the following lemma.


\begin{lemma}
\label{lem-endvertex}
Let $\kappa\in\mathbb{Z}\setminus\{4\}$
and $n\in\mathbb{Z}_{>0}$ satisfying $2n+1\not\equiv 0 \pmod{p}$.
Assume that $\xi_{n,\kappa}\eta_{n,\kappa}\ne 0$. 
Suppose that $\Tnkpdist\ne\emptyset$,
and take any $(\alpha,\beta)\in \Tnkpdist$. 
\begin{itemize}
\item[$(1)$]
We have $\beta\ne 0$ and $\overline{\alpha}\ne -\alpha$.
\item[$(2)$]
Assume further that $B_{n,\kappa}(0)\ne 0$. 
Then $\alpha \neq 0$ and $\overline{\alpha} \neq 0$. 


\item[$(3)$]
Let
\[
\lambda_{n,\kappa}
\coloneq\Res\left(A_n\left(\frac{x}{2}\right),x^3+3x^2-\kappa\right)\in\mathbb{F}_p,
\]
and assume that $\lambda_{n,\kappa}\ne 0$.
Then $\alpha \neq -\beta$ and 
$\overline{\alpha} \neq -\beta$. 
Moreover, let  
$K_{(\alpha,\beta)}=\left(\begin{array}{ccc}
X_1,\!\! & X_2,\!\! & X_3 \\
Y_1,\!\! & Y_2,\!\! & Y_3
\end{array}\right)\in\mathcal{K}_{n, \kappa}^{\mathrm{dist}}(p)$.
Then for any double sign change $\sigma$
and $1\le i,j\le 3$, we have
\[
X_i, Y_j \notin \Bigl\{\sigma(X_1), \sigma(X_2), \sigma(X_3), 
\sigma(Y_1), \sigma(Y_2), \sigma(Y_3)
\Bigr\}.
\]
\end{itemize}
\end{lemma}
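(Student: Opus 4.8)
The plan is to handle the three parts in order, obtaining (2) and (3) from results already in hand and reducing the only substantive claim, the vertex-disjointness, to a short comparison of coordinate multisets. For part (1) I would first observe that $\beta\ne0$ is forced by the definition, since $\Tnkpdist\subset\Fp\times\Fpm$; as $\overline{\alpha}=\beta^2-\alpha$ we have $\alpha+\overline{\alpha}=\beta^2$, so $\overline{\alpha}=-\alpha$ would give $\beta^2=0$, and hence $\overline{\alpha}\ne-\alpha$. For part (2), the hypothesis $\xi_{n,\kappa}\eta_{n,\kappa}\ne0$ lets me apply Corollary~\ref{cor:all equal dist}(2), by which both $\alpha$ and $\overline{\alpha}$ are roots of $B_{n,\kappa}(y)$; since $B_{n,\kappa}(0)\ne0$, the value $0$ is not a root, so $\alpha\ne0$ and $\overline{\alpha}\ne0$.

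For the first assertion of part (3) I would argue by contradiction. Suppose $\alpha=-\beta$. Substituting into $f_{\kappa}(\beta,\alpha)=\alpha^2-\beta^2\alpha+2\beta^2-\kappa=0$ gives $\beta^3+3\beta^2-\kappa=0$, so together with $A_n(\beta/2)=0$ the element $\beta$ is a common root of $A_n(x/2)$ and $x^3+3x^2-\kappa$; this forces $\lambda_{n,\kappa}=0$, contrary to hypothesis. The case $\overline{\alpha}=-\beta$ is identical once one notes that $\overline{\alpha}$ also satisfies $f_{\kappa}(\beta,\overline{\alpha})=0$, by the identity $f_{\kappa}(x,y)=f_{\kappa}(x,x^2-y)$ recorded in the proof of Corollary~\ref{cor:all equal dist}.

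The heart of the lemma is the disjointness statement, which I would prove by comparing unordered coordinate multisets. Each of $X_1,X_2,X_3,Y_1,Y_2,Y_3$ has coordinate multiset $\{s,\beta,\beta\}$ for a special value $s\in\{\alpha,\overline{\alpha}\}$. A nontrivial $\sigma\in\Sigma$ fixes one coordinate and negates the other two, so applying it to a triple $W$ with special value $s'$ in position $q$ produces the multiset $\{s',-\beta,-\beta\}$ if $q$ is the fixed position and $\{-s',\beta,-\beta\}$ otherwise. An equality $X_i=\sigma(W)$ or $Y_j=\sigma(W)$ would in particular equate multisets. In the first case $\{s,\beta,\beta\}=\{s',-\beta,-\beta\}$ forces $\beta$ to occur twice on the right, whence $\beta=0$; in the second, cancelling a common $\beta$ leaves $\{s,\beta\}=\{-s',-\beta\}$, which forces either $\beta=0$ or a special value equal to $-\beta$. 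All of these are excluded by parts (1) and (3), so no $X_i$ or $Y_j$ coincides with any $\sigma(X_k)$ or $\sigma(Y_l)$. This is exactly what is needed to make $K_{(\alpha,\beta)}$ and its images under $\Sigma$ pairwise vertex-disjoint, since $\sigma_i(K)\cap\sigma_j(K)=\emptyset$ reduces, via the involution $\sigma_i$, to the same statement for the nontrivial element $\sigma_i\sigma_j$.

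I expect the only real obstacle to be keeping the disjointness case analysis from ballooning into the nine configurations of the three relevant positions; the multiset reformulation is precisely the device that collapses them into the two displayed comparisons, after which the only obstructions are the values $0$ and $-\beta$, both of which have already been ruled out.
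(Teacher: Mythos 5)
Your proposal is correct and follows essentially the same route as the paper: $\beta\ne 0$ and $\overline{\alpha}+\alpha=\beta^2$ for (1), the pairing of roots of $B_{n,\kappa}$ from Corollary~\ref{cor:all equal dist}(2) for (2), and the resultant $\lambda_{n,\kappa}$ via $f_{\kappa}(\beta,\pm\beta... )$ giving $\beta^3+3\beta^2-\kappa=0$ for the first half of (3). The paper dismisses the final disjointness assertion with ``one easily checks''; your coordinate-multiset comparison is a valid and clean way to carry out exactly that check, reducing everything to the exclusions $\beta\ne 0$, $\alpha,\overline{\alpha}\ne -\beta$ already established.
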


\begin{proof}
(1) Since $\beta$ must satisfy $A_{n}(\beta/2)=0$, 
it follows from Lemma~\ref{lem:Am properties} (2) that $\beta\ne 0$, $\overline{\alpha}=\beta^2-\alpha$ never equals $-\alpha$.
(2) Since $\alpha$ must satisfy 
$B_{n,\kappa}(\alpha)=B_{n,\kappa}(\overline{\alpha})=0$ by Corollary~\ref{cor:all equal dist} (2), 
we have $\alpha\ne 0$ and $\overline{\alpha}\ne 0$ from our assumption.
(3) Similar to the argument in (2), neither
$\alpha=-\beta$ nor
$\overline{\alpha}=-\beta$ can occur 
because both imply 
$\beta^3+3\beta^2-\kappa=0$.
Using this observation together with (1), 
one easily checks the rest of the assertion.

\end{proof}


Notice that
\[
B_{n,\kappa}(0)
=
\begin{cases}
 -\kappa+2 & \text{$n=1$}, \\
 \kappa^2-6\kappa+4   & \text{$n=2$},
\end{cases}
\qquad 
\lambda_{n,\kappa}
=
\begin{cases}
 -\kappa+2  & \text{$n=1$}, \\
 \kappa^2-5\kappa+5   & \text{$n=2$}.
\end{cases}
\]


\begin{proof}[Proof of Theorem~\ref{thm-disjoint-k}]
Consider $K_{(\alpha,\beta)} \in\Knkpprop$ 
as constructed in Theorems~\ref{thm-n=1} ($n=1$) and \ref{thm:n=2 non-planarity} ($n=2$),
which is guaranteed to be a $K_{3,3}$-subdivision for infinitely many primes $p$.
Indeed, the Chebotarev density theorem implies that the natural density
of such primes $p$ is at least $1/2$ by Theorem~\ref{thm-n=1} and \eqref{for:Chebotarev}.
The only exception is the case
$\kappa = 2$ (see Remark~\ref{rem-k=2}),
where the density is 
at least $1/4$ according to Theorem~\ref{thm:n=2 non-planarity}~(B)
with $\eta_{2,2} = -31$ and \eqref{for:Chebotarev}.
We show that, except for finitely many primes $p$,
$K_{(\alpha, \beta)}$ and $\sigma(K_{(\alpha, \beta)})$ are vertex-disjoint for each double sign change $\sigma$.
This immediately implies that 
$\sigma_1(K_{(\alpha, \beta)})$ and $\sigma_2(K_{(\alpha, \beta)})$ are vertex-disjoint for any distinct double sign changes $\sigma_1$ and $\sigma_2$.
To establish this, we employ Lemma~\ref{lem-endvertex} with $n=1,2$;
under the assumption $\kappa\ne 4$, 
only finitely many primes $p$ fail to satisfy the assumptions of Lemma~\ref{lem-endvertex}, 
and we simply exclude them.


For the sake of contradiction, 
suppose that there exists a vertex $v$ which belongs to both of $K_{(\alpha, \beta)}$ and $\sigma(K_{(\alpha, \beta)})$.
If $v$ lies on the $X_i$-$Y_j$-path for some $1 \leq i\ne  j \leq 3$ then, by Lemma~\ref{lem-double-sign-k}, 
$v$ must lie on either the $\sigma(X_i)$-$\sigma(Y_j)$-path or the $\sigma(X_j)$-$\sigma(Y_i)$-path in $\sigma(K_{(\alpha,\beta)})$, 
where both are defined by the same way as the paths in $K_{(\alpha, \beta)}$.
Since $X_i$ and $Y_j$, as well as $\sigma(X_i)$ and $\sigma(Y_j)$, are connected by the path corresponding to the alternative word consisting of $R_i$ and $R_j$, 
it follows from Lemma~\ref{lem-endvertex} (2) that either $X_i$ or $Y_j$ must also lie on the $\sigma(X_i)$-$\sigma(Y_j)$ or the $\sigma(X_j)$-$\sigma(Y_i)$-path as well.
We shall focus on the former case, as the latter can be treated similarly.
For $n=1$, there are two possibilities, that is,
$R_j(\sigma(X_i))=X_i$ or $R_j(\sigma(X_i))=Y_j$, but both are easily negated by Lemma~\ref{lem-endvertex} (1) and (2).
For $n=2$, 
we have the following six possibilities.
\begin{enumerate}
\item[(1-X)] $R_j(\sigma(X_i))=X_i$,
\item[(1-Y)] $R_j(\sigma(X_i))=Y_j$, 
\item[(2-X)] $(R_iR_j)(\sigma(X_i))=X_i$, equivalently, $R_j(\sigma(X_i))=Y_i$,
\item[(2-Y)] $(R_iR_j)(\sigma(X_i))=Y_j$, equivalently, $R_i(\sigma(Y_j))=X_j$,
\item[(3-X)] $(R_jR_iR_j)(\sigma(X_i))=X_i$, equivalently, $R_i(\sigma(Y_j))=X_i$, 
\item[(3-Y)] $(R_jR_iR_j)(\sigma(X_i))=Y_j$, equivalently, $R_i(\sigma(Y_j))=Y_j$.
\end{enumerate}
Here, in establishing the equivalences in (2-Y), (3-X) and (3-Y), 
we have used Lemma~\ref{lem-double-sign-k} 
together with the relation $(R_iR_j)^2(X_i)=Y_j$.
Cases (1-X) and (1-Y) do not hold for the same reason as in the case $n=1$.
Cases (2-X), (2-Y) and (3-Y) lead to contradictions with Lemma~\ref{lem-endvertex} (1) and (2). 
Finally, (3-X) is negated by Lemma~\ref{lem-endvertex} (3)
since it is equivalent to $\sigma(Y_j)=Y_i$. 

The remaining assertions concerning the natural densities of $p$ and $\kappa$ 
follow immediately from the same arguments used in the proof of Theorem~\ref{thm:np density}.
Actually, the desired density for the primes $p$ under consideration 
is at least
\begin{align*}
    Q(\eta_{1,\kappa};1)+Q(\eta_{2,\kappa},5;-1,1)
    -Q(\eta_{1,\kappa},\eta_{2,\kappa},5;1,-1,1) =\frac{5}{8}.
\end{align*}

\end{proof}



\begin{remark}
The four mutually vertex-disjoint $K_{3,3}$-subdivisions above are not necessarily contained in the giant component $\mathcal{C}_{\kappa}(p)$ in general.
Indeed, for $\kappa=2$ and $3$, then 
the $K_{3,3}$-subdivision $K\in \mathcal{K}^{\mathrm{prop}}_{1,\kappa}(p)$ 
obtained from Theorem~\ref{thm-n=1} (along with its three images under double sign changes) is contained in $\mathcal{S}_{2}(p)$ and $\mathcal{S}_{3}(p)$, respectively, where $\mathcal{S}_{2}(p)$ is a unique $16$-vertex component in  Remark~\ref{rem-k=2} and $\mathcal{S}_{3}(p)$ denotes a unique $72$-vertex component in $\mathcal{G}_3(p)$ corresponds to the orbit of $((1+\sqrt{5})/2, 1, 1) \in \mathcal{M}_{3}(p)$ 
under the group generated by Vieta involutions, 
provided that $(\frac{5}{p})=1$ 
(see Figure~\ref{fig:k3smallcomponent}).
On the other hand, according to a recent preprint \cite{Mart2025}, 
for $\kappa\in \mathbb{Z} \setminus \{2, 3, 4\}$, 
all the constructed $K_{3,3}$-subdivisions are expected to be contained within 
$\mathcal{C}_\kappa(p)$ of $\Gkp$. 
Furthermore, even in the cases $\kappa=2$ and $3$, 
the $K_{3,3}$-subdivision with 24 vertices constructed in Theorem~\ref{thm:n=2 non-planarity} $(n=2)$ must be contained in 
$\mathcal{C}_\kappa(p)$.
This follows from the proof of Theorem~\ref{thm-disjoint-k}, 
together with a simple observation that for $\kappa=2$ and $3$, 
any component other than $\mathcal{C}_\kappa(p)$ has at most $72$ vertices
and hence lacks sufficient space to accommodate all four mutually vertex-disjoint copies of the subdivision.
\begin{figure}[H]
\begin{center}
\includegraphics[width=0.9\linewidth, height=0.2
\textheight, keepaspectratio]{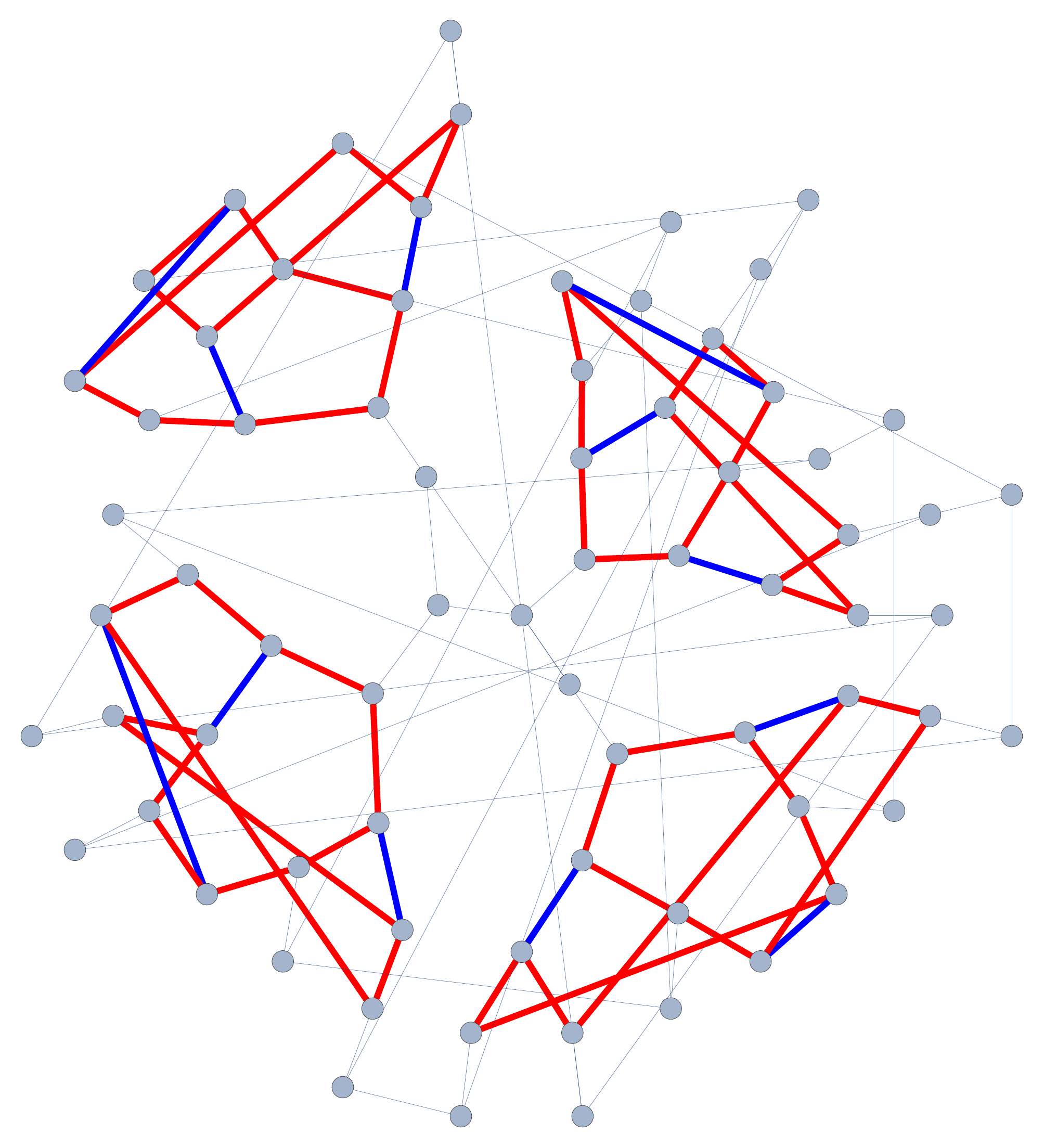}
\caption{The $72$-vertex component $\mathcal{S}_3(p)$ of  $\mathcal{G}_3(p)$ containing four mutually vertex-disjoint $K_{3,3}$-subdivisions as established in Theorem~\ref{thm-disjoint-k}}
\label{fig:k3smallcomponent}
\end{center}
\end{figure}
\end{remark}



\section{The embeddability of $\Gzp$}
\label{sect:embedded}

In this section, we discuss the embeddability of the Markoff mod $p$ graph $\Gzp$.
We say a graph $G$ is {\it toroidal} if it can be embedded into the torus.
Similarly, $G$ is said to be {\it projective-planar} if it admits an embedding into the the projective plane.

\begin{theorem}
\label{thm-proj}
Let $p>3$ be a prime. Then we have the followings.
\begin{enumerate}
\item[$(1)$] The graph $\Gzp$ is toroidal if and only if $p=7$. 
\item[$(2)$] The graph $\Gzp$ is projective-planar if and only if $p=7$. 
\end{enumerate}
\end{theorem}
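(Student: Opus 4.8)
The plan is to separate the two implications. The "if" direction is immediate: since $\mathcal{G}(7)$ is planar it embeds in the sphere, hence in every closed surface, so it is both toroidal and projective-planar. All the work lies in the "only if" direction, where for each prime $p>3$ with $p\neq 7$ I must rule out both a toroidal and a projective-planar embedding of $\Gzp$.

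The mechanism I will use is a pair of forbidden-configuration statements expressed through mutually vertex-disjoint non-planar subgraphs. For the projective plane I will exploit that any two non-contractible simple closed curves in the real projective plane $\mathbb{RP}^2$ meet (their mod-$2$ intersection number equals $1$); since in any projective-planar embedding a non-planar subgraph must carry a non-contractible cycle, no graph containing two vertex-disjoint $K_{3,3}$-subdivisions can be projective-planar. For the torus I will instead establish superadditivity of the Euler genus over vertex-disjoint subgraphs: choosing disjoint regular neighborhoods of the subgraphs and cutting the ambient surface along their boundary circles shows that $k$ pairwise vertex-disjoint non-planar subgraphs force Euler genus at least $k$, so three of them already exceed the Euler genus $2$ of the torus. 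Consequently it suffices to produce two (respectively three) mutually vertex-disjoint $K_{3,3}$-subdivisions in $\Gzp$.

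For every prime in the range covered by Theorem~\ref{thm-disjoint-k} this is already available, since that theorem furnishes four mutually vertex-disjoint $K_{3,3}$-subdivisions --- namely a proper configuration $K_{(\alpha,\beta)}$ together with its images under the free group $\Sigma$ of double sign changes --- which is more than enough for both obstructions via Corollary~\ref{cor:sufficient condition non-planarity} and the genus bound above. To reach the remaining primes I will combine this with the global input $\delta_0=1$, which guarantees that $\Gzp$ is non-planar, and hence contains at least one $K_{3,3}$-subdivision, for every $p\neq 7$. The idea is then to promote a single subdivision to a disjoint family by symmetry: a subdivision of sextuple shape is, by Theorem~\ref{thm:solutions}, some $K_{(\alpha,\beta)}$ with $(\alpha,\beta)\in T^{\mathrm{dist}}_{n,0}(p)$ for a suitable $n$, and Lemma~\ref{lem-endvertex} then forces $K_{(\alpha,\beta)}$ and its $\Sigma$-translates to be pairwise vertex-disjoint as soon as the relevant resultants do not vanish. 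The finitely many small primes $7<p<10^{6}$ that slip through these conditions will be settled by the verified connectivity of $\Gzp$ in \cite{B2025}, which turns the existence of three disjoint subdivisions into a finite, directly checkable computation.

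The hardest part will be handling every prime $p\neq 7$ uniformly rather than a set of positive density. The explicit constructions of Theorems~\ref{thm-n=1}, \ref{thm:n=2 non-planarity} and \ref{thm-n=(p-1)/2} are governed by Legendre-symbol conditions, and a positive-density set of primes satisfies none of them; for such $p$ I know only that some $K_{3,3}$-subdivision exists, with no a priori control on its shape. The crux is thus to show that for each such $p$ one can still choose an index $n$ with $T^{\mathrm{dist}}_{n,0}(p)\neq\emptyset$ giving a proper $K_{(\alpha,\beta)}$, and that properness (which can fail for large $n$, cf. the Remark following Proposition~\ref{porp:Kd=Kdd for n=1,2,3}) and the disjointness hypotheses of Lemma~\ref{lem-endvertex} persist. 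I expect this to rest on a careful analysis of when $A_n(x/2)$ acquires roots in $\F_p$ and how they pair with $f_0$; once that is in hand, the Euler-genus superadditivity closes out the toroidal and projective-planar cases at one stroke.
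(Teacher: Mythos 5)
Your overall architecture (forbidden configurations built from vertex-disjoint $K_{3,3}$-subdivisions, plus the trivial ``if'' direction) matches part of the paper's endgame, but the proposal has a genuine gap at its core: you have no mechanism for handling \emph{every} prime $p\ne 7$. The explicit sextuple constructions (Theorems~\ref{thm-n=1}, \ref{thm:n=2 non-planarity}, \ref{thm-n=(p-1)/2} and Theorem~\ref{thm-disjoint-k}) are governed by Legendre-symbol conditions and cover only a positive-density proper subset of primes; for the complementary positive-density set, the fact $\delta_0=1$ comes from de Courcy-Ireland's asymptotic Euler-characteristic estimate, which certifies \emph{some} $K_{3,3}$-subdivision but gives no reason for it to be of sextuple shape $K_{(\alpha,\beta)}$, so Theorem~\ref{thm:solutions} and Lemma~\ref{lem-endvertex} cannot be invoked to produce $\Sigma$-translates. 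Your hope that one can always find $n$ with $T^{\mathrm{dist}}_{n,0}(p)\ne\emptyset$ together with properness is exactly the unresolved crux you flag, and it is not a small technicality: the tables show $T^{\mathrm{dist}}_{n,0}(p)=\emptyset$ for many $(n,p)$, and properness genuinely fails for large $n$. The paper sidesteps all of this with an ingredient absent from your plan: the quantitative Euler-characteristic inequality \eqref{eq-Euler2} (faces bounded via counts of squares and hexagons) combined with the totient lower bound \eqref{eq-totient} on the giant component forces $\chi\le$ (a negative quantity) unless $p\le 62440$, and the verified connectivity for $p<10^6$ then pins the surviving primes down to $p\in\{5,7,11,13,17\}$ (toroidal) and $\{7,11,13,17\}$ (projective-planar). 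Only these finitely many cases are then treated by explicit obstructions.

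A second, unfixable-by-your-method point is $p=5$: the paper observes that $\mathcal{G}(5)$ appears to contain no two vertex-disjoint $K_{3,3}$-subdivisions at all, and instead exhibits a subdivision of a \emph{different} forbidden minor for the torus and the projective plane. So even after reducing to finitely many primes, a proof resting solely on disjoint $K_{3,3}$-subdivisions cannot close the case $p=5$. (Minor remarks: two, not three, disjoint $K_{3,3}$-subdivisions already obstruct the torus, since $2K_{3,3}$ is a known torus obstruction --- orientable genus is additive over components and monotone under subgraphs --- so your Euler-genus superadditivity detour is both slightly off, as Euler genus $2$ does not by itself exceed that of the torus, and unnecessary; and $\Sigma$ is a Klein four-group, not a free group.)
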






\begin{proof}
We provide the proof for (1), 
as (2) follows from an analogous argument.
To this end we show that if $\Gzp$ is toroidal then $5\leq p\leq 17$.
Let $\chi$ denote the Euler characteristic of $\Gzp$, and $V$ be the number of vertices in the giant component of $\Gzp$.
Then by the same discussion to obtain (3.4) in \cite{C2024}, we have
\begin{align}
\label{eq-Euler}
    V \leq 15 \biggl(p-4-\Bigl(\frac{-1}{p}\Bigr)\biggr)+6s+2h-14\chi
\end{align}
where $s$ and $h$ denote the numbers of squares and hexagons in $\Gzp$, respectively.
Hence, the counting result for $s$ and $h$ (\cite[Lemma 2.1]{C2024}) 
combined with the inequality \eqref{eq-Euler} yield 
\begin{align}
\label{eq-Euler2}
    V \leq 
\left\{
\begin{array}{ll}
26p-90-14\chi & \text{if $p\equiv 1 \pmod{12}$},\\
26p-82-14\chi & \text{if $p\equiv 5 \pmod{12}$},\\
17p-51-14\chi & \text{if $p\equiv 7 \pmod{12}$},\\
17p-43-14\chi & \text{if $p\equiv 11 \pmod{12}$}.
\end{array}
\right.
\end{align}
Indeed, it was proved in \cite[Sections 7 and 8]{C2024} that 
\begin{align}
\label{eq-totient}
V\geq \frac{1}{2}p\phi(p+1) \geq \frac{p(p+1)}{1000\log\log (p+1)}  
\end{align}
for any $p>3$, where $\phi$ denotes the Euler totient function.
If $\Gzp$ admits an embedding into the torus, 
then we have $\chi=0$ by the Euler formula.
Combining \eqref{eq-Euler2} and \eqref{eq-totient},
it follows that $\Gzp$ can be embedded into the torus 
only if $p\leq 62440$, which can be verified by the Newton method.
In \cite{B2025}, the connectivity of $\Gzp$ is verified for $p< 10^6$, which implies that 
$\Gzp$ is connected for all $p\leq 62440$. 
Then, by \eqref{eq-number} with $\kappa=0$, 
we have $V=p^2+3(\frac{-1}{p})p$ for $p\leq 62440$, 
and the inequality \eqref{eq-Euler2} shows that 
$\Gzp$ is toroidal only if $p=5, 7, 11, 13, 17$ 
(while $\Gzp$ is non-projective-planar only if $p=7, 11, 13, 17$). 

Note that since $\mathcal{G}(7)$ is planar 
(and, of course, it can be embedded into both the torus and the projective-plane), we shall consider the remaining primes $p=5, 11, 13, 17$. 
First, for $p=11, 13, 17$, we will exhibit subdivisions of $2K_{3,3}$ (i.e. a pair of two vertex-disjoint copies of $K_{3,3}$), which is known to be a forbidden minor for toroidal graphs as well as projective-planar graphs (see, e.g.~\cite{A1981, MW2018}).
In the case $p=11$, $\mathcal{G}(11)$ contains at least four mutually vertex-disjoint $K_{3,3}$-subdivisions by Theorem~\ref{thm-disjoint-k}. 


For $p=13$ and $17$, though $p \equiv 1 \pmod{4}$, 
one can check that the construction in Theorem~\ref{thm-n=(p-1)/2} 
(which is applicable when $p \equiv 1 \pmod{4}$ for $\kappa=0$) 
does not yield a subdivision of $2K_{3,3}$. 
Instead, one can find a $2K_{3,3}$-subdivision in each case, 
as illustrated in Figures~\ref{fig:toroidal-13} and \ref{fig:toroidal-17}.

The remaining case is $p=5$,
for which there seems to be no $2K_{3,3}$-subdivisions. 
Instead, it is possible to find a subgraph 
(as shown in Figure~\ref{fig:toroidal-5-2}, e.g. \cite{MW2018}) 
which is a subdivision of another forbidden minor for toroidal and projective-planar graphs
(see Figure~\ref{fig:toroidal-5-1}).

\begin{figure}[H]
\begin{center}
\includegraphics[width=0.7\linewidth, height=0.3\textheight, keepaspectratio]{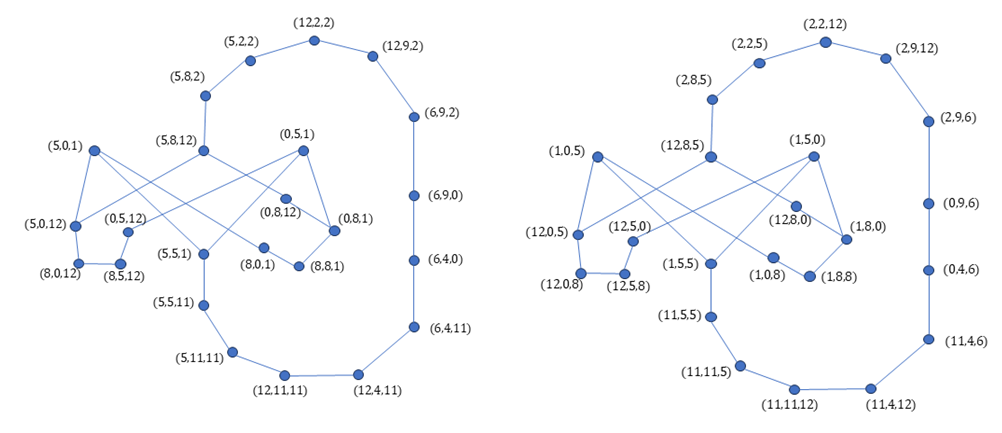}
\caption{A subdivision of $2K_{3,3}$ in $\mathcal{G}(13)$}\label{fig:toroidal-13}
\end{center}
\end{figure}


\begin{figure}[H]
\begin{center}
\includegraphics[width=0.7\linewidth, height=0.3\textheight, keepaspectratio]{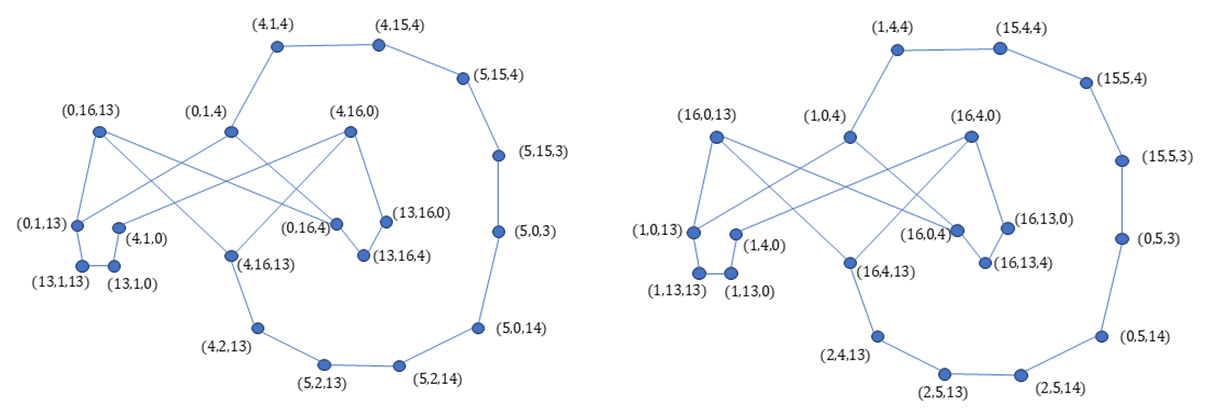}
\caption{A subdivision of $2K_{3,3}$ in $\mathcal{G}(17)$}\label{fig:toroidal-17}
\end{center}
\end{figure}


\begin{figure}[H]
  \centering
  \begin{minipage}[b]{0.4\linewidth}
    \centering
    \includegraphics[width=0.5\linewidth, height=0.45\textheight, keepaspectratio]{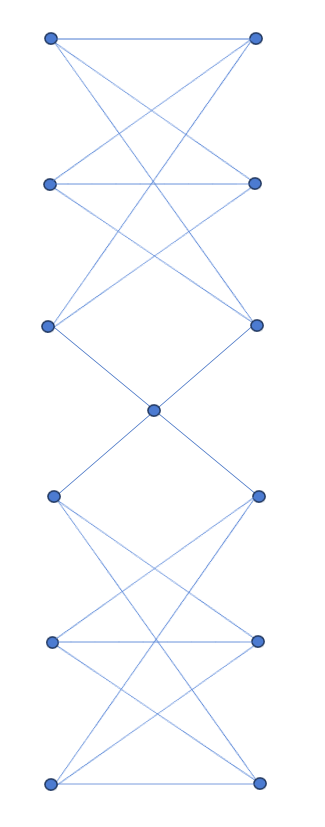}
    \caption{A forbidden minor for toroidal and projective-planar graphs}
    \label{fig:toroidal-5-1}
  \end{minipage}
  \hspace{0.05\linewidth}
  \begin{minipage}[b]{0.4\linewidth}
    \centering
    \includegraphics[width=0.7\linewidth, height=0.45\textheight, keepaspectratio]{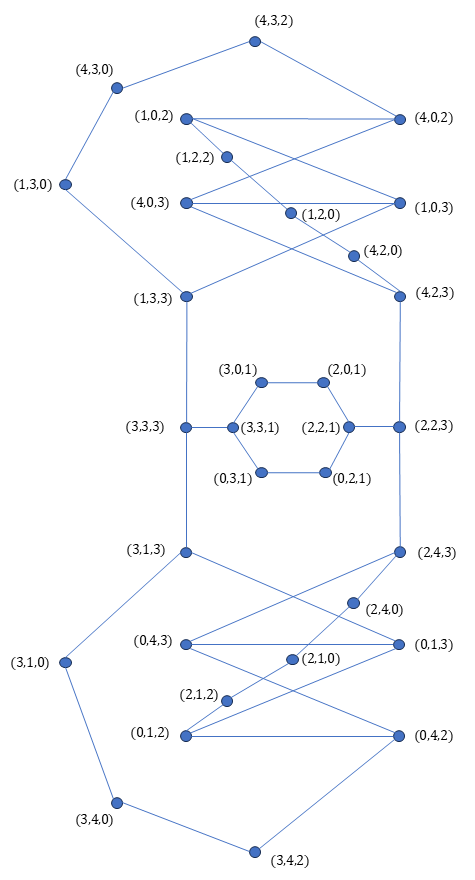}
    \caption{A minor of the left graph in $\mathcal{G}(5)$}
    \label{fig:toroidal-5-2}
  \end{minipage}
\end{figure}
\end{proof}

\section*{Acknowledgement}
The authors would like to thank Yusuke Higuchi, Yasuhiko Ikematsu, Hyungrok Jo, Masato Mimura, Kenta Ozeki, JuAe Song, Tsuyoshi Takagi and Katsuyuki Takashima for their valuable comments related to this work. 
This work was supported by JST CREST Grant Number JPMJCR2113 and JSPS KAKENHI Grant Number JP23K13007, Japan.
This work was also supported by the Institute of Mathematics for Industry, Joint Usage/Research Center in Kyushu University (Short-term Joint Research 2022a017, 2023a017, 2024a028, and 2025a037).

\end{document}